\newtheorem{thm}{Theorem}[section]
 \newtheorem{cor}{Corollary}[section]
 \newtheorem{lem}{Lemma}[section]
 \newtheorem{prop}{Proposition}[section]
 \newtheorem{defn}{Definition}[section]
\newtheorem{rem}{Remark}[section]
\def\Id{{\rm Id}\,}
\def\d{\partial}
\def\ddk{\dot \Delta_k}
\def\tilde{\widetilde}
\newcommand\R{\mathbb{R}}
\newcommand\Z{\mathbb{Z}}
\newcommand{\N}{\mathbb{N}}
\renewcommand{\div}{\mbox{\rm div}\;\!}
\def\cA{{\mathcal A}}
\def\cD{{\mathcal D}}
\def\cP{{\mathcal P}}
\def\cQ{{\mathcal Q}}
\begin{document}
\title[Compressible MHD system]{Large time behavior of strong solutions to the compressible magnetohydrodynamic system in the critical $L^{p}$ framework}

\author{Wei Xuan Shi}
\address{Department of Mathematics,  Nanjing
University of Aeronautics and Astronautics,
Nanjing 211106, P.R.China,}
\email{wxshi168@163.com}

\author{Jiang Xu}
\address{Department of Mathematics,  Nanjing
University of Aeronautics and Astronautics,
Nanjing 211106, P.R.China,}
\email{jiangxu\underline{ }79@nuaa.edu.cn}
\thanks{The second author is partially supported by the National
Natural Science Foundation of China (11471158), the Program for New Century Excellent
Talents in University (NCET-13-0857) and the Fundamental Research Funds for the Central
Universities (NE2015005).  He would like to thank Professor R. Danchin for his kind communication when visiting the LAMA in UPEC}

\subjclass{76W15, 35Q30, 35L65, 35K65}
\keywords{Compressible MHD system; Decay estimates; Critical Besov spaces}

\begin{abstract}
In this paper, we are concerned with the compressible viscous magnetohydrodynamic system (MHD) and investigate the large time behavior of strong solutions
near constant equilibrium (away from vacuum). In the eighties, Umeda, Kawashima \& Shizuta \cite{UKS} initiated the dissipative mechanism for
a rather general class of symmetric hyperbolic-parabolic systems, which is
$$\mathrm{Re}\lambda\left(i\xi\right)\leq-C\eta\left(\xi\right),\ \ \
\eta\left(\xi\right)=\frac{\left|\xi\right|^{2}}{1+\left|\xi\right|^2}\left(\left|\xi\right|\neq0,\ C>0\right).$$
Here $\lambda=\lambda\left(i\xi\right)$ is the characteristic root of linearized equations. From the point of view of dissipativity, Kawashima \cite{KS} in his doctoral dissertation established the optimal time-decay estimates of $L^{q}$-$L^{2}(q\in [1,2)$) type for solutions to the MHD system. Here, by using Fourier analysis techniques, we shall improve Kawashima's efforts in \cite{KS} and give more precise description for the large-time asymptotic behavior of solutions, not only in extra Lebesgue spaces but also in a full family of Besov norms with the negative regularity index. Precisely, we show that the $L^{p}$ norm (the slightly stronger $\dot B^{0}_{p,1}$ norm in fact) of global solutions with the critical regularity, decays like $t^{-d\left(\frac{1}{p}-\frac{1}{4}\right)}$ for $t\rightarrow\infty$. In particular, taking $p=2$ and $d=3$ goes back to the classical time decay $t^{-\frac{3}{4}}$. We derive new estimates which are used to deal with the strong coupling between the magnetic field and fluid dynamics.
\end{abstract}

\maketitle
\section{Introduction}
The present paper is devoted to the equations of magnetohydrodynamics  which describe the motion of electrically conducting
fluids in the presence of a magnetic field. It involves various topics such as the evolution and dynamics of astrophysical objects, thermonuclear fusion, metallurgy and semiconductor crystal growth, see for example \cite{CH,LL}. Precisely, the barotropic compressible viscous MHD system reads as
\begin{equation} \label{Eq:1-1}
\left\{
\begin{array}{l}
\partial _{t}\varrho +\mathrm{div}\,\left( \varrho u\right) =0, \\
\partial _{t}( \varrho u) +\mathrm{div}\,\left( \varrho u\otimes u\right) +\nabla P(\varrho)\\
\hspace{20mm}=B\cdot\nabla B-\frac{1}{2}\nabla(|B|^2)
+\mathrm{div}\left( 2\mu\, D(u) +\lambda\,\mathrm{div}\,u\, \mathrm{Id}\right), \\
\partial _{t}B+(\div u)B+u\cdot\nabla B-B\cdot\nabla u=\nu\Delta B, \ \ \mathrm{div}\,B=0
\end{array}
\right.
\end{equation}
for $(t,x)\in \mathbb{R}_{+}\times \mathbb{R}^{d}(d\geq2)$. Here, $\varrho =\varrho (t,x)\in \R_{+}$ denotes the density,
$u=\left(u_{1},u_{2},\cdots, u_{d}\right)\in \mathbb{R}^{d}$ is
 the velocity field, and $B=\left(B_{1},B_{2},\cdots,B_{d}\right)\in \mathbb{R}^{d}$ stands for the
magnetic field. The barotropic assumption means that the pressure $P$ is given suitably smooth function of $\varrho$.
 The notation  $D(u)\triangleq\frac{1}{2}(D_{x} u+\left(D_{x} u\right)^{T})$ stands for  the {\it deformation tensor}, and $\div$
 is the divergence operator with respect to the space variable. The  density-dependent functions $\lambda$ and $\mu$  (the  \emph{bulk} and \emph{shear viscosities}) are supposed to be smooth enough and to satisfy $\mu >0$ and $\lambda +2\mu >0$. The
constant $\nu >0$ is the resistivity acting as the magnetic diffusion coefficient of the magnetic field.

System \eqref{Eq:1-1} is supplemented with the initial data
\begin{equation}\label{Eq:1-2}
\left(\varrho,u,B\right)|_{t=0}=\left(\varrho _{0}(x),u_{0}(x),B_{0}(x)\right),\  x\in \R^{d},
\end{equation}
and we focus on solutions that are close to some constant state $\left(\varrho _{\infty}
,0,B_{\infty}\right)$ with $\varrho _{\infty} >0$ and the nonzero vector $B_{\infty}=\left(B^{1}_{\infty},B^{2}_{\infty},\cdots,B^{d}_{\infty}\right)\in \mathbb{R}^{d}$, at infinity.

Due to the physical importance and mathematical challenges, there are lots of works on the existence and uniqueness of (weak, strong or smooth) solutions, see \cite{GMWZ, HT, HW1, HW2, HW3, JJL, KS, LYP, SH} and references therein. Umeda, Kawashima \& Shizuta \cite{UKS} first considered a rather general class of symmetric hyperbolic-parabolic systems in the 80ies. By exploiting an energy method in Fourier spaces, they found the system is
uniformly dissipative in the following sense
$$\mathrm{Re}\lambda\left(i\xi\right)\leq-C\eta(\xi),\ \ \ \eta\left(\xi\right)=\frac{\left|\xi\right|^{2}}{1+\left|\xi\right|^2}\ \ \left(\left|\xi\right|\neq0, \ C>0\right),$$
where $\lambda=\lambda\left(i\xi\right)$ is the characteristic root of linearized equations. Consequently, the dissipative mechanism indicates the optimal decay rates are just the same as that of heat kernel. As direct applications, they showed such decay property of solutions to the linearized form of \eqref{Eq:1-1}-\eqref{Eq:1-2} (near the equilibrium state $\left(\varrho _{\infty},0,B_{\infty}\right)$). Subsequently, Kawashima \cite{KS} proved the global existence of smooth solutions to \eqref{Eq:1-1}-\eqref{Eq:1-2} by assuming the initial data are small in $H^3\left(\mathbb{R}^{3}\right)$. Moreover, he obtained the following fundamental $L^q$-$L^2$ decay estimate in $H^{4}\left(\mathbb{R}^{3}\right)\cap L^{q}\left(\mathbb{R}^{3}\right)$ $\left(q\in[1,2)\right)$:
\begin{equation} \label{Eq:1-3}
\left\|\left(\varrho-\varrho_{\infty},u,B-B_{\infty}\right)(t)\right\|_{L^{2}(\mathbb{R}^{3})}\leq C \langle t\rangle^{-\frac32\left(\frac{1}{q}-\frac{1}{2}\right)}
\ \ \hbox{with} \ \ \langle t\rangle\triangleq\sqrt{1+t^{2}}.
\end{equation}
Shortly after Kawashima, still for data with high Sobolev regularity, there are a number of works on that topic, see for example
\cite{CT,GCY,LY,TW} and references therein. Hu and Wang \cite{HW1} established the existence and large-time behavior of global weak solutions with large data in
a bounded domain. The global existence of large-weak solutions was also shown for full compressible magnetohydrodynamic flows where the
viscosity coefficients and heat conductivity could depend on the temperature, see \cite{HW2}. Suen and Hoff \cite{SH} obtained the global low-energy weak solutions where initial data are chosen to be small and initial densities are assumed to be nonnegative and essentially bounded. Hoff and Tsyganov \cite{HT} studied the uniqueness and continuous dependence of weak solutions in compressible magnetohydrodynamics. Hu and Wang \cite{HW3} investigated the low-Mach-limit problem for isentropic MHD system in the framework of weak solutions, and Jiang et al. \cite{JJL} obtained the low-Mach-limit for smooth solutions. The definite convergence order was shown by Li in \cite{LYP}. The existence and stability of non-characteristic boundary layers for the compressible MHD equations were proved by Gu\`{e}s et al. \cite{GMWZ}.

On the other hand, notice that $B\equiv0$, system \eqref{Eq:1-1} reduces to the usual compressible Navier-Stokes system for baratropic fluids. Inspired by
those efforts on the incompressible Navier-Stokes equations \cite{CM,FK, KY}, Danchin \cite{DR1} performed the scaling invariance and established
the global existence of strong solutions when the initial data are small enough in $L^{2}$ critical homogeneous Besov space. Later, Charve \& Danchin \cite{CD},
Chen, Miao \& Zhang \cite{CMZ} independently extended that result to
more general $L^{p}$ critical Besov spaces. In \cite{HB}, Haspot employed $L^{p}$ energy methods which is based on the use of Hoff's viscous effective flux in \cite{HD}, and got the essentially same results as in \cite{CD,CMZ}. Recently, Danchin and the second author \cite{DX} proved the large-time behavior of strong solutions in the $L^{p}$ critical regularity framework. To the best of our knowledge, there are few results (except for \cite{HCH}) for the magnetohydrodynamic system from the point of view of scaling invariance. In fact, observe that system \eqref{Eq:1-1} is obviously invariant for all $l>0$ by the
following transformation
\begin{equation*}
\varrho ( t,x) \rightsquigarrow \varrho \left( l^{2}t,l x\right) ,\ \ u(t,x) \rightsquigarrow l u\left( l^{2}t,l x\right) ,\ \ B(t,x) \rightsquigarrow l B\left( l ^{2}t,l x\right),
\end{equation*}
up to a change of the pressure law $P$ into $l^{2}P$. Inspired by the scaling property, Hao \cite{HCH} achieved the global existence of strong solutions to \eqref{Eq:1-1}-\eqref{Eq:1-2} in $L^{2}$ critical Besov spaces by using the compactness arguments. Recently,
the authors in another paper studied the coupling effect arising from the magnetic field and established the global strong solutions in
more general $L^{p}$ framework. Precisely, one has
\begin{thm}\label{Thm1.1}(\cite{SX})
 Let $d\geq2$ and $p$ fulfill
\begin{equation}\label{Eq:1-4}
2\leq p\leq \min\left(4,2d/(d-2)\right) \ and, \ additionally, \ p\neq 4 \ if \ d=2.
\end{equation}
Suppose that $\div B_0=0$, $P^{\prime }\left(\varrho_{\infty} \right)>0$ and that \eqref{Eq:1-2} is satisfied. There exists a small positive constant $c=c\left( p,\mu ,\lambda,\nu ,P,\varrho_{\infty},B_{\infty}\right) $ and a universal integer $k_{0}\in
\mathbb{N}$ such that if $a_{0}\triangleq \varrho_{0}-\varrho_{\infty}$ is in $\dot{B}_{p,1}^{\frac {d}{p}}$, if $u_{0}$ is in $\dot{B}_{p,1}^{\frac {d}{p}-1}$, if $H_{0}\triangleq B_{0}-B_{\infty}$ is in $\dot{B}_{p,1}^{\frac {d}{p}-1}$ and if in addition $\left(a_{0}^{\ell} ,u_{0}^{\ell}
,H_{0}^{\ell}\right)$ in $\dot{B}_{2,1}^{\frac {d}{2}-1}$ \emph{(}with the notation $z^{\ell} =%
\dot{S}_{k_{0}+1}z$ and $z^{h}=z-z^{\ell} $\emph{)} with
\begin{equation*}
\mathcal{E}_{p,0}\triangleq \left\|\left( a_{0},u_{0},H_{0}\right) \right\|_{\dot{B}_{2,1}^{\frac {d}{2}-1}}^{\ell} +\left\|a_{0}\right\|_{\dot{B}_{p,1}^{\frac
{d}{p}}}^{h}+\left\|\left(u_{0},H_{0}\right)\right\|_{\dot{B}_{p,1}^{\frac {d}{p}-1}}^{h}\leq c,
\end{equation*}
then the Cauchy problem \eqref{Eq:1-1}-\eqref{Eq:1-2} admit a unique global-in-time solution $\left(\varrho,u,B\right)$ with $\varrho=\varrho_{\infty}+a$, $B=B_{\infty}+H$ and $\left(a,u,H\right)$
in the space $X_{p}$ defined by:
\begin{eqnarray*}
&&\hspace{-6mm}\left(a,u,H\right)^{\ell} \in \widetilde{\mathcal{C}_{b}}\left(\mathbb{R_{+}};\dot{B}_{2,1}^{\frac {d}{2}-1}\right)\cap L^{1}\left(\mathbb{R_{+}};\dot{B}_{2,1}^{\frac {d}{2}+1}\right),\, a^{h}\in \widetilde{\mathcal{C}_{b}}\left(\mathbb{R_{+}};\dot{B}_{p,1}^{\frac{d}{p}}\right)\cap L^{1}\left(\mathbb{R_{+}};\dot{B}_{p,1}^{\frac {d}{p}}\right),\\
&&\hspace{-6mm}\left(u,H\right)^{h}\in \widetilde{\mathcal{C}_{b}}\left(\mathbb{R_{+}};\dot{B}_{p,1}^{\frac {d}{p}-1}\right)\cap L^{1}\left(\mathbb{R_{+}};\dot{B}_{p,1}^{\frac {d}{p}+1}\right),
\end{eqnarray*}
where we agree that $\tilde{\mathcal{C}}_{b}\left(\mathbb{R}_{+};\dot{B}^{s}_{q,1}\right)\triangleq\mathcal{C}\left(\mathbb{R}_{+};\dot{B}^{s}_{q,1}\right)
\cap\tilde{L}^{\infty}\left(\mathbb{R}_{+};\dot{B}^{s}_{q,1}\right)$, $s\in\mathbb{R}$, $1\leq q \leq \infty$.

Furthermore, we get for some constant $C=C\left( p,\mu ,\lambda,\nu ,P,\varrho_{\infty},B_{\infty}\right)$,
\begin{equation*}
\mathcal{E}_{p}(t)\leq C\mathcal{E}_{p,0},
\end{equation*}
for any $t>0$, where
\begin{eqnarray}
&&\mathcal{E}_{p}(t)\triangleq\left\| (a,u,H)\right\|_{\widetilde{L}^{\infty} (\dot{B}
_{2,1}^{\frac {d}{2}-1})}^{\ell}+\left\| (a,u,H)\right\| _{L^{1}(\dot{B}
_{2,1}^{\frac {d}{2}+1})}^{\ell}+\left\|a\right\|_{\widetilde{L}^{\infty}(\dot{B}_{p,1}^{\frac {d}{p}})}^h+\left\|a\right\|_{L^{1}(\dot{B}_{p,1}^{\frac
{d}{p}})}^{h} \nonumber \\\label{Eq:1-5}
&&\hspace{13.5mm}+\left\|(u,H)\right\|_{\widetilde{L}^{\infty}(\dot{B}_{p,1}^{\frac {d}{p}-1})}^{h}+\left\|(u,H)\right\| _{L^{1}(\dot{B}_{p,1}^{\frac {d}{p}+1})}^{h}.
\end{eqnarray}
\end{thm}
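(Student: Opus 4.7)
The plan is to adapt the hybrid Besov framework developed for the compressible Navier--Stokes system (\cite{CD,CMZ,HB,DR1}) to the MHD setting, keeping track of the extra coupling introduced by the magnetic field. I would first rewrite \eqref{Eq:1-1} in terms of the perturbation unknowns $a=\varrho-\varrho_\infty$, $u$ and $H=B-B_\infty$, so that the system takes the form of a quasilinear perturbation of a linear operator $\mathcal{L}$ acting on $(a,u,H)$. Since $B_\infty\neq0$, the coupling $B\cdot\nabla B$ and $B\cdot\nabla u$ produces linear first-order terms $B_\infty\cdot\nabla H$ and $B_\infty\cdot\nabla u$ that enter the principal part of $\mathcal{L}$; these are precisely what makes the analysis genuinely different from Navier--Stokes.

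Next I would analyze $\mathcal{L}$ in Fourier variables. At low frequencies, the characteristic roots mix parabolic, acoustic and Alfv\'en-wave behavior, so I would diagonalize (or build a suitable Lyapunov functional as in Umeda--Kawashima--Shizuta) and work in the $L^2$-based Besov space $\dot{B}_{2,1}^{d/2-1}$, which is the natural critical regularity for an $L^2$ analysis. At high frequencies, the parabolic dissipation of $u$ and $H$ dominates, and one can exploit Hoff's effective velocity trick (as in \cite{HB,HD}) to decouple the density from the parabolic block; this yields a full gain of two derivatives on $(u,H)$ and one derivative on $a$ in $L^p$-based Besov norms. Working with
\begin{equation*}
z^\ell=\dot{S}_{k_0+1}z,\quad z^h=z-z^\ell
\end{equation*}
and the functional $\mathcal{E}_p(t)$ defined in \eqref{Eq:1-5} is exactly designed to accommodate these two distinct behaviors simultaneously.

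For the nonlinear analysis, I would estimate each quadratic term of \eqref{Eq:1-1} via product and composition laws in Besov spaces, being careful that every term falls into an admissible norm in the hybrid setting. The genuinely new pieces are $B\cdot\nabla B-\tfrac12\nabla|B|^2$ in the momentum equation and $(\mathrm{div}\,u)B+u\cdot\nabla B-B\cdot\nabla u$ in the magnetic equation; these must be split according to whether the derivatives fall on low or on high frequencies, and the low-frequency $L^2$ output must still be controlled by products of a low-frequency and a high-frequency factor. The restriction $p\leq\min(4,2d/(d-2))$ in \eqref{Eq:1-4} enters precisely here, to guarantee that all the required embeddings $\dot{B}_{p,1}^{d/p-1}\hookrightarrow \dot{B}_{2,1}^{d/2-1}$-type inequalities and $\dot{B}_{p,1}^{d/p}\cdot\dot{B}_{p,1}^{d/p-1}\hookrightarrow\dot{B}_{p,1}^{d/p-1}$ hold in low frequencies.

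Finally, combining the linear and nonlinear estimates yields an a priori inequality of the form $\mathcal{E}_p(t)\leq C\mathcal{E}_{p,0}+C\mathcal{E}_p(t)^2$, which closes via a standard bootstrap as soon as $\mathcal{E}_{p,0}$ is small enough. Global existence and uniqueness then follow by a Friedrichs-type scheme (or a local-in-time result combined with continuation), uniqueness being proved in a slightly weaker space along the lines of \cite{DX}. The main obstacle, in my view, is the low-frequency analysis of $\mathcal{L}$: one must extract parabolic decay for $(a,u,H)^\ell$ in spite of the Alfv\'en-type wave coupling caused by $B_\infty\neq0$, and one must verify that the nonlinear contributions involving $H$ are genuinely estimable in the low-frequency $L^2$ norm by a product of a low- and a high-frequency factor, since otherwise the magnetic coupling would spoil the closure of the estimate.
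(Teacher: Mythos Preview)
Your sketch is a reasonable outline of how one would establish such a global existence result in the critical $L^p$ framework, and the ingredients you list (hybrid low/high-frequency decomposition, $L^2$ treatment at low frequencies via a Lyapunov functional of Umeda--Kawashima--Shizuta type, $L^p$ treatment at high frequencies via Haspot's effective velocity, product/composition laws under the constraint \eqref{Eq:1-4}, and a bootstrap closure $\mathcal{E}_p(t)\le C\mathcal{E}_{p,0}+C\mathcal{E}_p(t)^2$) are exactly what one expects from the Navier--Stokes literature \cite{CD,CMZ,HB} adapted to MHD.

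However, there is nothing to compare against here: the present paper does \emph{not} prove Theorem~\ref{Thm1.1}. The theorem is quoted verbatim from the companion paper \cite{SX} (note the citation in the theorem heading and the sentence immediately preceding it), and the current paper takes it entirely for granted, devoting Section~3 exclusively to the decay result Theorem~\ref{Thm1.2}. So your proposal is not a ``blind'' version of the paper's proof; it is a plausible summary of what \cite{SX} presumably contains, but cannot be checked against anything in this manuscript.
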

\begin{rem}\label{Rem1.1}
Condition \eqref{Eq:1-4} allows to the case  $p>d$ for which the
regularity exponent $d/p-1$ of $(u,H)$ becomes negative in physical dimensions $d=2,3$, so our result holds for small data but
large highly oscillating initial velocity and magnetic field. This is an \textit{essential} and \textit{different} point in comparison with $L^2$ results in \cite{HCH}.
On the other hand, the regularities of global solutions are reduced heavily in contrast with all previous works \cite{KS,CT,GCY,LY,TW} and references therein.
\end{rem}

The natural next step is to look for the large-time asymptotic description of the global solution. For that purpose, let's rewrite system \eqref{Eq:1-1} as the nonlinear perturbation form of constant equilibrium state $\left(\varrho_{\infty},0,B_{\infty}\right)$, looking at the nonlinearities as source terms. To
simplify the statement of main results, we assume that $\varrho_{\infty}=1$, $B_{\infty}=I$ ($I$ is an arbitrary nonzero constant vector satisfying $\left|I\right|=1$), $P'(\varrho_{\infty})=1$, $\nu=1$ and $\nu_{\infty}\triangleq2\mu_{\infty}+\lambda_{\infty}=1$ (with $\lambda_{\infty}\triangleq \lambda\left(\varrho_{\infty}\right)$ and $\mu_{\infty}\triangleq \mu\left(\varrho_{\infty}\right)$). Consequently,
in term of the new variables $\left(a,u,H\right)$, system \eqref{Eq:1-1} becomes
\begin{equation} \label{Eq:1-6}
\left\{
\begin{array}{l}
\partial _{t}a+\div u=f, \\
\partial _{t}u-\mathcal{A}u+\nabla a+\nabla\left(I\cdot H\right)-I\cdot\nabla H=g, \\
\partial _{t}H-\Delta H+\left(\div u\right) I-I\cdot\nabla u=m,\\
\div H=0,
\end{array}
\right.
\end{equation}
where
\begin{eqnarray*}
&&f \triangleq -\div \left(au\right), \\
&&g \triangleq -u\cdot \nabla u-\pi_{1}(a)\mathcal{A}u-\pi_{2}(a)\nabla a+\frac {1}{1+a}\mathrm{div}\left(2\widetilde{\mu }(a)\,D(u)+\widetilde{\lambda }(a) \div u \,\Id \right)\\
&&\hspace{7.5mm}+\pi_{1}(a)\left(\nabla\left(I\cdot H\right)-I\cdot\nabla H\right)-\frac {1}{1+a}\left(\frac{1}{2}\nabla|H|^{2}-H\cdot\nabla H\right),\\
&&m\triangleq -H\left(\div u\right)+H\cdot \nabla u-u\cdot \nabla H,
\end{eqnarray*}
with
\begin{center}
$\mathcal{A}\triangleq\mu _{\infty} \Delta +\left(\lambda _{\infty} +\mu _{\infty}\right)
\nabla \div$ such that $2\mu_{\infty}+\lambda _{\infty}=1$ and
$\mu _{\infty} >0$ \\
($\mu _{\infty}\triangleq\mu (1)$ and $\lambda _{\infty} \triangleq\lambda (1)$), \ \ $\pi_{1}(a) \triangleq \frac {a}{1+a}$, \ \
$\pi_{2}( a) \triangleq \frac{P^{\prime}( 1+a)}{1+a}-1$, \\
$\widetilde{\mu }(a)\triangleq \mu (1+a)-\mu (1)$, \ \ $\widetilde{\lambda}(a)\triangleq \lambda(1+a)-\lambda (1)$.
\end{center}
Note that $\pi_{1}, \pi_{2}, \ \widetilde{\mu }$ and $\widetilde{\lambda}$ are smooth functions satisfying
$$\pi_{1}(0)=\pi_{2}(0)=\widetilde{\mu }(0)=\widetilde{\lambda}(0)=0.$$

Motivated by the dissipative analysis due to
Kawashima et al. in \cite{UKS}, we aim at establishing the large-time behavior of strong solutions constructed in Theorem \ref{Thm1.1}. The main result is stated as follows.
\begin{thm}\label{Thm1.2}
Let $d\geq2$ and $p$ satisfy assumption \eqref{Eq:1-4}. Let $\left(\varrho_{0},u_{0},B_{0}\right)$ fulfill the conditions of Theorem \emph{\ref{Thm1.1}}, and denote by $\left(\varrho,u,B\right)$ the corresponding global solution of system \eqref{Eq:1-1}. There exists a positive constant $c=c\left( p,\mu ,\lambda,P,B_{\infty}\right)$ such that if
\begin{equation} \label{Eq:1-7}
\mathcal{D}_{p,0}\triangleq \left\|\left(a_{0},u_{0},H_{0}\right)\right\|^{\ell}_{\dot{B}_{2,\infty}^{-s_{0}}}\leq c \ \ with  \ \ s_{0}\triangleq \frac{2d}{p}-\frac{d}{2},
\end{equation}
then we have for all $t\geq 0$,
\begin{equation}\label{Eq:1-8}
\mathcal{D}_{p}(t)\lesssim \left(\mathcal{D}_{p,0}+\left\|\left(\nabla a_{0},u_{0},H_{0}\right)\right\|^{h}_{\dot{B}_{p,1}^{\frac {d}{p}-1}}\right),
\end{equation}
where the functional $\mathcal{D}_{p}(t)$ is defined by
\begin{eqnarray}
&&\mathcal{D}_{p}(t)\triangleq \sup_{s\in[\varepsilon-s_{0},\frac{d}{2}+1]}
\left\|\langle\tau\rangle^{\frac{s_{0}+s}{2}}\left(a,u,H\right)\right\|^{\ell}_{L^{\infty}_{t}(\dot{B}_{2,1}^{s})}
+\left\|\langle\tau\rangle^{\alpha}\left(\nabla a,u,H\right)\right\|^{h}_{\tilde{L}^{\infty}_{t}(\dot{B}_{p,1}^{\frac {d}{p}-1})} \nonumber\\ \label{Eq:1-9}
&&\hspace{14mm}+\left\|\tau \nabla \left(u,H\right)\right\|^{h}_{\tilde{L}^{\infty}_{t}(\dot{B}_{p,1}^{\frac {d}{p}})}
\end{eqnarray}
with $\alpha \triangleq \frac{d}{p}+\frac{1}{2}-\varepsilon$ for some sufficiently small $\varepsilon>0$.
\end{thm}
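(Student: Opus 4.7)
The plan is to adapt the strategy of Danchin and the second author \cite{DX} for the compressible Navier-Stokes system to the MHD setting \eqref{Eq:1-6}, treating the additional magnetic coupling as source terms that must be estimated carefully. The core of the proof is to combine propagation of a negative low-frequency Besov norm with time-weighted energy estimates at both low and high frequencies, and then close the bootstrap via the smallness of $\mathcal{E}_p(t)$ furnished by Theorem \ref{Thm1.1}.

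\textbf{Step 1: Propagation of the $\dot B^{-s_0}_{2,\infty}$ norm at low frequencies.} I would first analyze the linearization of \eqref{Eq:1-6} in Fourier variables. Since Umeda-Kawashima-Shizuta's condition holds, the linear semigroup $e^{t\mathcal{L}}$ behaves like the heat semigroup in low frequencies. Writing $(a,u,H)$ via Duhamel's formula and applying the semigroup decay together with Besov product laws to the nonlinear terms $(f,g,m)$, I would show that $\|(a,u,H)\|^\ell_{\dot B^{-s_0}_{2,\infty}}$ stays bounded by $\mathcal{D}_{p,0}+\mathcal{E}_p(t)\,\mathcal{D}_p(t)$, using that the nonlinearities are quadratic and that $s_0$ is chosen precisely so that the product estimates close (one has $s_0\le d/2$ under \eqref{Eq:1-4}).

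\textbf{Step 2: Time-weighted decay at low frequencies.} Once the $\dot B^{-s_0}_{2,\infty}$ norm is under control, I would interpolate between it and the higher-regularity $\dot B^{d/2+1}_{2,1}$ norm (finite by Theorem \ref{Thm1.1}) to recover every intermediate $\dot B^s_{2,1}$-norm. Combined with the sharp heat-like decay $\langle\tau\rangle^{-(s_0+s)/2}$ of the linearized MHD semigroup, this gives the first piece of $\mathcal{D}_p(t)$, namely
\[
\sup_{s\in[\varepsilon-s_0,\,d/2+1]}\bigl\|\langle\tau\rangle^{(s_0+s)/2}(a,u,H)\bigr\|^\ell_{L^\infty_t(\dot B^s_{2,1})}.
\]
The nonlinear contributions are treated by time-weighted Besov estimates, plugging in the low-frequency decay rates inductively.

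\textbf{Step 3: Time-weighted decay at high frequencies.} This is where the magnetic coupling is most delicate. The coupling terms $\nabla(I\cdot H)-I\cdot\nabla H$ in the velocity equation and $(\div u)I-I\cdot\nabla u$ in the magnetic equation are first-order and prevent a naive energy estimate from closing. Following the philosophy of Hoff's effective flux adapted in \cite{HB,SX}, I would introduce an \emph{effective velocity} absorbing the coupling, so that at high frequencies one derives a Lyapunov functional controlling $\|(\nabla a,u,H)\|^h_{\dot B^{d/p-1}_{p,1}}$ with effective damping on $\nabla a$ and parabolic smoothing on $(u,H)$. Multiplying the corresponding inequality by the weights $\langle\tau\rangle^{\alpha}$ and $\tau$ and integrating, the source terms are estimated using the product and composition laws in $\dot B^{d/p}_{p,1}$, keeping the contributions of $\mathcal{D}_p(t)$ on the right-hand side multiplied by $\mathcal{E}_p(t)$.

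\textbf{Closure and main obstacle.} Summing the three contributions produces an inequality of the form $\mathcal{D}_p(t)\lesssim \mathcal{D}_{p,0}+\|(\nabla a_0,u_0,H_0)\|^h_{\dot B^{d/p-1}_{p,1}}+\mathcal{E}_p(t)\,\mathcal{D}_p(t)$; the smallness in Theorem \ref{Thm1.1} then absorbs the last term and yields \eqref{Eq:1-8}. The main obstacle I foresee is Step 3: getting the high-frequency weight $\alpha=d/p+1/2-\varepsilon$ to match exactly with the low-frequency interpolation requires a careful choice of the effective velocity that simultaneously decouples the pressure-velocity block and the magnetic block, and requires exploiting $\div H=0$ to cancel the worst terms. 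A secondary difficulty is the need to handle the endpoint exponents in \eqref{Eq:1-4}, where certain product laws become borderline; this is why the small parameter $\varepsilon>0$ is introduced in the definition of $\alpha$ and in the range of $s$.
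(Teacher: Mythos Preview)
Your overall architecture is close to the paper's, but two concrete mismatches would keep the argument from closing as written.

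First, your Steps 1--2 for the low frequencies are not how the paper proceeds, and ``propagate $\dot B^{-s_0}_{2,\infty}$, then interpolate with $\dot B^{d/2+1}_{2,1}$'' does not by itself manufacture decay; interpolation between two bounded norms yields bounded intermediate norms, not $\langle t\rangle^{-(s_0+s)/2}$. In the paper (Section~3.1) one goes \emph{directly} to the time-weighted $\dot B^s_{2,1}$ norms via Duhamel: Lemma~\ref{Lem3.1} gives $\|\mathcal G(t)U_0\|^\ell_{\dot B^s_{2,1}}\lesssim \langle t\rangle^{-(s_0+s)/2}\|U_0\|^\ell_{\dot B^{-s_0}_{2,\infty}}$, and the Duhamel integral is handled by estimating $\|(f,g,m)(\tau)\|^\ell_{\dot B^{-s_0}_{2,\infty}}$ in terms of $\mathcal D_p(\tau)$ and $\mathcal E_p(\tau)$, with a genuine case split $p\le d$ versus $p>d$ (the latter needing Proposition~\ref{Prop2.3}). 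No separate propagation of the $\dot B^{-s_0}_{2,\infty}$ norm is carried out.

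Second, your Step 3 conflates two distinct high-frequency arguments. The effective-velocity $L^p$ energy method (Section~3.2) yields only the second piece of $\mathcal D_p$, namely $\|\langle\tau\rangle^\alpha(\nabla a,u,H)\|^h_{\tilde L^\infty(\dot B^{d/p-1}_{p,1})}$. The third piece $\|\tau\nabla(u,H)\|^h_{\tilde L^\infty(\dot B^{d/p}_{p,1})}$ involves a \emph{gain of one derivative} and cannot come from that Lyapunov functional; it requires a separate parabolic-smoothing step (Section~3.3, applying Proposition~\ref{Prop2.7} and Remark~\ref{Rem2.2} to $t\mathcal A u$ and $t\Delta H$). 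Moreover, your concern that the effective velocity must ``simultaneously decouple the magnetic block'' is misplaced: $w=\nabla(-\Delta)^{-1}(a-\div u)$ involves no magnetic variable, and the first-order coupling terms $I\cdot\nabla H$, $(\div u)I$, $I\cdot\nabla u$ are lower-order relative to $\Delta$ for $k\ge k_0-1$, appearing only as harmless source contributions like $C2^{k}\|H_k\|_{L^p}$ in \eqref{Eq:3-33}--\eqref{Eq:3-35} and absorbed by taking $k_0$ large. Finally, the closure the paper actually reaches is $\mathcal D_p\lesssim \mathcal D_{p,0}+\|(\nabla a_0,u_0,H_0)\|^h_{\dot B^{d/p-1}_{p,1}}+\mathcal E_p^2+\mathcal D_p^2$, so absorbing the last term uses the smallness hypothesis \eqref{Eq:1-7} and a bootstrap on $\mathcal D_p$, not merely the smallness of $\mathcal E_p$ from Theorem~\ref{Thm1.1}.
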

\begin{rem}\label{Rem1.2}
If replacing \eqref{Eq:1-7} by the following slightly stronger condition:
$$\left\|\left(a_{0},u_{0},H_{0}\right)\right\|^{\ell}_{\dot{B}^{-s_{0}}_{2,1}}\leq c\ll1,$$
then we can take $\varepsilon=0$ in $\alpha$,  and the first term of $\mathcal{D}_{p}(t)$ is correspondingly replaced by
$$\sup_{s\in\left[-s_{0},\frac{d}{2}+1\right]}\left\|\langle\tau\rangle^{\frac{s_{0}+s}{2}}
\left(a,u,H\right)\right\|^{\ell}_{\tilde{L}^{\infty}_{t}(\dot{B}^{s}_{2,1})}.$$
\end{rem}
In the case of high Sobolev regularity, see for example \cite{KS}, one usual get the optimal time-decay estimate $L^{1}$-$L^{2}$ as in \eqref{Eq:1-3} by using
the combination of linearized spectral analysis and Duhamel principle. Here let's state the major difficulty and the strategy to overcome it. More concrete speaking, by the rough spectral analysis in our recent paper \cite{SX}, we know system
\eqref{Eq:1-6} has a parabolic smoothing effect on $u$, $H$ as well as the low frequencies of $a$, and a damping effect on the high
frequencies of $a$. Obviously, there is a loss of one derivative of density (see the convection term $u\cdot\nabla a$ in the transport equation), but
one cannot afford any loss of regularity for the high-frequency part of density. Consequently, Duhamel principle cannot remain valid in the critical regularity framework. To overcome the difficulty, we shall use the effective velocity (introduced by Haspot in \cite{HB}, which is another name for the
\textit{viscous effective flux} of Hoff \cite{HD}) to get the estimates of high frequencies, which allows to afford the loss of one derivative coming from the density in the transport equation. On the other hand, due to the strong interaction between the magnetic fluid and the hydrodynamic motion, the spectral analysis becomes more complicated mathematically. In order to get $L^{2}$ decay estimates for the low frequency at lower price, alternatively, one may attempt to use the general theory of partially dissipative first-order symmetric systems initiated by Godunov \cite{GSK}, then well developed by Kawashima et al. \cite{SK}. Although the current proofs are in spirit of the joint work with Danchin for compressible Navier-Stokes equations (see \cite{DX}), the strong coupling between the magnetic field and fluid dynamics need to be handled with. In particular, due to non-zero equilibrium of magnetic field in comparison with \cite{CT,GCY,LY,TW}, we take care of extra
linear coupling terms (see \eqref{Eq:1-6}) in energy approaches.

In addition, we give some consequences of Theorem \ref{Thm1.2}.
\begin{cor}\label{Cor1.1}
The solution $(\varrho,u,B)$ constructed in Theorem \emph{\ref{Thm1.2}} fulfills
$$\left\|\Lambda^{s}\left(\varrho-1\right)\right\|_{L^{p}}\lesssim  \left(\mathcal{D}_{p,0}+\left\|\left(\nabla a_{0},u_{0},H_{0}\right)\right\|^{h}_{\dot{B}^{\frac{d}{p}-1}_{p,1}}\right)\langle t\rangle^{-\frac{s_{0}+s}{2}} \text{ if }-s_{0}<s\leq\frac{d}{p},$$
$$\left\|\Lambda^{s}\left(u,B-I\right)\right\|_{L^{p}}\lesssim \left(\mathcal{D}_{p,0}+\left\|\left(\nabla a_{0},u_{0},H_{0}\right)\right\|^{h}_{\dot{B}^{\frac{d}{p}-1}_{p,1}}\right)\langle t\rangle^{-\frac{s_{0}+s}{2}} \text{ if }-s_{0}<s\leq\frac{d}{p}-1,$$
where the pseudo differential operator $\Lambda^{\ell}$ is defined by $\Lambda^{\ell}f\triangleq\mathcal{F}^{-1}\left(|\xi|^{\ell}\mathcal{F}f\right)$.
\end{cor}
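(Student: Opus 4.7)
The plan is to derive the pointwise-in-time $L^p$ decay by splitting each of $a$, $u$, $H=B-I$ into its low- and high-frequency parts, and then to bound each piece by the appropriate term of the functional $\mathcal{D}_p(t)$ already controlled in Theorem \ref{Thm1.2}.

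For the low-frequency part, I would invoke Bernstein's inequality: since $f^\ell=\dot S_{k_0+1}f$ is spectrally localized in a fixed ball, one has, for $p\ge 2$ and any real $s$,
$$\|\Lambda^{s} f^\ell\|_{L^p}\lesssim \|\Lambda^{s} f^\ell\|_{L^2}\lesssim \|f^\ell\|_{\dot B^{s}_{2,1}}.$$
The first term in the definition of $\mathcal{D}_p(t)$ then directly yields
$$\|f^\ell\|_{\dot B^{s}_{2,1}}\lesssim \langle t\rangle^{-(s_0+s)/2}\,\mathcal{D}_p(t),$$
provided $s\in[\varepsilon-s_0,\tfrac{d}{2}+1]$; since the corollary only requires $s>-s_0$, one fixes $\varepsilon>0$ sufficiently small depending on $s$.

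For the high-frequency part, I would use the embedding $\dot B^{0}_{p,1}\hookrightarrow L^p$ to get $\|\Lambda^{s} f^h\|_{L^p}\lesssim \|f^h\|_{\dot B^{s}_{p,1}}$, together with the fact that restricting to frequencies $j\ge k_0$ gives $\|f^h\|_{\dot B^{s}_{p,1}}\lesssim \|f^h\|_{\dot B^{\sigma}_{p,1}}$ whenever $s\le\sigma$. For $a$, the constraint $s\le d/p$ allows me to pass to $\|a^h\|_{\dot B^{d/p}_{p,1}}\simeq \|\nabla a^h\|_{\dot B^{d/p-1}_{p,1}}$; for $(u,H)$, the constraint $s\le d/p-1$ allows the bound by $\|(u,H)^h\|_{\dot B^{d/p-1}_{p,1}}$. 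Theorem \ref{Thm1.2} then dominates each of these by $\langle t\rangle^{-\alpha}\mathcal{D}_p(t)$ with $\alpha=d/p+1/2-\varepsilon$.

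The only slightly delicate point — and the step I would single out as the main obstacle — is checking that the high-frequency rate $\alpha$ is at least as fast as the announced rate $(s_0+s)/2$ on the full admissible range of $s$. The worst cases are $s=d/p$ (for the density) and $s=d/p-1$ (for $u$ and $H$). A direct computation reduces both inequalities to
$$\frac{d}{2p}+\varepsilon \le \frac{d+2}{4},$$
which holds for every $p\ge 2$ once $\varepsilon$ is taken small, and is consistent with the assumption \eqref{Eq:1-4}. Summing the low- and high-frequency contributions, and recalling that $\mathcal{D}_p(t)$ is itself bounded by $\mathcal{D}_{p,0}+\|(\nabla a_0,u_0,H_0)\|_{\dot B^{d/p-1}_{p,1}}^{h}$ by Theorem \ref{Thm1.2}, gives the claimed inequalities.
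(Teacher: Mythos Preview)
Your proposal is correct and follows essentially the same route as the paper: split into low and high frequencies, control the low-frequency piece via the $\dot B^{s}_{2,1}$ term of $\mathcal D_p(t)$ (the paper phrases this as the embedding $\dot B^{s}_{2,1}\hookrightarrow\dot B^{s}_{p,1}$ for spectrally localized functions, which is your Bernstein step), control the high-frequency piece by $\|\langle\tau\rangle^{\alpha}(\nabla a,u,H)\|^{h}_{\tilde L^\infty(\dot B^{d/p-1}_{p,1})}$, and verify $\alpha\ge (s_0+s)/2$ on the stated ranges of $s$. The paper only writes out the case of $H$ and defers the density to \cite{DX}, whereas you handle both explicitly; otherwise the arguments coincide.
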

\begin{rem}\label{Rem1.3}
Taking $p=2$ \emph{(}$s_{0}=\frac{d}{2}$\emph{)} and $s=0$ in Corollary \emph{\ref{Cor1.1}} leads back to the standard optimal $L^{1}$-$L^{2}$ decay rate of $\left(\varrho-1,u,B-I\right)$, which was firstly shown by \cite{KS} and developed by \emph{\cite{CT,GCY,LY,TW}} etc..
The harmonic analysis allows us to establish the optimal decay estimates in lower regularity framework.
Moreover, it is observed that the derivative exponent $s$ can take values in some interval, and hence there is no derivative loss for those decay estimates.
\end{rem}
One can get more $L^{q}$-$L^{r}$ decay estimates with aid of Gagliardo-Nirenberg interpolation inequalities which parallel the work of Sohinger and Strain \cite{SS}
(see also \cite{BCD}, Chap. 2 or \cite{XK1,XK2}).
\begin{cor}\label{Cor1.2}
 Let the assumptions of Theorem \emph{\ref{Thm1.2}} be satisfied with $p=2$. Then the corresponding solution $\left(\varrho,u,B\right)$ fulfills
$$\left\|\Lambda^{l}\left(\varrho-1,u,B-I\right)\right\|_{L^{r}}\lesssim \left(\mathcal{D}_{2,0}+\left\|\left(\nabla a_{0},u_{0},H_{0}\right)\right\|^{h}_{\dot{B}^{\frac{d}{2}-1}_{2,1}}\right)
\langle t\rangle^{-\frac{d}{2}\left(1-\frac{1}{r}\right)-\frac{l}{2}},$$
for all $2\leq r\leq\infty$ and $l\in\mathbb{R}$ satisfying $-\frac{d}{2}<l+d\left(\frac{1}{2}-\frac{1}{r}\right)\leq\frac{d}{2}-1$.
\end{cor}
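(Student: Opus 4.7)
The plan is to bound $\|\Lambda^{l}(\varrho-1,u,B-I)\|_{L^{r}}$ by the homogeneous Besov norm $\|\cdot\|_{\dot B^{\sigma}_{2,1}}$ with $\sigma:=l+d(\frac{1}{2}-\frac{1}{r})$ via Bernstein, and then to read off the decay rate directly from the functional $\mathcal{D}_{2}(t)$ already controlled in Theorem \ref{Thm1.2}. For $2\leq r\leq\infty$, Bernstein's inequality $\|\Delta_{j}g\|_{L^{r}}\lesssim 2^{jd(\frac{1}{2}-\frac{1}{r})}\|\Delta_{j}g\|_{L^{2}}$ gives
$$\|\Lambda^{l}g\|_{L^{r}}\leq\sum_{j\in\Z}2^{jl}\|\Delta_{j}g\|_{L^{r}}\lesssim\|g\|_{\dot B^{\sigma}_{2,1}}.$$
The hypothesis $-\frac{d}{2}<l+d(\frac{1}{2}-\frac{1}{r})\leq\frac{d}{2}-1$ translates to $-\frac{d}{2}<\sigma\leq\frac{d}{2}-1$. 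Split $\|g\|_{\dot B^{\sigma}_{2,1}}=\|g\|^{\ell}_{\dot B^{\sigma}_{2,1}}+\|g\|^{h}_{\dot B^{\sigma}_{2,1}}$ at the cutoff $j=k_{0}$ and treat $g\in\{a,u,H\}$ separately.

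For the low-frequency part, choose $\varepsilon>0$ small enough that $\sigma\geq\varepsilon-s_{0}$ (recall $s_{0}=d/2$ since $p=2$); the first term of $\mathcal{D}_{2}(t)$ evaluated at $s=\sigma$ then yields
$$\|(a,u,H)\|^{\ell}_{\dot B^{\sigma}_{2,1}}\lesssim\langle t\rangle^{-\frac{s_{0}+\sigma}{2}}\mathcal{D}_{2}(t),$$
and a direct computation gives $\frac{s_{0}+\sigma}{2}=\frac{d}{2}(1-\frac{1}{r})+\frac{l}{2}$, which is exactly the exponent appearing in the statement.

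For the high-frequency part, the elementary bound $2^{j\sigma}\leq 2^{j(d/2-1)}$ valid for $j\geq k_{0}$ and $\sigma\leq d/2-1$ yields
$$\|(u,H)\|^{h}_{\dot B^{\sigma}_{2,1}}\lesssim\|(u,H)\|^{h}_{\dot B^{d/2-1}_{2,1}},\qquad \|a\|^{h}_{\dot B^{\sigma}_{2,1}}\lesssim\|\nabla a\|^{h}_{\dot B^{d/2-1}_{2,1}},$$
and the second ingredient of $\mathcal{D}_{2}(t)$ bounds each right-hand side by $\langle t\rangle^{-\alpha}\mathcal{D}_{2}(t)$ with $\alpha=\frac{d+1}{2}-\varepsilon$. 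The maximum of $\frac{d}{2}(1-\frac{1}{r})+\frac{l}{2}$ over the admissible domain is attained at $\sigma=\frac{d}{2}-1$ and equals $\frac{d-1}{2}$, which is strictly smaller than $\alpha$ once $\varepsilon<1$; hence the high-frequency piece decays strictly faster than demanded and is absorbed into the same bound. Combining both contributions with $\mathcal{D}_{2}(t)\lesssim\mathcal{D}_{2,0}+\|(\nabla a_{0},u_{0},H_{0})\|^{h}_{\dot B^{d/2-1}_{2,1}}$ from Theorem \ref{Thm1.2} yields the claim. No genuine obstacle arises in this outline; the only delicate point is the bookkeeping of admissible indices, namely ensuring that $\sigma$ lies in the range where the low-frequency weighted Besov estimate and the high-frequency comparison simultaneously hold, and that $\alpha$ strictly dominates the required rate.
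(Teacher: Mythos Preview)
Your argument is correct. The paper's own proof is terser: it simply invokes Corollary~\ref{Cor1.1} (which for $p=2$ supplies $L^{2}$ decay of $\Lambda^{s}(a,u,H)$) and then applies Gagliardo--Nirenberg interpolation to pass from $L^{2}$ to $L^{r}$. You bypass Corollary~\ref{Cor1.1} altogether and go straight from Theorem~\ref{Thm1.2}, using the Besov embedding $\dot B^{\sigma}_{2,1}\hookrightarrow\dot B^{l}_{r,1}\hookrightarrow L^{r}$ (a direct consequence of Bernstein) and then reading off the $\dot B^{\sigma}_{2,1}$ bound from the two pieces of $\mathcal{D}_{2}(t)$ via the same low/high-frequency split that underlies the proof of Corollary~\ref{Cor1.1}. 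The two routes are equivalent in substance---Gagliardo--Nirenberg in this scale is itself a Littlewood--Paley statement---but yours is a bit more transparent because it avoids the intermediate corollary and makes explicit where each frequency regime contributes. The only bookkeeping point worth flagging is that $\varepsilon$ in Theorem~\ref{Thm1.2} is a fixed (though arbitrarily small) parameter rather than one you may tune after the fact; since $\sigma>-s_{0}$ is strict and $\alpha$ need only exceed $(d-1)/2$, any sufficiently small choice of $\varepsilon$ made once in Theorem~\ref{Thm1.2} covers the full admissible range uniformly.
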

The rest of the paper unfolds as follows: In section 2, we briefly recall Littlewood-Paley decomposition, Besov spaces  and related analysis tools.
Section 3 is devoted to the proofs of Theorem \ref{Thm1.2} and Corollaries \ref{Cor1.1}-\ref{Cor1.2}.
\section{Preliminary}\setcounter{equation}{0}
Throughout the paper, $C>0$ stands for a harmless ``constant''. For brevity, we sometime write
$f\lesssim g$ instead of $f\leq Cg$. The notation $f\approx g$ means that $%
f\lesssim g$ and $g\lesssim f$. For any Banach space $X$ and $f,g\in X$, we agree that
$\left\|\left(f,g\right)\right\| _{X}\triangleq \left\|f\right\| _{X}+\left\|g\right\|_{X}$. For
all $T>0$ and $\rho\in\left[1,+\infty\right]$, we denote by
$L_{T}^{\rho}(X) \triangleq L^{\rho}\left(\left[0,T\right];X\right)$ the set of measurable functions $f:\left[0,T\right]\rightarrow X$ such that $t\mapsto\left\|f(t)\right\|_{X}$ is in $L^{\rho}\left(0,T\right)$.

To make the paper self-contained, let us briefly recall Littlewood-Paley decomposition, Besov spaces and related analysis tools. More details
may be found for example in Chap. 2 and Chap. 3 of \cite{BCD}. Firstly, we introduce a homogeneous Littlewood-Paley decomposition. To do this, we fix some smooth
radial non increasing function $\chi $ with $\mathrm{Supp}\,\chi \subset
B\left(0,\frac {4}{3}\right)$ and $\chi \equiv 1$ on $B\left(0,\frac
{3}{4}\right)$, then set $\varphi \left(\xi\right) =\chi \left(\xi/2\right)-\chi \left(\xi\right)$
so that
\[
\sum_{k\in \mathbb{Z}}\varphi \left( 2^{-k}\cdot \right) =1\ \ \text{in}\ \
\mathbb{R}^{d}\setminus \{ 0\} \ \ \text{and}\ \ \mathrm{Supp}\,\varphi \subset \left\{ \xi \in \mathbb{R}^{d}:3/4\leq |\xi|\leq 8/3\right\} .
\]
Then homogeneous dyadic blocks $\dot{\Delta}_k$ are defined by
\[
\dot{\Delta}_{k}f\triangleq \varphi (2^{-k}D)f=\mathcal{F}^{-1}(\varphi
(2^{-k}\cdot )\mathcal{F}f)=2^{kd}h(2^{k}\cdot )\star f\ \ \text{with}\ \
h\triangleq \mathcal{F}^{-1}\varphi .
\]
Therefore, one has the following homogeneous decomposition of a tempered distribution $f\in S^{\prime }(\mathbb{R}^{d})$
\begin{equation} \label{Eq:2-1}
f=\sum_{k\in \mathbb{Z}}\dot{\Delta}_{k}f.
\end{equation}
As it holds only modulo polynomials, it is convenient to consider the subspace of those tempered distributions $f$ such that
\begin{equation}\label{Eq:2-2}
\lim_{k\rightarrow -\infty }\| \dot{S}_{k}f\| _{L^{\infty} }=0,
\end{equation}
where $\dot{S}_{k}f$ stands for the low frequency cut-off defined by $\dot{S}_{k}f\triangleq\chi \left(2^{-k}D\right)f$. Indeed,
if \eqref{Eq:2-2} is fulfilled, then \eqref{Eq:2-1} holds true in $S'\left(\mathbb{R}^{d}\right)$. For convenience, we denote by $S'_{0}\left(\mathbb{R}^{d}\right)$ the subspace of tempered distributions satisfying \eqref{Eq:2-2}.

With aid of the Littlewood-Paley decomposition, Besov spaces
and related analysis tools will come into play in our context.
\begin{defn}\label{Defn2.1}
For $s\in \mathbb{R}$ and $1\leq p,r\leq\infty,$ the homogeneous
Besov spaces $\dot{B}^{s}_{p,r}$ is defined by
$$\dot{B}^{s}_{p,r}\triangleq\left\{f\in S'_{0}:\left\|f\right\|_{\dot{B}^{s}_{p,r}}<+\infty\right\},$$
where
\begin{equation}\label{Eq:2-3}
\left\|f\right\|_{\dot B^{s}_{p,r}}\triangleq\left\|\left(2^{ks}\|\ddk  f\|_{L^p}\right)\right\|_{\ell^{r}(\Z)}.
\end{equation}
\end{defn}
When studying the evolution PDEs, a class of mixed space-time Besov spaces are also used, which was first introduced by J.-Y. Chemin and N. Lerner \cite{CL} (see also \cite{CJY} for the particular case of Sobolev spaces).
\begin{defn}\label{Defn2.2}
 For $T>0, s\in\mathbb{R}, 1\leq r,\theta\leq\infty$, the homogeneous Chemin-Lerner space $\widetilde{L}^{\theta}_{T}(\dot{B}^{s}_{p,r})$
is defined by
$$\widetilde{L}^{\theta}_{T}(\dot{B}^{s}_{p,r})\triangleq\left\{f\in L^{\theta}\left(0,T;S'_{0}\right):\left\|f\right\|_{\widetilde{L}^{\theta}_{T}(\dot{B}^{s}_{p,r})}<+\infty\right\},$$
where
\begin{equation}\label{Eq:2-4}
\left\|f\right\|_{\widetilde{L}^{\theta}_{T}(\dot{B}^{s}_{p,r})}\triangleq\left\|\left(2^{ks}\|\ddk  f\|_{L^{\theta}_{T}(L^{p})}\right)\right\|_{\ell^{r}(\Z)}.
\end{equation}
\end{defn}
For notational simplicity, index $T$ is omitted if $T=+\infty $.
We also use the following functional space:
\begin{equation*}
\tilde{\mathcal{C}}_{b}\left(\mathbb{R_{+}};\dot{B}_{p,r}^{s}\right)\triangleq \left\{f \in
\mathcal{C}\left(\mathbb{R_{+}};\dot{B}_{p,r}^{s}\right)\ \hbox{s.t}\ \left\|f\right\| _{\tilde{L}^{\infty}(\dot{B}_{p,r}^{s})}<+\infty \right\} .
\end{equation*}
The above norm \eqref{Eq:2-4} may be compared with those of the standard spaces $L_{T}^{\rho} (\dot{B}_{p,r}^{s})$ by means of Minkowski's
inequality.
\begin{rem}\label{Rem2.1}
It holds that
$$\left\|f\right\|_{\widetilde{L}^{\theta}_{T}(B^{s}_{p,r})}\leq\left\|f\right\|_{L^{\theta}_{T}(B^{s}_{p,r})}\,\,\,
\mbox{if} \,\, \, r\geq\theta;\ \ \ \
\left\|f\right\|_{\widetilde{L}^{\theta}_{T}(B^{s}_{p,r})}\geq\left\|f\right\|_{L^{\theta}_{T}(B^{s}_{p,r})}\,\,\,
\mbox{if}\,\,\, r\leq\theta.
$$\end{rem}
Restricting the above norms \eqref{Eq:2-3} and \eqref{Eq:2-4} to the low or high
frequencies parts of distributions will be fundamental in our approach. For example, let us fix some integer $k_{0}$ (the value of which will
follow from the proof of the main theorem) and put\footnote{Note that for technical reasons, we need a small
overlap between low and high frequencies.}
$$\left\| f\right\| _{\dot{B}_{p,1}^{s}}^{\ell} \triangleq \sum_{k\leq
k_{0}}2^{ks}\left\| \dot{\Delta}_{k}f\right\|_{L^{p}} \ \mbox{and} \ \left\|f\right\|_{\dot{B}_{p,1}^{s}}^{h}\triangleq \sum_{k\geq k_{0}-1}2^{ks}\left\| \dot{\Delta}_{k}f\right\| _{L^{p}},$$
$$\left\|f\right\| _{\tilde{L}_{T}^{\infty} (\dot{B}_{p,1}^{s})}^{\ell} \triangleq
\sum_{k\leq k_{0}}2^{ks}\left\|\dot{\Delta}_{k}f\right\|_{L_{T}^{\infty} (L^{p})} \
\mbox{and} \ \left\|f\right\| _{\tilde{L}_{T}^{\infty} (\dot{B}_{p,1}^{s})}^{h}\triangleq \sum_{k\geq k_{0}-1}2^{ks}\left\| \dot{\Delta}_{k}f\right\|
_{L_{T}^{\infty} (L^{p})}.$$

We often use the following classical properties (see \cite{BCD}):

$\bullet$ \ \emph{Scaling invariance:} For any $s\in \mathbb{R}$ and $(p,r)\in
[1,\infty ]^{2}$, there exists a constant $C=C(s,p,r,d)$ such that for all $\lambda >0$ and $f\in \dot{B}_{p,r}^{s}$, we have
$$
C^{-1}\lambda ^{s-\frac {d}{p}}\left\|f\right\|_{\dot{B}_{p,r}^{s}}
\leq \left\|f(\lambda \cdot)\right\|_{\dot{B}_{p,r}^{s}}\leq C\lambda ^{s-\frac {d}{p}}\left\|f\right\|_{\dot{B}_{p,r}^{s}}.
$$

$\bullet$ \ \emph{Completeness:} $\dot{B}^{s}_{p,r}$ is a Banach space whenever $%
s<\frac{d}{p}$ or $s\leq \frac{d}{p}$ and $r=1$.

$\bullet$ \ \emph{Interpolation:} The following inequalities are satisfied for $1\leq p,r_{1},r_{2}, r\leq \infty, s_{1}\neq s_{2}$ and $\theta \in (0,1)$:
$$\left\|f\right\|_{\dot{B}_{p,r}^{\theta s_{1}+(1-\theta )s_{2}}}\lesssim \left\|f\right\| _{\dot{B}_{p,r_{1}}^{s_{1}}}^{\theta} \left\|f\right\|_{\dot{B}_{p,r_2}^{s_{2}}}^{1-\theta }.$$

$\bullet$ \ \emph{Action of Fourier multipliers:} If $F$ is a smooth homogeneous of
degree $m$ function on $\mathbb{R}^{d}\backslash \{0\}$ then
$$F(D):\dot{B}_{p,r}^{s}\rightarrow \dot{B}_{p,r}^{s-m}.$$
\begin{prop} \label{Prop2.1} (Embedding for Besov spaces on $\R^{d}$)
\begin{itemize}
\item For any $p\in[1,\infty]$ we have the  continuous embedding
$\dot {B}^{0}_{p,1}\hookrightarrow L^{p}\hookrightarrow \dot {B}^{0}_{p,\infty}.$
\item If $\sigma\in\R$, $1\leq p_{1}\leq p_{2}\leq\infty$ and $1\leq r_{1}\leq r_{2}\leq\infty,$
then $\dot {B}^{\sigma}_{p_1,r_1}\hookrightarrow
\dot {B}^{\sigma-d(\frac{1}{p_{1}}-\frac{1}{p_{2}})}_{p_{2},r_{2}}$.
\item The space  $\dot {B}^{\frac {d}{p}}_{p,1}$ is continuously embedded in the set  of
bounded  continuous functions \emph{(}going to zero at infinity if, additionally, $p<\infty$\emph{)}.
\end{itemize}
\end{prop}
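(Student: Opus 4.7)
The plan is to prove the three bullet points in order, each reducing to Bernstein's inequality and basic properties of $\ell^r$ and $L^p$.

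For the first item, I would start from the homogeneous decomposition $f=\sum_{k\in\mathbb{Z}}\dot\Delta_{k}f$, valid in $S'_{0}$ by \eqref{Eq:2-1}--\eqref{Eq:2-2}. The triangle inequality in $L^{p}$ gives $\|f\|_{L^{p}}\le\sum_{k}\|\dot\Delta_{k}f\|_{L^{p}}=\|f\|_{\dot B^{0}_{p,1}}$, which yields $\dot B^{0}_{p,1}\hookrightarrow L^{p}$. For $L^{p}\hookrightarrow\dot B^{0}_{p,\infty}$, I would use the convolution representation $\dot\Delta_{k}f=2^{kd}h(2^{k}\cdot)\star f$ and Young's inequality to get $\|\dot\Delta_{k}f\|_{L^{p}}\le\|h\|_{L^{1}}\|f\|_{L^{p}}$ uniformly in $k$, which is precisely the $\ell^{\infty}$ control defining the target norm.

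For the second item, the key tool is Bernstein's inequality: since the Fourier support of $\dot\Delta_{k}f$ lies in the annulus $\{3\cdot 2^{k-2}\le|\xi|\le 8\cdot 2^{k}/3\}$, one has $\|\dot\Delta_{k}f\|_{L^{p_{2}}}\lesssim 2^{kd(1/p_{1}-1/p_{2})}\|\dot\Delta_{k}f\|_{L^{p_{1}}}$ whenever $p_{1}\le p_{2}$. Multiplying by $2^{k(\sigma-d(1/p_{1}-1/p_{2}))}$ gives the pointwise-in-$k$ bound
\[
2^{k(\sigma-d(1/p_{1}-1/p_{2}))}\|\dot\Delta_{k}f\|_{L^{p_{2}}}\lesssim 2^{k\sigma}\|\dot\Delta_{k}f\|_{L^{p_{1}}}.
\]
Taking the $\ell^{r_{2}}(\mathbb{Z})$-norm of both sides and invoking $\ell^{r_{1}}\hookrightarrow\ell^{r_{2}}$ (valid for $r_{1}\le r_{2}$) on the right-hand side yields the stated embedding.

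For the third item, I would chain the previous two: applying the second bullet with $p_{1}=p$, $p_{2}=\infty$, $r_{1}=r_{2}=1$, $\sigma=d/p$ gives $\dot B^{d/p}_{p,1}\hookrightarrow\dot B^{0}_{\infty,1}$, and the first bullet (at $p=\infty$) gives $\dot B^{0}_{\infty,1}\hookrightarrow L^{\infty}$. To upgrade to $\mathcal C_{b}$, I would observe that each block $\dot\Delta_{k}f$ is a convolution with a Schwartz function and hence continuous, while the series $\sum_{k}\dot\Delta_{k}f$ converges normally in $L^{\infty}$, so the sum is uniformly continuous. The most delicate step is the vanishing at infinity when $p<\infty$: here I would argue by density, approximating $f$ in $\dot B^{d/p}_{p,1}$ by $\varphi\in\mathcal S$ (whose dyadic blocks clearly tend to $0$ at infinity), and then transferring the property to the uniform limit. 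The density argument, together with controlling the very-low-frequency blocks $\dot\Delta_{k}\varphi$ for $k\to-\infty$ via the $L^{p}$ decay of $\varphi$, is the main technical point — the rest is a mechanical application of Bernstein and Young inequalities.
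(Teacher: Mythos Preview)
Your proof is correct and follows the standard route via Bernstein's inequality, Young's inequality, and the $\ell^{r_1}\hookrightarrow\ell^{r_2}$ embedding. Note, however, that the paper does not actually prove Proposition~\ref{Prop2.1}: it is stated as a classical fact with a reference to \cite{BCD}, so there is no ``paper's own proof'' to compare against. Your argument is precisely the one found in that reference.

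One small remark on the third item: the density step can be bypassed. Each block $\dot\Delta_{k}f$ has Fourier transform supported in a fixed annulus, so $\dot\Delta_{k}f=\tilde\psi_{k}\ast\dot\Delta_{k}f$ for a suitable Schwartz function $\tilde\psi_{k}$; since $\dot\Delta_{k}f\in L^{p}$ with $p<\infty$ and $\tilde\psi_{k}\in L^{p'}$, the convolution lies in $C_{0}(\mathbb{R}^{d})$ directly. The normal convergence in $L^{\infty}$ you already established then gives $f\in C_{0}$ without invoking density of $\mathcal{S}$ in $\dot B^{d/p}_{p,1}$ (which is true but requires its own justification). Otherwise your write-up is complete.
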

The following product estimates in Besov spaces play a fundamental role in our analysis of the bilinear terms of \eqref{Eq:1-6}.
\begin{prop}\label{Prop2.2}(\cite{BCD, DX})
Let $\sigma>0$ and $1\leq p,r\leq\infty$. Then $\dot{B}^{\sigma}_{p,r}\cap L^{\infty}$ is an algebra and
$$
\left\|fg\right\|_{\dot{B}^{\sigma}_{p,r}}\lesssim \left\|f\right\|_{L^{\infty}}\left\|g\right\|_{\dot{B}^{\sigma}_{p,r}}+\left\|g\right\|_{L^{\infty}}\left\|f\right\|_{\dot{B}^{\sigma}_{p,r}}.
$$
Let the real numbers $\sigma_{1},$ $\sigma_{2},$ $p_1$  and $p_2$ be such that
$$
\sigma_{1}+\sigma_{2}>0,\quad \sigma_{1}\leq\frac {d}{p_{1}},\quad\sigma_{2}\leq\frac {d}{p_{2}},\quad
\sigma_{1}\geq\sigma_{2},\quad\frac{1}{p_{1}}+\frac{1}{p_{2}}\leq1.
$$
Then we have
$$\left\|fg\right\|_{\dot{B}^{\sigma_{2}}_{q,1}}\lesssim \left\|f\right\|_{\dot{B}^{\sigma_{1}}_{p_{1},1}}\left\|g\right\|_{\dot{B}^{\sigma_{2}}_{p_{2},1}}\quad\hbox{with}\quad
\frac1{q}=\frac1{p_{1}}+\frac1{p_{2}}-\frac{\sigma_{1}}d\cdotp$$
Additionally, for exponents $\sigma>0$ and $1\leq p_{1},p_{2},q\leq\infty$ satisfying
$$\frac{d}{p_{1}}+\frac{d}{p_{2}}-d\leq \sigma \leq\min\left(\frac {d}{p_{1}},\frac {d}{p_{2}}\right)\quad\hbox{and}\quad \frac{1}{q}=\frac {1}{p_{1}}+\frac {1}{p_{2}}-\frac{\sigma}{d},$$
we have
$$\left\|fg\right\|_{\dot{B}^{-\sigma}_{q,\infty}}\lesssim\left\|f\right\|_{\dot{B}^{\sigma}_{p_{1},1}}\left\|g\right\|_{\dot{B}^{-\sigma}_{p_{2},\infty}}.$$
\end{prop}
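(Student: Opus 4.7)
My plan is to derive all three inequalities from Bony's homogeneous paraproduct decomposition
$$ fg \;=\; T_{f}g \,+\, T_{g}f \,+\, R(f,g), $$
with $T_{f}g \triangleq \sum_{k\in\Z}\dot S_{k-1}f\,\ddk g$ and $R(f,g) \triangleq \sum_{k}\sum_{|\nu|\le 1}\ddk f\,\dot\Delta_{k+\nu}g$. The spectrum of each summand of $T_{f}g$ lies in an annulus of size $\sim 2^{k}$, whereas the spectrum of $\ddk f\,\dot\Delta_{k+\nu}g$ lies in a ball of radius $\sim 2^{k}$. Combining these localizations with Bernstein's inequality and Hölder's inequality reduces each estimate to summing a geometric series in $k$, the convergence of which imposes the algebraic conditions in the three statements.

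For the first (algebra) inequality, the paraproduct pieces are controlled by the generic bound $\|T_{f}g\|_{\dot B^{\sigma}_{p,r}} \lesssim \|f\|_{L^{\infty}}\|g\|_{\dot B^{\sigma}_{p,r}}$, which uses $\|\dot S_{k-1}f\|_{L^{\infty}}\le\|f\|_{L^{\infty}}$ and the annular spectrum; the term $T_{g}f$ is symmetric. For $R(f,g)$, the ball localization forces $\ddj R(f,g)$ to pick up only the contributions with $k\ge j-N_{0}$, and the assumption $\sigma>0$ is exactly what makes the sum $\sum_{k\ge j}2^{(j-k)\sigma}$ converge after a Hölder step pairing $\|\ddk f\|_{L^{p}}$ with $\|\dot\Delta_{k+\nu}g\|_{L^{\infty}}$ (and symmetrically).

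For the second estimate, bound $\|\dot S_{k-1}f\|_{L^{p^{*}}}\lesssim\|f\|_{\dot B^{\sigma_{1}}_{p_{1},1}}$ via the embedding $\dot B^{\sigma_{1}}_{p_{1},1}\hookrightarrow L^{p^{*}}$ with $1/p^{*}=1/p_{1}-\sigma_{1}/d$ (falling back to the embedding $\dot B^{d/p_{1}}_{p_{1},1}\hookrightarrow L^{\infty}$ from Proposition~\ref{Prop2.1} when $\sigma_{1}=d/p_{1}$), then apply Hölder with $\|\ddk g\|_{L^{p_{2}}}$ to reach $L^{q}$ for $1/q=1/p^{*}+1/p_{2}$, matching the stated relation. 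Summation in $k$ with the weight $2^{k\sigma_{2}}$ uses the $\ell^{1}$-summability of the normalized Littlewood-Paley coefficients of $g$, and the assumption $\sigma_{1}\ge\sigma_{2}$ allows the symmetric treatment of $T_{g}f$ (now $f$ supplies the target regularity $\sigma_{2}$). The remainder is handled via the ball-localization trick, the geometric sum converging thanks to $\sigma_{1}+\sigma_{2}>0$; the spatial Hölder step requires $1/p_{1}+1/p_{2}\le 1$, exactly as assumed.

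For the third estimate, the target norm is weakly summable, so only an $\ell^{\infty}$-in-$j$ bound at the endpoint $2^{-j\sigma}$ is required, which matches the $\ell^{\infty}$ control provided by $g\in\dot B^{-\sigma}_{p_{2},\infty}$. For $T_{f}g$, write $\|\ddj(T_{f}g)\|_{L^{q}}\lesssim\sum_{k\sim j}\|\dot S_{k-1}f\|_{L^{p^{**}}}\|\ddk g\|_{L^{p_{2}}}$ with $1/q=1/p^{**}+1/p_{2}$, and use $\dot B^{\sigma}_{p_{1},1}\hookrightarrow L^{p^{**}}$ (permitted by $\sigma\le d/p_{1}$) to absorb $f$. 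For $T_{g}f$, exploit the negative regularity of $g$ via Bernstein, $\|\dot S_{k-1}g\|_{L^{p^{**}}}\lesssim 2^{k(d/p^{**}-d/p_{2}+\sigma)}\|g\|_{\dot B^{-\sigma}_{p_{2},\infty}}$, whose exponent is nonnegative precisely when $\sigma\ge d/p_{1}+d/p_{2}-d$, exactly the lower bound assumed; this then yields the correct $2^{-j\sigma}$ weight after pairing with $\|\ddk f\|_{L^{p_{1}}}$ and using $f\in\dot B^{\sigma}_{p_{1},1}$. The remainder is controlled by the ball-localization and the hypothesis $\sigma\le\min(d/p_{1},d/p_{2})$, which ensures the Lebesgue exponents that appear after Hölder are admissible. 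The main obstacle I expect is this third inequality: reconciling the weak target norm, the negative-regularity input $g$, and the sharp lower bound on $\sigma$ (which enters only through the Bernstein exponent for $T_{g}f$) requires careful bookkeeping of the exponents piece by piece; the first two estimates, by contrast, are classical applications of the paraproduct calculus where the main subtlety is the endpoint $\sigma_{i}=d/p_{i}$, handled by the embeddings of Proposition~\ref{Prop2.1}.
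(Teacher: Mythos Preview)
The paper does not prove Proposition~\ref{Prop2.2}; it is quoted from \cite{BCD,DX}, so there is no in-paper argument to compare against. Your Bony paraproduct approach is exactly the standard route taken in those references, and the sketch is essentially correct.

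One small bookkeeping remark on the third estimate: the lower bound $\sigma\ge d/p_{1}+d/p_{2}-d$ is nothing but the requirement $1/q\le 1$ (so that the target space $\dot B^{-\sigma}_{q,\infty}$ makes sense), not a constraint arising specifically from the Bernstein exponent in the $T_{g}f$ piece. Indeed, for $T_{g}f$ one may pair $\|\dot S_{k-1}g\|_{L^{p_{2}}}\lesssim 2^{k\sigma}\|g\|_{\dot B^{-\sigma}_{p_{2},\infty}}$ (using only $\sigma>0$) with $\|\ddk f\|_{L^{r}}\lesssim 2^{k\sigma}\|\ddk f\|_{L^{p_{1}}}$ where $1/r=1/p_{1}-\sigma/d$ (using only $0\le\sigma\le d/p_{1}$), and H\"older then lands in $L^{q}$. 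So your identification of where each hypothesis enters is slightly off, but the argument itself goes through.
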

Proposition \ref{Prop2.2} is not enough to bound the possible case $p>d$ in the proof of Theorem \ref{Thm1.2}, so we have the following non-classical product estimates.
\begin{prop}\label{Prop2.3} (\cite{DX}) Let $k_{0}\in\Z,$ and denote $z^{\ell}\triangleq\dot S_{k_{0}}z$, $z^{h}\triangleq z-z^{\ell}$ and, for any $s\in\R$,
$$
\left\|z\right\|_{\dot B^{s}_{2,\infty}}^{\ell}\triangleq\sup_{k\leq k_{0}}2^{ks} \left\|\ddk z\right\|_{L^2}.
$$
There exists a universal integer $N_{0}$ such that  for any $2\leq p\leq 4$ and $\sigma>0,$ we have
\begin{eqnarray}\label{Eq:2-5}
&&\left\|f g^{h}\right\|_{\dot {B}^{-s_{0}}_{2,\infty}}^{\ell}\leq C \left(\left\|f\right\|_{\dot {B}^{\sigma}_{p,1}}+\left\|\dot S_{k_{0}+N_{0}}f\right\|_{L^{{p}^{*}}}\right)\left\|g^{h}\right\|_{\dot{B}^{-\sigma}_{p,\infty}}\\\label{Eq:2-6}
&&\left\|f^{h} g\right\|_{\dot {B}^{-s_{0}}_{2,\infty}}^{\ell}
\leq C \left(\left\|f^{h}\right\|_{\dot{B}^{\sigma}_{p,1}}+\left\|\dot{S}_{k_{0}+N_{0}}f^{h}\right\|_{L^{p^{*}}}\right)\left\|g\right\|_{\dot {B}^{-\sigma}_{p,\infty}}
\end{eqnarray}
with  $s_{0}\triangleq \frac{2d}{p}-\frac {d}{2}$ and $\frac1{p^{*}}\triangleq\frac{1}{2}-\frac{1}{p},$
and $C$ depending only on $k_{0}$, $d$ and $\sigma$.
\end{prop}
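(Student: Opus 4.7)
The plan is to apply Bony's paraproduct decomposition
\begin{equation*}
 f g^{h} \;=\; T_{f} g^{h} + T_{g^{h}} f + R(f, g^{h}),
\end{equation*}
and to estimate each of the three pieces separately in $\|\cdot\|^{\ell}_{\dot B^{-s_{0}}_{2,\infty}}$; the second inequality of the proposition will then follow by a symmetric argument with the roles of $f$ and $g$ exchanged.

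First I would exploit that $\ddj g^{h}$ vanishes for $j<k_{0}-1$: this confines the Fourier supports of both paraproducts to annuli at scales $\sim 2^{j}$ with $j\gtrsim k_{0}$, so only finitely many low-frequency blocks $\ddk$ with $k\in[k_{0}-c,k_{0}]$ can contribute to the supremum in $\|\cdot\|^{\ell}_{\dot B^{-s_{0}}_{2,\infty}}$. For those blocks I choose the universal integer $N_{0}$ large enough that the Fourier support of $\dot S_{j-1}f$ lies where $\chi(2^{-(k_{0}+N_{0})}\cdot)\equiv 1$, whence $\dot S_{j-1}f=\dot S_{j-1}\dot S_{k_{0}+N_{0}}f$, and likewise $\ddj f=\ddj(\dot S_{k_{0}+N_{0}}f)$. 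The basic pairing is H\"older's inequality via the identity $1/p^{*}+1/p=1/2$, which produces the $L^{p^{*}}$-factor in the right-hand side of the proposition. The term $T_{g^{h}}f$ is handled in the same spirit, with the (finite-block) dyadic sum $\dot S_{j-1}g^{h}$ converted to $L^{p^{*}}$ via Bernstein using $p\leq p^{*}$.

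The main work lies in the remainder $R(f,g^{h})=\sum_{j}\ddj f\cdot \widetilde{\dot\Delta}_{j} g^{h}$, which \emph{does} produce genuine low-frequency content. I would split $f=\dot S_{k_{0}+N_{0}}f + f^{H}$. The ``low--high'' piece $R(\dot S_{k_{0}+N_{0}}f,g^{h})$ reduces, by frequency localization, to a finite sum over $j\in[k_{0}-2,k_{0}+N_{0}]$ and is controlled by the $L^{p^{*}}$-norm via H\"older, as above. For the ``high--high'' piece $R(f^{H},g^{h})$ I would bound each summand by
\begin{equation*}
 \|\ddj f^{H}\cdot \widetilde{\dot\Delta}_{j} g^{h}\|_{L^{p/2}}\leq\|\ddj f^{H}\|_{L^{p}}\|\widetilde{\dot\Delta}_{j} g^{h}\|_{L^{p}},
\end{equation*}
pair the weights $2^{-j\sigma}$ and $2^{j\sigma}$ coming from $\dot B^{\sigma}_{p,1}$ and $\dot B^{-\sigma}_{p,\infty}$ (the $j$-sum closes by the very definition of the $\dot B^{\sigma}_{p,1}$-norm), and then invoke Bernstein to gain the scaling weight,
\begin{equation*}
 \|\ddk F\|_{L^{2}}\lesssim 2^{kd(2/p-1/2)}\|F\|_{L^{p/2}}=2^{k s_{0}}\|F\|_{L^{p/2}}.
\end{equation*}
This step requires exactly $p/2\leq 2$, i.e.\ $p\leq 4$, and the factor $2^{k s_{0}}$ precisely cancels the $2^{-k s_{0}}$ in the target norm.

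The delicate point I expect to occupy most of the work is precisely this high--high remainder: it is the only place where the $j$-sum is genuinely infinite, and the bookkeeping required so that Bernstein (forcing $p\leq 4$) and H\"older (via $1/p^{*}+1/p=1/2$) together produce a gain $2^{k s_{0}}$ that exactly compensates the loss $2^{-k s_{0}}$ pins down both the exponent $s_{0}=2d/p-d/2$ and the range $2\leq p\leq 4$ in the hypothesis. The paraproducts and the low--high remainder are comparatively routine because only finitely many blocks are involved, and this is also where the $L^{p^{*}}$-norm of $\dot S_{k_{0}+N_{0}}f$ naturally enters.
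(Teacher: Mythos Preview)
The paper does not supply its own proof of this proposition: it is quoted verbatim from \cite{DX} and used as a black box, so there is no argument here to compare against. Your sketch is the standard route (and is essentially what \cite{DX} does): Bony decomposition, the two paraproducts reduce to finitely many blocks because $g^{h}$ carries no frequencies below $k_{0}$ while the target norm only sees $k\leq k_{0}$, and the remainder is split into a finite low--high part plus a high--high tail handled by H\"older into $L^{p/2}$ followed by Bernstein $L^{p/2}\hookrightarrow L^{2}$, which both fixes $s_{0}=d(2/p-1/2)$ and forces $p\leq 4$. The outline is correct and the identification of the high--high remainder as the crux is accurate; the only point to tighten when writing it out is the bookkeeping for $T_{g^{h}}f$, where you should make explicit that the (finite) partial sum $\dot S_{j-1}g^{h}$ is controlled in $L^{p}$ by $\|g^{h}\|_{\dot B^{-\sigma}_{p,\infty}}$ up to a harmless constant depending on $k_{0}$ and $\sigma$, while $\ddj f$ sits in $L^{p^{*}}$ via $\ddj f=\ddj\dot S_{k_{0}+N_{0}}f$.
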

System \eqref{Eq:1-6} also involves compositions of functions (through $\pi_{1}(a)$, $\pi_{2}(a)$, $\tilde{\lambda}(a)$ and $\tilde{\mu}(a)$) and they
are bounded according to the following conclusion.
\begin{prop}\label{Prop2.4}
Let $F:\R\rightarrow\R$ be  smooth with $F(0)=0$.
For  all  $1\leq p,r\leq\infty$ and $\sigma>0$ we have
$F(f)\in \dot {B}^{\sigma}_{p,r}\cap L^{\infty}$  for  $f\in \dot {B}^{\sigma}_{p,r}\cap L^{\infty},$  and
$$\left\|F(f)\right\|_{\dot B^\sigma_{p,r}}\leq C\left\|f\right\|_{\dot B^\sigma_{p,r}}$$
with $C$ depending only on $\left\|f\right\|_{L^{\infty}}$, $F'$ (and higher derivatives), $\sigma$, $p$ and $d$.
\medbreak
In the case $\sigma>-\min\left(\frac {d}{p},\frac {d}{p'}\right)$ then $f\in\dot {B}^{\sigma}_{p,r}\cap\dot {B}^{\frac {d}{p}}_{p,1}$
implies that $F(f)\in \dot {B}^{\sigma}_{p,r}\cap\dot {B}^{\frac {d}{p}}_{p,1}$, and we have
$$\left\|F(f)\right\|_{\dot B^{\sigma}_{p,r}}\leq C(1+\left\|f\right\|_{\dot {B}^{\frac {d}{p}}_{p,1}})\left\|f\right\|_{\dot {B}^{\sigma}_{p,r}}.$$
\end{prop}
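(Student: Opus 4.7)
The plan is to follow the standard paradifferential approach to composition estimates, essentially the one developed in Chapter~2 of \cite{BCD}. The starting point for both estimates is the telescopic representation
$$F(f)=\sum_{k\in\Z}\bigl(F(\dot S_{k+1}f)-F(\dot S_{k}f)\bigr)=\sum_{k\in\Z}m_{k}\,\ddk f,\qquad m_{k}\triangleq\int_{0}^{1}F'\bigl(\dot S_{k}f+\tau\ddk f\bigr)\,d\tau,$$
which makes sense because $F(0)=0$ and $\dot S_{k}f\to 0$ as $k\to -\infty$ in $L^{\infty}$ (by \eqref{Eq:2-2}, which holds whenever $f\in \dot B^{\sigma}_{p,r}\cap L^{\infty}$ or, via Proposition~\ref{Prop2.1}, $f\in\dot B^{d/p}_{p,1}$). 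The key spectral fact is that the Fourier support of $\dot S_{k}f+\tau\ddk f$ is contained in $\{|\xi|\lesssim 2^{k}\}$, hence so is the support of $m_{k}\ddk f$ up to an enlargement by a fixed constant; therefore $\dj(m_{k}\ddk f)$ vanishes unless $k\geq j-N_{0}$ for some universal $N_{0}$.

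\textbf{First estimate ($\sigma>0$, control by $L^{\infty}$).} Since $\dot S_{k}f$ and $\ddk f$ are uniformly bounded by $C\|f\|_{L^{\infty}}$, I get $\|m_{k}\|_{L^{\infty}}\leq M$ with $M=\sup_{|y|\leq C\|f\|_{L^{\infty}}}|F'(y)|$. Applying $\dj$ and Bernstein's inequality yields
$$2^{j\sigma}\|\dj F(f)\|_{L^{p}}\lesssim M\sum_{k\geq j-N_{0}}2^{(j-k)\sigma}\,2^{k\sigma}\|\ddk f\|_{L^{p}},$$
and since $\sigma>0$ the discrete convolution kernel $2^{(j-k)\sigma}\mathbf{1}_{k\geq j-N_{0}}$ is in $\ell^{1}(\Z)$, so Young's inequality in $\ell^{r}$ gives the desired bound $\|F(f)\|_{\dot B^{\sigma}_{p,r}}\leq CM\|f\|_{\dot B^{\sigma}_{p,r}}$. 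The $L^{\infty}$ bound on $F(f)$ is elementary.

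\textbf{Second estimate (possibly negative $\sigma$).} Now $L^{\infty}$ control is not assumed, so I replace it by the embedding $\dot B^{d/p}_{p,1}\hookrightarrow L^{\infty}$ (Proposition~\ref{Prop2.1}), which furnishes a uniform bound $\|m_{k}\|_{L^{\infty}}\leq M'=\sup_{|y|\leq C\|f\|_{\dot B^{d/p}_{p,1}}}|F'(y)|$. However, the direct telescopic argument breaks down when $\sigma\leq 0$ because $2^{(j-k)\sigma}\mathbf{1}_{k\geq j-N_{0}}$ ceases to be summable. The remedy is to rewrite each product $m_{k}\ddk f$ through Bony's paraproduct decomposition $fg=T_{f}g+T_{g}f+R(f,g)$ and estimate the three pieces separately using the product laws of Proposition~\ref{Prop2.2}. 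The $T_{m_{k}}(\ddk f)$ pieces combine to a paraproduct that contributes at most $M'\|f\|_{\dot B^{\sigma}_{p,r}}$, while the other two pieces, where at least one factor carries $\dot B^{d/p}_{p,1}$ regularity (inherited from $m_{k}$ via $F\in C^{\infty}$ and Proposition~\ref{Prop2.2} again), produce the factor $\|f\|_{\dot B^{d/p}_{p,1}}$; the restriction $\sigma>-\min(d/p,d/p')$ is precisely what is needed for the remainder $R$ to be controlled in $\dot B^{\sigma}_{p,r}$.

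\textbf{Main obstacle.} The technical heart of the argument is the second estimate: verifying that $m_{k}$ enjoys the regularity $\|m_{k}\|_{\dot B^{d/p}_{p,1}}\lesssim \|f\|_{\dot B^{d/p}_{p,1}}$ uniformly in $k$ (this uses the first estimate applied to the smooth function $F'$, together with an induction/bootstrap on the regularity index), and then carefully balancing the paraproduct and remainder contributions so that the resulting bound is linear in $\|f\|_{\dot B^{\sigma}_{p,r}}$ with coefficient $C(1+\|f\|_{\dot B^{d/p}_{p,1}})$. Everything else is bookkeeping in Littlewood-Paley decomposition.
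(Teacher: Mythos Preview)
The paper does not prove Proposition~\ref{Prop2.4}; it is quoted as a standard preliminary result (see \cite{BCD}, Chap.~2), so there is no in-house proof to compare against and the only question is whether your sketch stands on its own.

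Your first estimate has a real gap. You assert that because $\dot S_{k}f+\tau\ddk f$ has Fourier support in $\{|\xi|\lesssim 2^{k}\}$, the product $m_{k}\ddk f$ is spectrally supported in the same ball up to a fixed enlargement, and hence $\dot\Delta_{j}(m_{k}\ddk f)=0$ for $k<j-N_{0}$. This is false for general smooth $F$: the factor $m_{k}=\int_{0}^{1}F'(\dot S_{k}f+\tau\ddk f)\,d\tau$ is a \emph{nonlinear} function of a band-limited input, and nonlinear maps do not preserve compact spectral support (take $F'(y)=e^{y}$, or even $F'(y)=y^{n}$ with $n$ unbounded). So the terms with $k<j-N_{0}$ do not vanish, and your convolution argument only accounts for half of the sum.

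The repair is to keep those terms and exploit that $m_{k}$, while not band-limited, behaves like a band-limited function at the level of derivative bounds: Fa\`a di Bruno combined with Bernstein on $\dot S_{k}f+\tau\ddk f$ yields $\|\nabla^{N}m_{k}\|_{L^{\infty}}\leq C_{N}2^{Nk}$ for every $N$, with $C_{N}$ depending on $\|f\|_{L^{\infty}}$ and on $F',\ldots,F^{(N+1)}$. Then for $k<j-N_{0}$ one writes
$$\|\dot\Delta_{j}(m_{k}\ddk f)\|_{L^{p}}\lesssim 2^{-jN}\|\nabla^{N}(m_{k}\ddk f)\|_{L^{p}}\lesssim 2^{(k-j)N}\|\ddk f\|_{L^{p}},$$
and choosing any $N>\sigma$ makes the complementary convolution kernel $2^{(j-k)(\sigma-N)}$ (for $k<j-N_{0}$) summable, after which your Young-in-$\ell^{r}$ step goes through. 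Your outline for the second estimate (paralinearization plus the product laws under the restriction $\sigma>-\min(d/p,d/p')$) is in the right spirit, but since it invokes the first estimate to control $m_{k}$ in $\dot B^{d/p}_{p,1}$, the same correction is needed there as well.
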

Let us now  recall the following classical \emph{Bernstein inequality}:
\begin{equation}\label{Eq:2-7}
\left\|D^{k}f\right\|_{L^{b}}
\leq C^{1+k} \lambda^{k+d\left(\frac{1}{a}-\frac{1}{b}\right)}\left\|f\right\|_{L^{a}}
\end{equation}
that holds for all function $f$ such that $\mathrm{Supp}\,\mathcal{F}f\subset\left\{\xi\in \R^{d}: \left|\xi\right|\leq R\lambda\right\}$ for some $R>0$
and $\lambda>0$, if $k\in\N$ and $1\leq a\leq b\leq\infty$.

More generally, if we assume $f$ to satisfy $\mathrm{Supp}\,\mathcal{F}f\subset \left\{\xi\in \R^{d}:
R_{1}\lambda\leq\left|\xi\right|\leq R_{2}\lambda\right\}$ for some $0<R_{1}<R_{2}$  and $\lambda>0$,
then for any smooth  homogeneous of degree $m$ function $A$ on $\R^d\setminus\{0\}$ and $1\leq a\leq\infty,$ we have
(see e.g. Lemma 2.2 in \cite{BCD}):
 \begin{equation}\label{Eq:2-8}
\left\|A(D)f\right\|_{L^{a}}\approx\lambda^{m}\left\|f\right\|_{L^{a}}.
\end{equation}
An obvious  consequence of \eqref{Eq:2-7} and \eqref{Eq:2-8} is that
$\left\|D^{k}f\right\|_{\dot{B}^{s}_{p, r}}\thickapprox\left \|f\right\|_{\dot{B}^{s+k}_{p, r}}$ for all $k\in\N.$

We also need the following nonlinear generalization of \eqref{Eq:2-8} (see Lemma 8 in \cite{DR3}):
\begin{prop}\label{Prop2.5}
If $\mathrm{Supp}\,\mathcal{F}f\subset \left\{\xi\in \R^{d}:
R_{1}\lambda\leq\left|\xi\right|\leq R_{2}\lambda\right\}$ then there exists $c$ depending only on $d,$ $R_1$  and $R_2$
so that for all $1<p<\infty$,
$$c\lambda^{2}\left(\frac{p-1}{p}\right)\int_{\R^{d}}\left|f\right|^{p}dx\leq (p-1)\int_{\R^d}\left|\nabla f\right|^{2}\left|f\right|^{p-2}dx
=-\int_{\R^{d}}\Delta f\left|f\right|^{p-2}f\,dx.
$$
\end{prop}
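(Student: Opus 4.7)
The statement decomposes into two parts: the equality on the right, which is a pure integration-by-parts identity, and the Bernstein-type lower bound on the left, which requires exploiting the spectral localization of $f$. I treat them separately.

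For the equality, note that since $\mathcal{F}f$ is compactly supported and bounded away from the origin, $f$ is a Schwartz function, so integration by parts produces no boundary terms:
\[
-\int_{\mathbb{R}^d}\Delta f\cdot|f|^{p-2}f\,dx \;=\; \int_{\mathbb{R}^d}\nabla f\cdot\nabla\bigl(|f|^{p-2}f\bigr)\,dx.
\]
For real-valued $f$, the chain rule gives $\nabla(|f|^{p-2}f)=(p-1)|f|^{p-2}\nabla f$ pointwise a.e.\ (verified piecewise on $\{f>0\}$ and $\{f<0\}$), which yields the stated equality. In the range $1<p<2$ the weight $|f|^{p-2}$ is singular on $\{f=0\}$, so I would regularize by replacing $|f|^{p-2}f$ with $(|f|^2+\varepsilon)^{(p-2)/2}f$, derive the corresponding identity, and pass to the limit $\varepsilon\downarrow 0$ via dominated convergence (using the Schwartz decay of $f$ and its derivatives).

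For the inequality, I would use the heat-semigroup method. Set $u(t,x)\triangleq e^{t\Delta}f(x)$. Because $\widehat{u}(t,\xi)=e^{-t|\xi|^2}\widehat{f}(\xi)$ still lives in the annulus $\{R_1\lambda\le|\xi|\le R_2\lambda\}$, each $u(t,\cdot)$ inherits the spectral localization. Applying the equality at every time $t$ and differentiating gives the dissipation identity
\[
\frac{d}{dt}\|u(t)\|_{L^p}^p \;=\; -\,p(p-1)\int_{\mathbb{R}^d}|\nabla u|^2|u|^{p-2}\,dx.
\]
If I can establish the heat-kernel decay $\|u(t)\|_{L^p}^p\le e^{-pc\lambda^2 t}\|f\|_{L^p}^p$ with some $c>0$ depending only on $d,R_1,R_2$, then differentiating at $t=0^+$ yields $(p-1)\int|\nabla f|^2|f|^{p-2}dx\ge c\lambda^2\|f\|_{L^p}^p$, which is stronger than the stated bound since $(p-1)/p\le 1$. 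The decay in turn would follow by picking $\psi\in C_c^\infty(\mathbb{R}^d\setminus\{0\})$ with $\psi\equiv 1$ on $\{R_1\le|\eta|\le R_2\}$, writing $u(t)=K_{t,\lambda}\ast f$ with
\[
K_{t,\lambda}(x)\;=\;\lambda^d\,\mathcal{F}^{-1}\bigl[\psi(\eta)\,e^{-t\lambda^2|\eta|^2}\bigr](\lambda x),
\]
and using $|\eta|^2\gtrsim R_1^2$ on $\operatorname{supp}\psi$ together with Schwartz-type estimates on $\mathcal{F}^{-1}\bigl[\psi(\eta)e^{-s|\eta|^2}\bigr]$ to obtain $\|K_{t,\lambda}\|_{L^1}\lesssim e^{-c\lambda^2 t}$.

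The main obstacle I anticipate is the constant mismatch at $t=0$: the kernel estimate carries a prefactor $C\ge1$ that does not vanish as $t\to 0^+$, so one cannot naively differentiate a pointwise exponential majorant that is sharp at the origin. I would bypass this by combining the kernel bound (sharp for $t$ large) with the trivial $L^p$-contractivity $\|e^{t\Delta}\|_{L^p\to L^p}\le 1$ (sharp for $t$ small), and then recovering the infinitesimal dissipation rate from the integrated identity
\[
\|f\|_{L^p}^p\;=\;p(p-1)\int_0^\infty\!\!\int_{\mathbb{R}^d}|\nabla u(s)|^2|u(s)|^{p-2}\,dx\,ds,
\]
which is valid because $\|u(t)\|_{L^p}\to 0$ as $t\to\infty$ (by Plancherel decay of $\|u(t)\|_{L^2}$ and Bernstein's inequality inside the annulus). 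The slack factor $(p-1)/p\le 1$ on the left side of the stated inequality provides exactly the room needed to absorb the $p$-dependent constants produced along this dualization.
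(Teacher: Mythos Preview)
The paper does not prove Proposition~2.5 itself; it simply cites Lemma~8 of Danchin~[DR3]. Your heat-semigroup strategy---setting $u(t)=e^{t\Delta}f$, deriving the dissipation identity $\frac{d}{dt}\|u(t)\|_{L^p}^p=-p(p-1)\int|\nabla u|^2|u|^{p-2}\,dx$, and invoking the annular kernel bound $\|e^{t\Delta}f\|_{L^p}\le Ce^{-c\lambda^2 t}\|f\|_{L^p}$---is indeed the framework of that reference, and your treatment of the equality (integration by parts with $\varepsilon$-regularization for $1<p<2$) is correct.

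The gap is in your resolution of the constant-mismatch obstacle. The integrated identity
\[
\|f\|_{L^p}^p \;=\; p(p-1)\int_0^\infty h(s)\,ds,\qquad h(s)\triangleq\int_{\R^d}|\nabla u(s)|^2|u(s)|^{p-2}\,dx,
\]
controls only the time-average of $h$, not $h(0)$. To recover a lower bound on $h(0)$ from this you must pass information from the integral back to the endpoint---for instance by establishing that $h$ is nonincreasing (equivalently that $t\mapsto\|u(t)\|_{L^p}^p$ is convex), or by proving a pointwise decay $h(s)\le C'e^{-c'\lambda^2 s}h(0)$ directly. You do neither; the phrases ``dualization'' and ``the slack factor $(p-1)/p$ provides exactly the room needed'' are not arguments. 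In fact the slack factor tends to~$1$ as $p\to\infty$ and so provides essentially no room there, whereas a naive combination of the kernel bound (which carries $C^p$) with H\"older produces constants that blow up in $p$. The passage from the time-integrated dissipation to the instantaneous rate at $t=0$ with a $p$-independent constant is precisely the nontrivial core of Danchin's lemma, and your plan does not supply it.
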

A time-dependent version of  the following  commutator estimate has been used  in the second step of the proof of Theorem \ref{Thm1.2}.
\begin{prop}\label{Prop2.6} (\cite{DX})
Let $1\leq p,\,p_{1}\leq\infty$ and
$$
-\min\left(\frac{d}{p_{1}},\frac{d}{p'}\right)<\sigma\leq1+\min\left(\frac {d}{p},\frac{d}{p_{1}}\right).
$$
There exists a constant $C>0$ depending only on $\sigma$ such that for all $k\in\Z$ and $\ell\in\left\{1,\cdots,d\right\}$, we have
$$
\left\|\left[v\cdot\nabla,\d_\ell\dot{\Delta}_{k}\right]a\right\|_{L^{p}}\leq
Cc_{k}2^{-k\left(\sigma-1\right)}\left\|\nabla v\right\|_{\dot{B}^{\frac{d}{p_{1}}}_{p_{1},1}}\left\|\nabla a\right\|_{\dot{B}^{\sigma-1}_{p,1}},
$$
where the commutator
$\left[\cdot,\cdot\right]$ is defined by $\left[f,g\right]=fg-gf$ and $(c_{k})_{k\in\Z}$ denotes
a sequence such that $\left\|(c_{k})\right\|_{\ell^{1}}\leq 1$.
\end{prop}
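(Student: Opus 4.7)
The plan is to apply Bony's homogeneous paraproduct decomposition to $v^m\partial_m a$ (with implicit summation on $m$),
$$v\cdot\nabla a = T_{v^m}\partial_m a + T_{\partial_m a}v^m + R(v^m,\partial_m a),$$
and to split the commutator into a genuine paraproduct commutator plus two ``cancellation-free'' brackets:
$$[v\cdot\nabla,\partial_\ell\dot\Delta_k]a = [T_{v^m},\partial_\ell\dot\Delta_k]\partial_m a + \bigl(T_{\partial_m\partial_\ell\dot\Delta_k a}v^m - \partial_\ell\dot\Delta_k T_{\partial_m a}v^m\bigr) + \bigl(R(v^m,\partial_m\partial_\ell\dot\Delta_k a) - \partial_\ell\dot\Delta_k R(v^m,\partial_m a)\bigr).$$
Only the first piece carries cancellation; the other two must be handled by direct paraproduct- and remainder-type bounds in Besov spaces.

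For the first piece, Fourier-support considerations restrict the paraproduct sum to indices $|j-k|\leq N_0$ for some universal $N_0$, and each surviving contribution can be written in kernel form
$$[\dot S_{j-1}v^m,\partial_\ell\dot\Delta_k]\partial_m\dot\Delta_j a(x) = \int \Phi_k(x-y)\bigl(\dot S_{j-1}v^m(y)-\dot S_{j-1}v^m(x)\bigr)\partial_m\dot\Delta_j a(y)\,dy,$$
where $\Phi_k$ is the convolution kernel of $\partial_\ell\dot\Delta_k$. A first-order Taylor expansion of $\dot S_{j-1}v^m$ produces a factor $(y-x)$ which, absorbed into $\Phi_k(x-y)$, yields a kernel whose $L^1$-norm is bounded uniformly in $k$; this effectively trades the derivative $\partial_\ell$ for $\nabla v$ at the cost of one power of $2^{-k}$. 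Hölder's inequality combined with the embedding $\dot B^{d/p_1}_{p_1,1}\hookrightarrow L^\infty$ applied to $\nabla v$ then delivers, after $\ell^1$-summation over $|j-k|\leq N_0$, the desired bound $c_k\,2^{-k(\sigma-1)}\|\nabla v\|_{\dot B^{d/p_1}_{p_1,1}}\|\nabla a\|_{\dot B^{\sigma-1}_{p,1}}$.

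The two cancellation-free brackets are controlled by standard paraproduct and remainder bounds in the Besov scale, Bernstein's inequality being used to redistribute derivatives between the two factors and to convert gains in low frequency into the right power of $2^{k}$. The upper endpoint $\sigma\leq 1+\min(d/p,d/p_1)$ arises from $\ell^{1}$-summability in the second paraproduct bracket, while the lower endpoint $\sigma>-\min(d/p_1,d/p')$ is the summation threshold for the remainder bracket (where one must distribute the extra derivative $\partial_\ell$ onto whichever factor keeps the Besov index admissible). Intersecting the three windows produces exactly the range stated in the proposition.

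The main obstacle is the bookkeeping, in the sense that each of the three pieces imposes its own admissibility window on $(\sigma,p,p_{1})$ and these must be shown to meet exactly in the range given; equally delicate is that one must genuinely exploit the Taylor cancellation in the commutator piece, since a plain product estimate would lose one derivative and destroy the critical scaling $2^{-k(\sigma-1)}$. The analogous commutator estimate for the compressible Navier–Stokes system is proved in \cite{DX}, and the present bound follows the same blueprint up to a trivial relabelling.
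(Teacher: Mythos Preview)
The paper does not supply its own proof of this proposition: it is quoted verbatim from \cite{DX} with no argument given here. Your sketch via Bony's paraproduct decomposition, with the first bracket handled by the Taylor/kernel cancellation trick and the remaining two by direct paraproduct and remainder bounds, is exactly the standard route and is the one carried out in \cite{DX}; you even acknowledge this at the end. So your proposal is correct and aligned with the cited source, though strictly speaking there is nothing in the present paper to compare against beyond the citation.
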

Let us finally recall the following parabolic regularity estimate for the
heat equation to end this section.
\begin{prop}\label{Prop2.7}
Let $\sigma\in \R$, $(p,r)\in \left[1,\infty\right]^{2}$ and $1\leq \rho_{2}\leq\rho_{1}\leq\infty$. Let $u$ satisfy
$$\left\{\begin{array}{lll}
\d_{t}u-\mu\Delta u=f,\\
u_{|t=0}=u_{0}.
\end{array}
\right.$$
Then for all $T>0$ the following a priori estimate is fulfilled:
\begin{equation}\label{Eq:2-9}\mu^{\frac1{\rho_1}}\left\|u\right\|_{\tilde L_{T}^{\rho_1}(\dot B^{\sigma+\frac{2}{\rho_1}}_{p,r})}\lesssim
\left\|u_{0}\right\|_{\dot {B}^{\sigma}_{p,r}}+\mu^{\frac{1}{\rho_{2}}-1}\left\|f\right\|_{\tilde L^{\rho_{2}}_{T}(\dot {B}^{\sigma-2+\frac{2}{\rho_{2}}}_{p,r})}.
\end{equation}
\end{prop}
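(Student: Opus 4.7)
The plan is to run a standard Littlewood--Paley dyadic analysis, estimating each frequency block separately and then resumming. Setting $u_{k}\triangleq\ddk u$ and $f_{k}\triangleq\ddk f$, since $\ddk$ is a Fourier multiplier it commutes with $\partial_{t}-\mu\Delta$, so $u_{k}$ satisfies $\partial_{t}u_{k}-\mu\Delta u_{k}=f_{k}$ with $u_{k}|_{t=0}=\ddk u_{0}$. Duhamel's formula then yields
$$u_{k}(t)=e^{\mu t\Delta}\ddk u_{0}+\int_{0}^{t}e^{\mu(t-s)\Delta}f_{k}(s)\,ds.$$

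The crucial input is the frequency-localised exponential decay of the heat semigroup: since $\mathrm{Supp}\,\mathcal{F}\ddk g$ lies in an annulus of size $\sim 2^{k}$, there exists a universal $c>0$ such that
$$\|e^{\mu t\Delta}\ddk g\|_{L^{p}}\lesssim e^{-c\mu 2^{2k}t}\|\ddk g\|_{L^{p}},\qquad 1\leq p\leq\infty.$$
For $p\in(1,\infty)$ this follows by testing the localised equation against $|u_{k}|^{p-2}u_{k}$, integrating in $x$ and invoking Proposition \ref{Prop2.5} to extract the spectral gap $c\mu 2^{2k}$; the endpoints $p=1,\infty$ are handled via the explicit representation of $e^{t\Delta}$ cut off to the annulus, whose $L^{1}$ kernel norm decays like $e^{-c\mu 2^{2k}t}$. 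Applying $\|\cdot\|_{L^{\rho_{1}}_{T}L^{p}}$ to the Duhamel expression and using Young's convolution inequality in time with $1+1/\rho_{1}=1/a+1/\rho_{2}$, together with $\|e^{-c\mu 2^{2k}t}\|_{L^{q}(0,T)}\lesssim(\mu 2^{2k})^{-1/q}$ for $q=\rho_{1}$ and $q=a$, gives
$$\|u_{k}\|_{L^{\rho_{1}}_{T}(L^{p})}\lesssim(\mu 2^{2k})^{-1/\rho_{1}}\|\ddk u_{0}\|_{L^{p}}+(\mu 2^{2k})^{-(1+1/\rho_{1}-1/\rho_{2})}\|f_{k}\|_{L^{\rho_{2}}_{T}(L^{p})}.$$

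Multiplying through by $\mu^{1/\rho_{1}}2^{k(\sigma+2/\rho_{1})}$, the $\mu$-powers collapse to $\mu^{0}$ on the initial-data term and $\mu^{1/\rho_{2}-1}$ on the forcing term, while the $2^{k}$-weights collapse to $2^{k\sigma}$ and $2^{k(\sigma-2+2/\rho_{2})}$ respectively, exactly matching the indices on the right-hand side of \eqref{Eq:2-9}. Taking the $\ell^{r}(\Z)$ norm in $k$ then produces the Chemin--Lerner norm $\|u\|_{\tilde L^{\rho_{1}}_{T}(\dot B^{\sigma+2/\rho_{1}}_{p,r})}$ on the left-hand side, since the $t$-integration was performed before the $\ell^{r}$-summation. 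The main subtlety is not analytical but purely the bookkeeping of $\mu$- and $2^{k}$-exponents; retaining the norm in Chemin--Lerner form (rather than the standard $L^{\rho_{1}}_{T}(\dot B^{s}_{p,r})$ form) is essential so that no Minkowski inequality loss occurs when $r>\rho_{1}$. No commutator, product, or composition estimate enters --- the argument is entirely linear and semigroup theoretic.
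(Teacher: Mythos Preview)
Your argument is correct and is precisely the standard proof of this parabolic smoothing estimate in Chemin--Lerner spaces. Note, however, that the paper itself does not give a proof of Proposition~\ref{Prop2.7}: it is stated in the preliminaries as a known result (``Let us finally recall the following parabolic regularity estimate\ldots''), with the implicit reference being \cite{BCD}. Your Littlewood--Paley localisation, followed by the exponential decay of the heat semigroup on dyadic annuli and Young's inequality in the time variable, is exactly the argument one finds there, so there is nothing to compare.
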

\begin{rem} \label{Rem2.2}
The solutions to the following \emph{Lam\'e system}
\begin{equation}\label{Eq:2-10}
\left\{\begin{array}{lll}\d_tu-\mu\Delta u-\left(\lambda+\mu\right)\nabla \div u=f,\\
u_{|t=0}=u_{0},
\end{array}
\right.
\end{equation}
where $\lambda$ and $\mu$ are constant coefficients such that $\mu>0$ and $\lambda+2\mu>0,$
also fulfill \eqref{Eq:2-9} (up to the dependence w.r.t. the viscosity).
Indeed: both $\cP u$ and $\cQ u$ satisfy a heat equation,
as may be easily observed by applying $\cP$ and $\cQ$ to \eqref{Eq:2-10}.
\end{rem}

\section{The proof of time-decay estimates} \setcounter{equation}{0}
This section is devoted to the proofs of main results taking for granted the global-in-time existence result of Theorem \ref{Thm1.1}.
Regarding Theorem \ref{Thm1.2}, we shall follow from the fashion of joint work with Danchin \cite{DX} and proceed the proof in three steps, according to three terms of the
time-weighted functional $\mathcal{D}_{p}$. In the present paper, we deal with the strong coupling between the magnetic field and fluid dynamics. Precisely, the first step is dedicated to the proof of decay estimates for the low frequency part of $(a,u,H)$. By using the energy approaches for partially dissipative first-order symmetric systems, which was initiated by Godunov \cite{GSK} and developed by Kawashima et al. \cite{SK} (see also the survey paper \cite{DR2} due to Danchin), we construct the low frequency decay properties of semi-group defined by the left-hand side of system \eqref{Eq:1-6}, which enables us to pay less attention on the spectral analysis on the linearized MHD. Then we deduce the decay estimate of the first term of functional $\mathcal{D}_{p}$ with aid of the frequency-localization Duhamel principle. Having assumption \eqref{Eq:1-7}, we work on the $\dot{B}^{-s_{0}}_{2,\infty}$-$L^{2}$ estimates, actually, $L^{p/2}$-$L^{2}$ ones if $2\leq p\leq d$. However, if $p>d$ (a case that is relevant in physical dimension $d=2,3$), the low frequency part of nonlinear terms need not to be $L^{p/2}$. Consequently, some non so-classical product estimates in Besov spaces (see Propositions \ref{Prop2.2}-\ref{Prop2.3}) will be used.

In the second step, we are concerned with the time decay of the high frequencies part of the solution. In the case of higher Sobolev regularity, Duhamel principle always remains valid accepting loss of derivatives as the case may be. In our critical regularity framework however, one cannot afford any loss of regularity for the high frequency of the solution (for instance, like $u\cdot\nabla a$ induces a loss of one derivative as one cannot expect any smoothing for $a$, solution of a transport equation), so the usual Duhamel principle is now no longer valid. To get around the difficulty, we introduce the effective velocity (see for example \cite{HB})
\begin{equation}\label{Eq:3-1}
w\triangleq \nabla \left(-\Delta \right)^{-1}\left( a-\div u\right).
\end{equation}
Up to some low-order terms, we see that $H$, $w$ and the divergence free part $\cP u$ of $u$ all fulfill the ordinary heat equation, and
$a$ satisfies a damped transport equation. Thanks to the spectral-localization energy approach of $L^{p}$, we can achieve the desired high-frequency estimate for the second term of $\cD_{p}$.
In the last step, we establish gain of regularity and decay altogether for the high frequencies of the velocity and magnetic field, due to the parabolic smooth effect (see Proposition \ref{Prop2.7} and Remark \ref{Rem2.2}).

In what follows, we shall use repeatedly that for $0\leq s_{1}\leq s_{2}$ with $s_{2}>1$, we have
\begin{equation}\label{Eq:3-2}
\int_{0}^{t}\langle t-\tau\rangle^{-s_{1}}\langle\tau\rangle^{-s_{2}}d\tau\lesssim\langle t\rangle^{-s_{1}} ,
\end{equation}
\begin{equation}\label{Eq:3-3}
\int_{0}^{t}\langle t-\tau\rangle^{-s_{1}}\tau^{-\theta}\langle\tau\rangle^{\theta-s_{2}}d\tau\lesssim\langle t\rangle^{-s_{1}} \ \ \hbox{if} \ \ 0\leq\theta<1,
\end{equation}
and that the global solution $\left(a,u,H\right)$ given by Theorem \ref{Thm1.1} satisfies
\begin{equation}\label{Eq:3-4}
\left\|a\right\|_{\tilde{L}^{\infty}_{t}(\dot{B}^{\frac{d}{p}}_{p,1})}\leq c \ll 1 \ \ \hbox{for all} \ \ t\geq0.
\end{equation}

\subsection{First step: Bounds for the Low Frequencies}
Let $\left(\mathcal{G}(t)\right)_{t\geq0}$ be the semi-group associated with the left-hand side of system \eqref{Eq:1-6}. It follows from the
standard Duhamel formula that
\begin{equation}\label{Eq:3-5}
\left(
\begin{array}{c}
a\\
u\\
H
\end{array}
\right)
=\mathcal{G}(t)
\left(
\begin{array}{c}
a_{0}\\
u_{0}\\
H_{0}
\end{array}
\right)
+\int_{0}^{t}\mathcal{G}\left(t-\tau\right)
\left(
\begin{array}{c}
f(\tau)\\
g(\tau)\\
m(\tau)
\end{array}
\right)d\tau.
\end{equation}
The following Lemma indicates the low frequencies of $\left(a_{L},u_{L},H_{L}\right)\triangleq\mathcal{G}(t)\left(a_{0},u_{0},H_{0}\right)$
behaves essentially as the solution to the heat equation.
\begin{lem}\label{Lem3.1}
Let $\left(a_{L},u_{L},H_{L}\right)$ be the solution to the following system
\begin{equation*}
\left\{
\begin{array}{l}
\partial_{t}a_{L}+\div u_{L}=0, \\
\partial_{t}u_{L}-\cA u_{L}+\nabla a_{L}+\nabla\left(I\cdot H_{L}\right)-I\cdot\nabla H_{L}=0, \\
\partial_{t}H_{L}-\Delta H_{L}+\left(\div u_{L}\right)I-I\cdot\nabla u_{L}=0,\\
\div H_{L}=0,
\end{array}
\right.
\end{equation*}
with the initial data
\begin{equation*}
\left(a_{L},u_{L},H_{L}\right)|_{t=0}=\left(a_{0},u_{0},H_{0}\right).
\end{equation*}
Then, for any $k_{0}\in\Z$, there exists a positive constant $c_{0}=c_{0}\left(\lambda_{\infty},\mu_{\infty},k_{0}\right)$
such that
\begin{equation*}
\left\|\left(a_{L,k},u_{L,k},H_{L,k}\right)(t)\right\|_{L^{2}}\lesssim e^{-c_{0}2^{2k}t} \left\|\left(a_{0,k},u_{0,k},H_{0,k}\right)\right\|_{L^{2}}
\end{equation*}
for $t\geq0$ and $k\leq k_{0}$, where we set $z_{k}=\dot{\Delta}_{k}z$ for any $z\in S'(\R^{d})$.
\end{lem}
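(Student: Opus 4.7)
The plan is to work in Fourier space on each dyadic block and construct a Kawashima-type Lyapunov functional $\mathcal{L}_k(\xi,t)$ that is pointwise equivalent to $|\widehat U_k|^2:=|\widehat a_k|^2+|\widehat u_k|^2+|\widehat H_k|^2$ (here $\widehat U_k$ denotes the Fourier transform of $\dot\Delta_k(a_L,u_L,H_L)$) and that obeys $\tfrac{d}{dt}\mathcal{L}_k\leq -c_0|\xi|^2\mathcal{L}_k$. Pointwise Gronwall, followed by the frequency-support property $|\xi|\sim 2^k$ and Plancherel, will then deliver the claimed exponential decay in $L^2$. Since the linearized system has the symmetric hyperbolic-parabolic structure of Shizuta--Kawashima, the construction of such a compensator is essentially classical; the new point here is to handle the magnetic coupling through the constant direction $I$.

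First I would write the dyadically localized Fourier ODE $\partial_t\widehat U_k+M(\xi)\widehat U_k=0$ and compute $\tfrac{d}{dt}|\widehat U_k|^2$ by taking the real part of the inner product with $\widehat U_k$. Two groups of cross terms cancel: the hyperbolic pair $-i\xi\cdot\widehat u_k$ (from the $a$-equation) against $i\xi\,\widehat a_k$ (from the $u$-equation) cancels by complex conjugation, and the magnetic coupling $i\xi(I\cdot\widehat H_k)-i(I\cdot\xi)\widehat H_k$ in the $u$-equation cancels, termwise, the coupling $(i\xi\cdot\widehat u_k)I-i(I\cdot\xi)\widehat u_k$ in the $H$-equation, precisely because of the antisymmetric structure built in by the vector $I$. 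What survives is
$$\tfrac{1}{2}\tfrac{d}{dt}|\widehat U_k|^2+\mu_\infty|\xi|^2|\widehat u_k|^2+(\lambda_\infty+\mu_\infty)|\xi\cdot\widehat u_k|^2+|\xi|^2|\widehat H_k|^2=0,$$
which provides parabolic dissipation on $(\widehat u_k,\widehat H_k)$ but none on $\widehat a_k$.

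To recover dissipation on $\widehat a_k$, I would introduce the compensator $\phi_k(\xi,t):=\operatorname{Im}[\widehat a_k(\xi\cdot\overline{\widehat u_k})]$. Using $2\mu_\infty+\lambda_\infty=1$, a direct computation with the ODE produces
$$\dot\phi_k+|\xi|^2\phi_k=|\xi|^2|\widehat a_k|^2-|\xi\cdot\widehat u_k|^2+R_k,\qquad |R_k|\lesssim|\xi|^2|\widehat a_k|\,|\widehat H_k|,$$
the remainder $R_k$ being the unavoidable price of the magnetic coupling. Set $\mathcal{L}_k:=|\widehat U_k|^2-\kappa\phi_k$ for a small constant $\kappa=\kappa(\lambda_\infty,\mu_\infty,k_0)$. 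On $\operatorname{Supp}\widehat U_k$ one has $|\phi_k|\leq|\xi|\,|\widehat a_k|\,|\widehat u_k|\lesssim 2^{k_0}|\widehat U_k|^2$, so that for $\kappa$ small enough $\mathcal{L}_k\approx|\widehat U_k|^2$. Young's inequality (combined with the bound $|\xi|\lesssim 2^{k_0}$) absorbs $\kappa|\xi|^2\phi_k$ into $\tfrac{\kappa}{2}|\xi|^2|\widehat a_k|^2+C(k_0)\kappa|\xi|^2|\widehat u_k|^2$, and $\kappa R_k$ into $\tfrac{\kappa}{4}|\xi|^2|\widehat a_k|^2+|\xi|^2|\widehat H_k|^2$; both absorbed contributions are controlled by the dissipation already present on $(\widehat u_k,\widehat H_k)$ once $\kappa$ is small in terms of $\mu_\infty$ and $k_0$. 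One concludes
$$\tfrac{d}{dt}\mathcal{L}_k(\xi,t)\leq -c_0|\xi|^2\mathcal{L}_k(\xi,t)\leq -c_0\,2^{2k}\mathcal{L}_k(\xi,t)$$
on $\operatorname{Supp}\widehat U_k$ for $k\leq k_0$, and Gronwall followed by integration in $\xi$ and Plancherel finishes the proof. The main obstacle is the magnetic remainder $R_k$: unlike in the pure Navier--Stokes analysis of \cite{DX}, $\phi_k$ is no longer a perfect compensator, and one must exploit the antisymmetry of the $I$-coupling together with the parabolic diffusion on $\widehat H_k$ in order to absorb $R_k$ and close the estimate.
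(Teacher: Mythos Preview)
Your proposal is correct and follows essentially the same route as the paper: a Kawashima-type Lyapunov functional in Fourier space, built from the basic energy plus a small multiple of the cross term $\operatorname{Re}(\widehat a\,\overline{\xi\cdot\widehat u})$, with the magnetic remainder absorbed by the parabolic dissipation on $\widehat H$ and the low-frequency bound $|\xi|\lesssim 2^{k_0}$. The only cosmetic difference is that the paper first passes to the auxiliary variables $\omega_L=\Lambda^{-1}\div u_L$, $\Omega_L=\Lambda^{-1}\mathrm{curl}\,u_L$, $E_L=\Lambda^{-1}\mathrm{curl}\,H_L$ before running the same computation (and includes an extra $\sigma|\xi|^2|A|^2$ term in its functional to make the equivalence $\mathcal L^2\approx|(A,|\xi|A,V,W,M)|^2$ cleaner), whereas you work directly with $(\widehat a,\widehat u,\widehat H)$; since $\xi\cdot\widehat u=-i|\xi|\widehat\omega$, your $\phi_k$ is exactly the paper's $|\xi|\operatorname{Re}(A\bar V)$ and the two arguments are equivalent.
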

\begin{proof}
Let $\Lambda^{s}z\triangleq\mathcal{F}^{-1}\left(\left|\xi\right|^{s}\mathcal{F}z\right)$ ($s\in\R$),
$\omega_{L}=\Lambda^{-1}\mathrm{div}\,u_{L}$, $\Omega_{L}=\Lambda^{-1}\mathrm{curl}\,u_{L}$ and $E_{L}=\Lambda^{-1}\mathrm{curl}\,H_{L}$ where $\mathrm{curl}\,v=\left(\partial_{j}v_{i}-\partial_{i}v_{j}\right)_{ij}$ is a $d\times d$ matrix. Then we see that
\begin{equation}\label{Eq:3-6}
\left\{
\begin{array}{l}
\partial _{t}a_{L}+\Lambda \omega_{L}=0, \\
\partial _{t}\omega_{L}-\Delta \omega_{L}-\Lambda a_{L}-I\cdot \mathrm{div}\,E_{L}=0, \\
\partial _{t}\Omega_{L}-\mu_{\infty}\Delta \Omega_{L}-I\cdot \nabla E_{L}=0, \\
\partial _{t}E_{L}-\Delta E_{L}+\mathrm{curl}\,\left(\omega_{L}I\right)-I\cdot\nabla \Omega_{L}=0,\\
u_{L}=-\Lambda^{-1}\nabla\omega_{L}+\Lambda^{-1}\mathrm{div}\,\Omega_{L},\ \ \ H_{L}=\Lambda^{-1}\mathrm{div}\,E_{L}, \ \ \ \mathrm{div}\,H_{L}=0,
\end{array}
\right.
\end{equation}
where $\left(\mathrm{div}E\right)_{j}=\sum \limits_{i=1}^{d}\partial_{i}E_{ij}$ with entries $E_{ij}$ of the matrix $E$.

Set $\left(A,V,W,M\right)$ be the Fourier transform of $\left(a_{L},\omega_{L},\Omega_{L},E_{L}\right)$. Then we get from \eqref{Eq:3-6}
\begin{equation*}
\left\{
\begin{array}{l}
\partial _{t}A+\left|\xi\right|V=0, \\
\partial _{t}V+\left|\xi\right|^{2}V-\left|\xi\right|A-I\cdot \left(i\,\xi M\right) =0, \\
\partial _{t}W+\mu_{\infty}\left|\xi\right|^{2}W-i\left(I\cdot \xi\right)M=0, \\
\partial _{t}M+\left|\xi\right|^{2} M+i\,\Gamma\,V-i\left(I\cdot \xi\right)W=0,
\end{array}
\right.
\end{equation*}
where $\Gamma=\left(\xi_{j}I_{i}-\xi_{i}I_{j}\right)_{ij}$ is a $d\times d$ matrix. Hence, we easily get the following four identities
\begin{eqnarray}\label{Eq:3-7}
&&\frac{1}{2}\frac{d}{dt}\left|A\right|^{2}+
\left|\xi\right|\mathrm{Re}\left(A\bar{V}\right)=0,\\
\label{Eq:3-8}
&&\frac{1}{2}\frac{d}{dt}\left|V\right|^{2}+\left|\xi\right|^{2}\left|V\right|^{2}
-\left|\xi\right|\mathrm{Re}\left(A\bar{V}\right)
-\mathrm{Re}\left(\left(I\cdot \left(\mathrm{i}\,\xi M\right)\right)\bar{V}\right)=0,\\
\label{Eq:3-9}
&&\frac{1}{2}\frac{d}{dt}\left|W\right|^{2}+\mu_{\infty}|\xi|^{2}\left|W\right|^{2}
-\left(I\cdot \xi\right)\mathrm{Re}\left\langle \mathrm{i}\,M,W\right\rangle= 0,\\
\label{Eq:3-10}
&&\frac{1}{2}\frac{d}{dt}\left|M\right|^{2}+\left|\xi\right|^{2}\left|M\right|^{2}
+\mathrm{Re}\left\langle \mathrm{i}\,\Gamma\,V,M\right\rangle
-\left(I\cdot \xi\right)\mathrm{Re}\left\langle \mathrm{i}\,W,M\right\rangle=0,
\end{eqnarray}
where $\bar{f}$ indicates the complex conjugate of a function $f$.
Noticing that
\begin{eqnarray*}
&&\hspace{-5mm}\mathrm{Re}\left\langle i\,\Gamma V,M\right\rangle
=\mathrm{Re}\left(\sum^{d}_{k,j=1}i\left(\xi_{j}I_{k}-\xi_{k}I_{j}\right)V \bar{M}_{kj}\right)
=-2\,\mathrm{Re}\left(\sum^{d}_{k,j=1}i\,\xi_{k}I_{j}V\bar{M}_{kj}\right)\\
&&\hspace{1.7cm}=2\,\mathrm{Re}\left(\sum^{d}_{k,j=1}i\,\xi_{k}I_{j}\bar{V}M_{kj}\right)
=2\,\mathrm{Re}\left(\left(I\cdot \left(i\,\xi M\right)\right)\bar{V}\right)
\end{eqnarray*}
and
\begin{equation*}
\mathrm{Re}\left\langle i\,M,W\right\rangle=-\mathrm{Re}\left\langle i\,W,M\right\rangle.
\end{equation*}
By adding up \eqref{Eq:3-7}, \eqref{Eq:3-8}, \eqref{Eq:3-9} and \eqref{Eq:3-10}, we arrive at
\begin{equation}\label{Eq:3-11}
\frac{1}{2}\frac{d}{dt}\left|\left(A,V,W,M\right)\right|^{2}
+\bar{\mu}_{\infty}\left|\xi\right|^{2}\left|\left(V,W,M\right)\right|^{2}\leq 0 \ \text{ with } \ \bar{\mu}_{\infty}\triangleq \min\left(1,\mu_{\infty}\right).
\end{equation}
Next, we need the following equality to create the dissipation of $A$:
\begin{equation*}
\frac{d}{dt}\left[-\mathrm{Re}\,\left(A\bar{V}\right)\right]
+\left|\xi\right|\left|A\right|^{2}-\left|\xi\right|\left|V\right|^{2}
-\left|\xi\right|^{2}\mathrm{Re} \left(A\bar{V}\right)
+\mathrm{Re}\left(\left(I\cdot \left(i\,\xi M\right)\right)\bar{A}\right)=0.
\end{equation*}
Together with \eqref{Eq:3-7}, furthermore, we infer that
\begin{eqnarray}
 \frac{1}{2}\frac{d}{dt}\left[\left|\left|\xi\right|A\right|^{2}-2\left|\xi\right|\mathrm{Re}\left(A\bar{V}\right)\right]
+\left|\xi\right|^{2}\left|A\right|^{2}-\left|\xi\right|^{2}\left|V\right|^{2} \nonumber \\
\label{Eq:3-12}
+\left|\xi\right|\mathrm{Re}\left(\left(I\cdot\left(i\,\xi M\right)\right)\bar{A}\right)= 0.
\end{eqnarray}
Hence, by introducing the ``Lyapunov functional"
$$\mathcal{L}^{2}(t)\triangleq \left|\left(A,V,W,M\right)\right|^{2}
+\sigma\left(\left|\xi\right|^{2}\left|A\right|^{2}-2\left|\xi\right|\mathrm{Re}\left(A\bar{V}\right)\right)$$
for some $\sigma>0$, we get from \eqref{Eq:3-11} and \eqref{Eq:3-12}
\begin{eqnarray*}
\frac{1}{2}\frac{d}{dt}\mathcal{L}^{2}(t)
+|\xi|^{2}\left(\sigma\left|A\right|^{2}+\left(\bar{\mu}_{\infty}-\sigma\right)\left|V\right|^{2}
+\bar{\mu}_{\infty}\left|\left(W,M\right)\right|^{2}\right)\nonumber\\
+\sigma\left|\xi\right|\mathrm{Re}\left(\left(I\cdot\left(i\,\xi M\right)\right)\bar{A}\right) \leq 0.
\end{eqnarray*}
Then, taking advantage of the basic inequality
$$\left|\mathrm{Re}\left(\left(I\cdot\left(i\,\xi M\right)\right)\bar{A}\right)\right|
\leq \frac12 \left|\xi\right|(\left|A\right|^{2}+\left|M\right|^{2}),$$
and then choosing the constant $\sigma$ such that $\sigma=\frac{1}{2}\,\bar{\mu}_{\infty}$, we end up with
\begin{equation}\label{Eq:3-13}
\frac{d}{dt}\mathcal{L}^{2}(t)
+\bar{\mu}_{\infty}\left|\xi\right|^{2}\left(\frac{1}{2}\left|A\right|^{2}+\left|V\right|^{2}
+2\left|W\right|^{2}+\frac{3}{2}\left|M\right|^{2}\right) \leq 0.
\end{equation}
On the other hand, it is not difficult to conclude that there exist some constant $C_{0}>0$ depending only on $\mu_{\infty}$ so that
\begin{equation}\label{Eq:3-14}
C^{-1}_{0}\mathcal{L}^{2}(t)\leq \left|\left(A,\left|\xi\right|A,V,W,M\right)\right|^{2}\leq C_{0}\mathcal{L}^{2}(t).
\end{equation}
This in particular implies that for arbitrary fixed $\rho_{0}\triangleq \left|\xi_{0}\right|>0$ we get for some constant $c_{0}$ depending only on $\rho_{0}$ and $\bar{\mu}_{\infty}$,
$$\bar{\mu}_{\infty}\left(\frac{1}{2}\left|A\right|^{2}+\left|V\right|^{2}
+2\left|W\right|^{2}+\frac{3}{2}\left|M\right|^{2}\right)\geq c_{0}\mathcal{L}^{2}(t)\ \ \ \mbox{for all}\ \ \ 0\leq |\xi|\leq\rho_{0}.$$
Hence, reverting to \eqref{Eq:3-13} yields
$$\mathcal{L}^{2}(t)\leq e^{-c_{0}\left|\xi\right|^{2}t}\mathcal{L}^{2}(0).$$
Then we get thanks to \eqref{Eq:3-14},
\begin{equation*}
\left|\left(A,V,W,M\right)(t)\right|
\leq C e^{-\frac{c_{0}}{2}|\xi|^{2}t}\left|\left(A,V,W,M\right)(0)\right|
\end{equation*}
for all $t\geq0 $ and $ 0\leq |\xi|\leq\rho_{0}$. Finally, multiplying both sides of the above inequality by $\varphi\left(2^{-k}\xi\right)$,
with the aid of Fourier-Plancherel theorem, we end up for all $k\leq k_{0}$ with
$$ \left\|\left(a_{L,k},\omega_{L,k},\Omega_{L,k},E_{L,k}\right)(t)\right\|_{L^{2}}
\lesssim e^{-\frac{c_{0}}{2}2^{2k}t}\left\|\left(a_{L,k},\omega_{L,k},\Omega_{L,k},E_{L,k}\right)(0)\right\|_{L^{2}},
$$
which leads to our desired inequality eventually. The proof of Lemma \ref{Lem3.1} is complete.
\end{proof}
Set $U\triangleq\left(a,u,H\right)$ and $U_{0}\triangleq\left(a_{0},u_{0},H_{0}\right)$. Based on the key lemma
(Lemma \ref{Lem3.1}), we perform the same procedure as in \cite{DX} to get for $s+s_{0}>0$
\begin{equation*}
\sup_{t\geq 0}t^{\frac{s_{0}+s}{2}}\left\|\mathcal{G}(t)U\right\|^{\ell}_{\dot{B}^{s}_{2,1}}
\lesssim \left\|U\right\|^{\ell}_{\dot{B}^{-s_{0}}_{2,\infty}}.
\end{equation*}
Additionally, it is clear that for $s+s_{0}>0$,
\begin{equation*}
\left\|\mathcal{G}(t)U\right\|^{\ell}_{\dot{B}^{s}_{2,1}}\lesssim \left\|U\right\|^{\ell}_{\dot{B}^{-s_{0}}_{2,\infty}}\sum_{k\leq k_{0}}2^{k\left(s_{0}+s\right)}\lesssim \left\|U\right\|^{\ell}_{\dot{B}^{-s_{0}}_{2,\infty}}.
\end{equation*}
Hence, setting $\langle t\rangle\triangleq\sqrt{1+t^{2}}$, we obtain
\begin{equation}\label{Eq:3-15}
\sup_{t\geq 0}\langle t\rangle^{\frac{s_{0}+s}{2}}\left\|\mathcal{G}(t)U_{0}\right\|^{\ell}_{\dot{B}^{s}_{2,1}}\lesssim \left\|U_{0}\right\|^{\ell}_{\dot{B}^{-s_{0}}_{2,\infty}}.
\end{equation}
Furthermore, it follows from Duhamel's formula that
\begin{equation*}
\left\|\int_{0}^{t}\mathcal{G}(t-\tau)\left(f,g,m\right)(\tau)d\tau\right\|^{\ell}_{\dot{B}^{s}_{2,1}}\lesssim \int_{0}^{t}\langle t-\tau\rangle^{-\frac{s_{0}+s}{2}} \left\|\left(f,g,m\right)(\tau)\right\|^{\ell}_{\dot{B}^{-s_{0}}_{2,\infty}}d\tau.
\end{equation*}
We claim that if $p$ satisfies \eqref{Eq:1-4}, then we have for all $t\geq0$,
\begin{equation}\label{Eq:3-16}
\int_{0}^{t}\langle t-\tau\rangle^{-\frac{s_{0}+s}{2}} \left\|\left(f,g,m\right)(\tau)\right\|^{\ell}_{\dot{B}^{-s_{0}}_{2,\infty}}d\tau
\lesssim \langle t\rangle^{-\frac{s_{0}+s}{2}}\left(\mathcal{D}^{2}_{p}(t)+\mathcal{E}^{2}_{p}(t)\right),
\end{equation}
where $\mathcal{E}_{p}(t)$ and $\mathcal{D}_{p}(t)$ have been defined by \eqref{Eq:1-5} and \eqref{Eq:1-9}, respectively.

Indeed, we decompose the nonlinear term $g=\sum \limits_ {j=1}^{6}g_{j}$ with
\begin{eqnarray*}
&&g_{1}=-u\cdot \nabla u,\ \ \ g_{2}=-\pi_{2}(a) \nabla a, \\
&&g_{3}=\frac {1}{1+a}\left( 2\widetilde{\mu }(a)\,\mathrm{div}\,D(u)+%
\widetilde{\lambda}(a)\,\nabla \mathrm{div}\,u\right) -\pi_{1}(a)\mathcal{A}u, \\
&&g_{4}=\frac {1}{1+a}\left( 2\widetilde{\mu}'(a)\,\mathrm{div}\,D(u)\cdot\nabla a+\widetilde{\lambda}'(a)\,\mathrm{div}\,u\, \nabla
a\right),\\
&&g_{5}=\pi_{1}(a)\left(\nabla\left(I\cdot H\right)-I\cdot\nabla H\right), \ \ \ g_{6}=-\frac {1}{1+a}\left(\frac{1}{2}\nabla\left|H\right|^{2}-H\cdot\nabla H\right).
\end{eqnarray*}
As shown by \cite{DX}, we can get the following inequality
\begin{equation*}
\int_{0}^{t}\langle t-\tau\rangle^{-\frac{s_{0}+s}{2}}\left\|\left(f,g_{1},g_{2},g_{3},g_{4}\right)\right\|^{\ell}_{\dot{B}^{-s_{0}}_{2,\infty}}d\tau\lesssim\langle t\rangle^{-\frac{s_{0}+s}{2}}\left(\mathcal{E}_{p}^{2}(t)+\mathcal{D}^{2}_{p}(t)\right).
\end{equation*}
In order to finish the proof of \eqref{Eq:3-16}, it suffices to bound those terms coupled with magnetic field. To the end, it is convenient to decompose them
in terms of low-frequency and high-frequency as follows:
\begin{eqnarray*}
&&\hspace{-6mm}g_{5}(a,H)=\pi_{1}(a)\left( \nabla(I\cdot H^{\ell})-I\cdot\nabla H^{\ell}\right)+\pi_{1}(a)\left(\nabla(I\cdot H^{h})-I\cdot\nabla H^{h}\right),\\
&&\hspace{-6mm}g_{6}\left(a,H\right)=-\frac {1}{1+a}\left(\left(\nabla H^{\ell}\cdot H\right)^{T}-H\cdot\nabla H^{\ell}\right)
-\frac {1}{1+a}\left(\left(\nabla H^{h}\cdot H\right)^{T}-H\cdot\nabla H^{h}\right),\\
&&\hspace{-6mm}m(u,H)=-H(\mathrm{div}\,u^{\ell})+H\cdot\nabla u^{\ell}-u\cdot\nabla H^{\ell}-H(\mathrm{div}\,u^{h})+H\cdot\nabla u^{h}-u\cdot\nabla H^{h},
\end{eqnarray*}
where
$$z^{\ell}\triangleq\sum_{k\leq k_0}\ddk z\ \ \hbox{and}\ \ z^{h}\triangleq z-z^{\ell} \quad\hbox{for}\quad z=u,H.$$

In addition, we shall use repeatedly the following Moser's product inequalities in Besov spaces with negative regularity, which are the direct consequences of Proposition \ref{Prop2.2}.
\begin{lem}
If $p$ satisfies \eqref{Eq:1-4}, then it holds that
\begin{eqnarray}\label{Eq:3-17}
&&\left\|FG\right\|_{\dot {B}^{-s_{0}}_{2,\infty}}
\lesssim \left\|F\right\|_{\dot {B}^{1-\frac {d}{p}}_{p,1}}\left\|G\right\|_{\dot {B}^{\frac {d}{2}-1}_{2,1}},\\
\label{Eq:3-18}
&&\left\|FG\right\|_{\dot {B}^{-\frac {d}{p}}_{2,\infty}}
\lesssim \left\|F\right\|_{\dot {B}^{\frac {d}{p}-1}_{p,1}}\left\|G\right\|_{\dot{B}^{1-\frac {d}{p}}_{2,1}},\\
\label{Eq:3-19}
&&\left\|FG\right\|_{\dot {B}^{1-\frac {d}{p}}_{p,1}}\lesssim \left\|F\right\|_{\dot {B}^{\frac {d}{p}}_{p,1}}\left\|F\right\|_{\dot {B}^{1-\frac {d}{p}}_{p,1}}.
\end{eqnarray}
\end{lem}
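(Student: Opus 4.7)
The three product inequalities all reduce to a careful application of Bony's product calculus encoded in Proposition \ref{Prop2.2}, possibly combined with a Besov embedding from Proposition \ref{Prop2.1}. I would address them in the order \eqref{Eq:3-19}, \eqref{Eq:3-17}, \eqref{Eq:3-18}, since the obstruction to a one-line application of Proposition \ref{Prop2.2} grows in that order.

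For \eqref{Eq:3-19} the second part of Proposition \ref{Prop2.2} with $p_{1}=p_{2}=q=p$, $\sigma_{1}=d/p$ and $\sigma_{2}=1-d/p$ does the job: the hypotheses $2/p\leq 1$, $\sigma_{i}\leq d/p$, $\sigma_{1}\geq\sigma_{2}$ and $\sigma_{1}+\sigma_{2}=1>0$ are all read off \eqref{Eq:1-4}.

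For \eqref{Eq:3-17} I would apply the same product estimate with $p_{1}=2$, $p_{2}=p$, $\sigma_{1}=d/2-1$ and $\sigma_{2}=1-d/p$. All assumptions hold provided $d(1/2+1/p)\geq 2$, which is guaranteed by \eqref{Eq:1-4} except at the two-dimensional endpoint $d=2$, $p\in(2,4)$; in that range one simply swaps the roles of $F$ and $G$ (i.e.\ takes $p_{1}=p$, $p_{2}=2$, $\sigma_{1}=1-2/p$, $\sigma_{2}=0$) and the same argument goes through. The output lies in $\dot B^{1-d/p}_{q,1}$ with $1/q=1/p+1/d$, and the upper bound $p\leq 2d/(d-2)$ in \eqref{Eq:1-4} yields $q\leq 2$, so the embedding $\dot B^{1-d/p}_{q,1}\hookrightarrow \dot B^{-s_{0}}_{2,1}\hookrightarrow \dot B^{-s_{0}}_{2,\infty}$ from Proposition \ref{Prop2.1} (using the identity $1-d/p-d(1/q-1/2)=-s_{0}$) finishes the estimate.

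The main obstacle is \eqref{Eq:3-18}: the natural choice of indices $(\sigma_{1},\sigma_{2})=(d/p-1,1-d/p)$ sums to zero and therefore falls just outside the hypotheses of Proposition \ref{Prop2.2} part 2. I would circumvent this via Bony's paraproduct splitting $FG=T_{F}G+T_{G}F+R(F,G)$ and bound each piece separately using continuous paraproduct and remainder estimates, with the needed $L^{\infty}$ control supplied by the embeddings in Proposition \ref{Prop2.1}; here the condition $1/p+1/2\leq 1$ from \eqref{Eq:1-4} is precisely what ensures that the remainder is controlled in the right $L^{q}$-based Besov space, after which a final embedding into $\dot B^{-d/p}_{2,\infty}$ closes the loop. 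Equivalently, one may invoke the third part of Proposition \ref{Prop2.2} after shifting the regularities slightly so that their sum becomes strictly positive, and then absorb the shift by a Besov embedding.
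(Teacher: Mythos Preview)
Your treatment of \eqref{Eq:3-19} is correct and identical to what the paper does: it chooses exactly $\sigma_{1}=d/p$, $\sigma_{2}=1-d/p$, $p_{1}=p_{2}=p$ in the second part of Proposition~\ref{Prop2.2}. For \eqref{Eq:3-17} and \eqref{Eq:3-18} the paper gives no argument at all---it simply refers the reader to \cite{DX}---so your sketch actually provides more detail than the paper.

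That said, your argument for \eqref{Eq:3-17} has a small gap. You verified the ordering condition $\sigma_{1}\geq\sigma_{2}$ (equivalent to $d(\tfrac12+\tfrac1p)\geq 2$), but not the positivity condition $\sigma_{1}+\sigma_{2}>0$. With your choice $\sigma_{1}=d/2-1$, $\sigma_{2}=1-d/p$ one has $\sigma_{1}+\sigma_{2}=d/2-d/p$, which vanishes at the admissible endpoint $p=2$ for every $d\geq2$, so the second part of Proposition~\ref{Prop2.2} does not apply there. The fix is immediate: at $p=2$ the inequalities \eqref{Eq:3-17} and \eqref{Eq:3-18} coincide (just swap $F$ and $G$), so the direct Bony paraproduct argument you outline for \eqref{Eq:3-18} already covers this case; alternatively one can invoke the third part of Proposition~\ref{Prop2.2}, or simply note that $\|FG\|_{L^{1}}\leq\|F\|_{L^{2}}\|G\|_{L^{2}}$ combined with $\dot B^{0}_{2,1}\hookrightarrow L^{2}$ and $L^{1}\hookrightarrow \dot B^{-d/2}_{2,\infty}$.

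Your plan for \eqref{Eq:3-18}---recognizing that the natural indices sum to zero and falling back on an explicit paraproduct/remainder estimate---is the correct idea and is precisely how this borderline case is handled in \cite{DX}; there is nothing further to compare since the paper does not reproduce that argument.
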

The interested reader is referred to \cite{DX} for the proofs of \eqref{Eq:3-17}-\eqref{Eq:3-18}. By taking $\sigma_{2}=1-\frac {d}{p}, \sigma_{1}=\frac {d}{p}$ and $p_{1}=p_{2}=p$, we can get \eqref{Eq:3-19} readily. Owing to those embedding properties and the definition of $\mathcal{D}_p$, we arrive at
\begin{eqnarray} \label{Eq:3-20}
&&\left\|\left(a,u,H\right)^{\ell}(\tau)\right\|_{\dot{B}^{1-\frac{d}{p}}_{p,1}}\lesssim \left\|\left(a,u,H\right)^{\ell}(\tau)\right\|_{\dot{B}^{1-s_{0}}_{2,1}}\lesssim\langle\tau\rangle^{-\frac{1}{2}}\mathcal{D}_{p}(\tau),\\
\label{Eq:3-21}
&&\left\|\left(a,u,H\right)^{\ell}(\tau)\right\|_{\dot{B}^{\frac{d}{p}}_{p,1}}\lesssim\left\|(a,u,H)^{\ell}(\tau)\right\|_{\dot{B}^{\frac{d}{2}}_{2,1}}
\lesssim\langle\tau\rangle^{-\frac{d}{p}}\mathcal{D}_{p}(\tau).
\end{eqnarray}
Due to the fact $p<2d$, it follows that
\begin{equation} \label{Eq:3-22}
\left\|a^{h}(\tau)\right\|_{\dot{B}^{1-\frac{d}{p}}_{p,1}} \lesssim \left\|a^{h}(\tau)\right\|_{\dot{B}^{\frac{d}{p}}_{p,1}}\lesssim
\langle\tau\rangle^{-\frac{d}{p}}\mathcal{D}_{p}(\tau) \lesssim \langle\tau\rangle^{-\frac{1}{2}}\mathcal{D}_{p}(\tau)
\ \ \hbox{for  all} \ \ \tau\geq0.
\end{equation}
Using the interpolation inequality in Besov spaces, we have
\begin{equation*}
\left\|(u,H)^{h}\right\|_{\dot{B}^{1-\frac{d}{p}}_{p,1}}\lesssim\left\|(u,H)^{h}\right\|_{\dot{B}^{\frac{d}{p}}_{p,1}}
\lesssim \left(\left\|(u,H)\right\|^{h}_{\dot{B}^{\frac{d}{p}-1}_{p,1}}\left\|\nabla (u,H)\right\|^{h}_{\dot{B}^{\frac{d}{p}}_{p,1}}\right)^{\frac{1}{2}},
\end{equation*}
and thus, according to the definition of $\mathcal{D}_{p}$,
\begin{equation}\label{Eq:3-23}
\left\|(u,H)^{h}(\tau)\right\|_{\dot{B}^{1-\frac{d}{p}}_{p,1}}\lesssim\left\|(u,H)^{h}\right\|_{\dot{B}^{\frac{d}{p}}_{p,1}}
\lesssim  \tau^{-\frac{1}{2}} \langle\tau\rangle^{-\frac{\alpha}{2}}\mathcal{D}_{p}(\tau) \ \ \hbox{for  all} \ \ \tau\geq0.
\end{equation}
Now, let us bound those terms of  $g_{5}$, $g_{6}$ and $m$ containing $u^{\ell}$ or $H^{\ell}$. By using \eqref{Eq:3-17} together with Proposition \ref{Prop2.4} and \eqref{Eq:3-4}, \eqref{Eq:3-20}-\eqref{Eq:3-22}, we arrive at
\begin{eqnarray*}
&&\int_{0}^{t}\langle t-\tau\rangle^{-\frac{s_{0}+s}{2}}\left\|\pi_{1}(a)\left(\nabla\left(I\cdot H^{\ell}\right)-I\cdot\nabla H^{\ell}\right)\right\|^{\ell}_{\dot{B}^{-s_{0}}_{2,\infty}}d\tau\\
&\lesssim &\int_{0}^{t}\langle t-\tau\rangle^{-\frac{s_{0}+s}{2}}
\left\|a\right\|_{\dot{B}^{1-\frac{d}{p}}_{p,1}}\left\| H^{\ell}\right\|_{\dot{B}^{\frac{d}{2}}_{2,1}}d\tau\\
&\lesssim &\mathcal{D}^{2}_{p}(t)\int_{0}^{t}\langle t-\tau\rangle^{-\frac{s_{0}+s}{2}}
\langle \tau\rangle^{-\left(\frac{d}{p}+\frac{1}{2}\right)} d\tau.
\end{eqnarray*}
Because $\frac{d}{p}+\frac{1}{2}>1$ and $\frac{s_{0}+s}{2}\leq\frac{d}{p}+\frac{1}{2}$ for $s\leq\frac{d}{2}+1$, inequality \eqref{Eq:3-2} implies that
\begin{equation*}
\int_{0}^{t}\langle t-\tau\rangle^{-\frac{s_{0}+s}{2}}\left\|\pi_{1}(a)\left(\nabla\left(I\cdot H^{\ell}\right)-I\cdot\nabla H^{\ell}\right)\right\|^{\ell}_{\dot{B}^{-s_{0}}_{2,\infty}}d\tau
\lesssim\langle t\rangle^{-\frac{s_{0}+s}{2}}\mathcal{D}^{2}_{p}(t).
\end{equation*}
Regarding the term with $-\frac {1}{1+a}\left(\left(\nabla H^{\ell}\cdot H\right)^{T}-H\cdot\nabla H^{\ell}\right)$, it follows from \eqref{Eq:3-4}, \eqref{Eq:3-17}, \eqref{Eq:3-19}-\eqref{Eq:3-21} and \eqref{Eq:3-23}, and Proposition \ref{Prop2.4} that
\begin{eqnarray*}
&&\int_{0}^{t}\langle t-\tau\rangle^{-\frac{s_{0}+s}{2}}\left\|\frac {1}{1+a}\left(\left(\nabla H^{\ell}\cdot H\right)^{T}-H\cdot\nabla H^{\ell}\right)\right\|^{\ell}_{\dot{B}^{-s_{0}}_{2,\infty}}d\tau\\
&\lesssim &\int_{0}^{t}\langle t-\tau\rangle^{-\frac{s_{0}+s}{2}}(1+\left\|a\right\|_{\dot{B}^{\frac{d}{p}}_{p,1}})
\left\|H\right\|_{\dot{B}^{1-\frac{d}{p}}_{p,1}}\left\| H^{\ell}\right\|_{\dot{B}^{\frac{d}{2}}_{2,1}}d\tau\\
&\lesssim &\int_{0}^{t}\langle t-\tau\rangle^{-\frac{s_{0}+s}{2}}
\left\|H\right\|_{\dot{B}^{1-\frac{d}{p}}_{p,1}}\left\| H^{\ell}\right\|_{\dot{B}^{\frac{d}{2}}_{2,1}}d\tau\\
&\lesssim &\mathcal{D}^{2}_{p}(t)\int_{0}^{t}\langle t-\tau\rangle^{-\frac{s_{0}+s}{2}}
\left(\langle \tau\rangle^{-\frac{1}{2}}+\tau^{-\frac{1}{2}}\langle \tau\rangle^{-\frac{\alpha}{2}} \right)\langle \tau\rangle^{-\frac{d}{p}} d\tau.
\end{eqnarray*}
Thanks to \eqref{Eq:3-2} and \eqref{Eq:3-3}, we end up with
\begin{equation*}
\int_{0}^{t}\langle t-\tau\rangle^{-\frac{s_{0}+s}{2}}
\left\|\frac {1}{1+a}\left(\left(\nabla H^{\ell}\cdot H\right)^{T}-H\cdot\nabla H^{\ell}\right)\right\|^{\ell}_{\dot{B}^{-s_{0}}_{2,\infty}}d\tau
\lesssim \langle t\rangle^{-\frac{s_{0}+s}{2}}\mathcal{D}^{2}_{p}(t).
\end{equation*}
The terms $-H\left(\mathrm{div}\,u^{\ell}\right)$, $H\cdot \nabla u^{\ell}$ and $-u\cdot \nabla H^{\ell}$ may be treated at a similar way (use \eqref{Eq:3-17}, \eqref{Eq:3-20}, \eqref{Eq:3-21} and \eqref{Eq:3-23}), so we feel free to skip them for brevity.

For the terms of $g_{5}$, $g_{6}$ and $m$ containing $u^{h}$ or $H^{h}$, we shall proceed differently depending on whether $p\leq d$ or  $p>d$.
Let us first consider the easier case $2\leq p\leq d$. To bound the term corresponding to $\pi_{1}(a)\left(\nabla\left(I\cdot H^{h}\right)-I\cdot\nabla H^{h}\right)$, we use the Sobolev embedding $L^{\frac{p}{2}}\hookrightarrow\dot{B}^{-s_{0}}_{2,\infty}$ and H\"{o}lder inequality, and get
\begin{eqnarray*}
&&\int_{0}^{t}\langle t-\tau\rangle^{-\frac{s_{0}+s}{2}}\left\|\pi_{1}(a)\left(\nabla\left(I\cdot H^{h}\right)-I\cdot\nabla H^{h}\right)\right\|^{\ell}_{\dot{B}^{-s_{0}}_{2,\infty}}d\tau\\
&\lesssim & \int_{0}^{t}\langle t-\tau\rangle^{-\frac{s_{0}+s}{2}}\left\|a\right\|_{L^{p}}\left\|\nabla H^{h}\right\|_{L^{p}}d\tau.
\end{eqnarray*}
Using embeddings $\dot{B}^{\frac{d}{2}-\frac{d}{p}}_{2,1}\hookrightarrow L^{p}$ and $\dot{B}^{0}_{p,1}\hookrightarrow L^{p}$, we easily obtain
\begin{equation*}
\left\|\left(a,u,H\right)\right\|_{L^{p}}\leq\left \|\left(a,u,H\right)^{\ell}\right\|_{L^{p}}+\left\|\left(a,u,H\right)^{h}\right\|_{L^{p}}
\leq \left\|\left(a,u,H\right)\right\|^{\ell}_{\dot{B}^{\frac{d}{2}-\frac{d}{p}}_{2,1}}+\left\|\left(a,u,H\right)\right\|^{h}_{\dot{B}^{\frac{d}{p}-1}_{p,1}}
\end{equation*}
for $p\leq d$. Furthermore, it follows from the definition of $\mathcal{D}_{p}(t)$ and the fact $\alpha\geq\frac{d}{2p}$ that
\begin{equation}\label{Eq:3-24}
\left\|\left(a,u,H\right)(\tau)\right\|_{L^{p}}\lesssim \langle\tau\rangle^{-\frac{d}{2p}}\mathcal{D}_{p}(\tau).
\end{equation}
Additionally, arguing as for proving \eqref{Eq:3-23}, we have for $\frac{d}{p}\geq1$,
\begin{equation}\label{Eq:3-25}
\left\|\nabla\left(u,H\right)^{h}(\tau)\right\|_{L^{p}}\lesssim\left\|\left(u,H\right)^{h}(\tau)\right\|_{\dot{B}^{\frac{d}{p}}_{p,1}}
\lesssim\tau^{-\frac{1}{2}}\langle\tau\rangle^{-\frac{\alpha}{2}}\mathcal{D}_{p}(\tau).
\end{equation}
Using \eqref{Eq:3-24}, \eqref{Eq:3-25} and the fact that $\frac{\alpha+1}{2}+\frac{d}{2p}>\frac dp + \frac12>1$ for all $s\leq\frac{d}{2}+1$, we conclude thanks to \eqref{Eq:3-3} that
\begin{eqnarray*}
&&\int_{0}^{t}\langle t-\tau\rangle^{-\frac{s_{0}+s}{2}}\left\|\pi_{1}(a)\left(\nabla\left(I\cdot H^{h}\right)-I\cdot\nabla H^{h}\right)\right\|^{\ell}_{\dot{B}^{-s_{0}}_{2,\infty}}d\tau\\
&\lesssim & \mathcal{D}^{2}_{p}(t)\int_{0}^{t}\langle t-\tau\rangle^{-\frac{s_{0}+s}{2}}\tau^{-\frac{1}{2}}\langle \tau\rangle^{-\frac{d}{2p}-\frac{\alpha}{2}}d\tau\\
&\lesssim& \mathcal \langle t\rangle^{-\frac{s_{0}+s}{2}} {D}^{2}_{p}(t).
\end{eqnarray*}
Note that the embedding $\dot{B}^{\frac{d}{p}}_{p,1}\hookrightarrow L^{\infty}$, \eqref{Eq:3-3}, \eqref{Eq:3-4}, \eqref{Eq:3-24} and \eqref{Eq:3-25}, we obtain
\begin{eqnarray*}
&&\int_{0}^{t}\langle t-\tau\rangle^{-\frac{s_{0}+s}{2}}
\left\|\frac {1}{1+a}\left(\left(\nabla H^{h}\cdot H\right)^{T}-H\cdot\nabla H^{h}\right)\right\|^{\ell}_{\dot{B}^{-s_{0}}_{2,\infty}}d\tau\\
&\lesssim & \int_{0}^{t}\langle t-\tau\rangle^{-\frac{s_{0}+s}{2}}(1+\left\|a\right\|_{\dot{B}^{\frac{d}{p}}_{p,1}})\left\|H\right\|_{L^{p}}\left\|\nabla H^{h}\right\|_{L^{p}}d\tau\\ &\lesssim & \mathcal{D}^{2}_{p}(t)\int_{0}^{t}\langle t-\tau\rangle^{-\frac{s_{0}+s}{2}}\tau^{-\frac{1}{2}}\langle \tau\rangle^{-\frac{d}{2p}-\frac{\alpha}{2}}d\tau\\
&\lesssim &  \langle t\rangle^{-\frac{s_{0}+s}{2}} \mathcal{D}^{2}_{p}(t).
\end{eqnarray*}
Bounding $-H(\mathrm{div}\,u^{h}), \ \ H\cdot \nabla u^{h}$ and $-u\cdot \nabla H^{h}$ essentially follows from the same procedure, we thus omit them.

Let us end that step by considering the terms of $g_{5}\left(a,H\right)$, $g_{6}\left(a,H\right)$ and $m\left(u,H\right)$ with $u^{h}$ or $H^{h}$ in the case of $p>d$. We observe that applying \eqref{Eq:2-5} with $\sigma=1-\frac{d}{p}$ yields
\begin{equation}\label{Eq:3-26}
\|F G^{h}\|^{\ell}_{\dot{B}^{-s_{0}}_{2,\infty}}\lesssim (\|F\|_{\dot{B}^{1-\frac{d}{p}}_{p,1}}+\|\dot{S}_{k_{0}+N_{0}}F\|_{L^{p^{*}}})\|G^{h}\|_{\dot{B}^{\frac{d}{p}-1}_{p,1}}
\ \ \hbox{with} \ \ \frac{1}{p^{*}}\triangleq\frac{1}{2}-\frac{1}{p},
\end{equation}
and thus, because $\dot{B}^{\frac{d}{p}}_{2,1}\hookrightarrow L^{p^{*}}$,
\begin{equation}\label{Eq:3-27}
\|F G^{h}\|^{\ell}_{\dot{B}^{-s_{0}}_{2,\infty}}\lesssim (\|F^{\ell}\|_{\dot{B}^{\frac{d}{p}}_{2,1}}
+\|F\|_{\dot{B}^{1-\frac{d}{p}}_{p,1}})\|G^{h}\|_{\dot{B}^{\frac{d}{p}-1}_{p,1}}.
\end{equation}
To bound the term with $\pi_{1}(a)\left(\nabla\left(I\cdot H^{h}\right)-I\cdot \nabla H^{h}\right)$,
we see that, using the composition inequality in Lebesgue spaces and the embeddings $\dot{B}^{\frac{d}{p}}_{2,1}\hookrightarrow L^{p^{*}}$ and $\dot{B}^{s_{0}}_{p,1}\hookrightarrow L^{p^{*}}$, we get
$$
\left\|\pi_{1}(a)\right\|_{L^{p^{*}}}\lesssim\left\|a\right\|_{L^{p^{*}}}\lesssim\left\|a^{\ell}\right\|_{\dot{B}^{\frac{d}{p}}_{2,1}}+\left\|a^{h}\right\|_{\dot{B}^{s_{0}}_{p,1}}
\lesssim\left\|a^{\ell}\right\|_{\dot{B}^{\frac{d}{p}}_{2,1}}+\left\|a^{h}\right\|_{\dot{B}^{\frac{d}{p}}_{p,1}}.
$$
Hence, taking advantage of \eqref{Eq:3-4}, \eqref{Eq:3-26} and Proposition \ref{Prop2.4}, we have
\begin{eqnarray*}
&&\int_{0}^{t}\langle t-\tau\rangle^{-\frac{s_{0}+s}{2}}\left\|\pi_{1}(a)\left(\nabla(I\cdot H^{h})-I\cdot\nabla H^{h}\right)\right\|^{\ell}_{\dot{B}^{-s_{0}}_{2,\infty}}d\tau\\
&\lesssim &\int_{0}^{t}\langle t-\tau\rangle^{-\frac{s_{0}+s}{2}}(\|a^{\ell}\|_{\dot{B}^{\frac{d}{p}}_{2,1}}+\|a^{\ell}\|_{\dot{B}^{1-\frac{d}{p}}_{p,1}}
+\|a^{h}\|_{\dot{B}^{\frac{d}{p}}_{p,1}})\|\nabla H^{h}\|_{\dot{B}^{\frac{d}{p}-1}_{p,1}}d\tau.
\end{eqnarray*}
It follows from the definition of $\mathcal{D}_{p}(t)$ that
\begin{equation} \label{Eq:3-28}
\left\|\left(a,u,H\right)^{\ell}(\tau)\right\|_{\dot{B}^{\frac{d}{p}}_{2,1}}\lesssim \langle\tau\rangle^{-\left(\frac{3d}{2p}-\frac{d}{4}\right)}\mathcal{D}_{p}(\tau).
\end{equation}
Hence, with the aid of \eqref{Eq:3-20}, \eqref{Eq:3-22}, \eqref{Eq:3-23} and \eqref{Eq:3-28}, we conclude that
\begin{eqnarray*}
&&\int_{0}^{t}\langle t-\tau\rangle^{-\frac{s_{0}+s}{2}}\left\|\pi_{1}(a)\left(\nabla\left(I\cdot H^{h}\right)-I\cdot\nabla H^{h}\right)\right\|^{\ell}_{\dot{B}^{-s_{0}}_{2,\infty}}d\tau\\
&\lesssim & \mathcal{D}^{2}_{p}(t)\int_{0}^{t}\langle t-\tau\rangle^{-\frac{s_{0}+s}{2}}\tau^{-\frac{1}{2}}\langle\tau\rangle^{-\min\left(\frac{3d}{2p}-\frac{d}{4},\frac{1}{2}\right)-\frac{\alpha}{2}}d\tau.
\end{eqnarray*}
Due to the fact that $\frac{s_{0}+s}{2}\leq \frac{\alpha+1}{2}+\min\left(\frac{3d}{2p}-\frac{d}{4},\frac{1}{2}\right)$ for $p>d$ satisfying \eqref{Eq:1-4} and $s\leq\frac{d}{2}+1$, we obtain (thanks to \eqref{Eq:3-3})
\begin{equation*}
\int_{0}^{t}\langle t-\tau\rangle^{-\frac{s_{0}+s}{2}}\left\|\pi_{1}(a)\left(\nabla\left(I\cdot H^{h}\right)-I\cdot\nabla H^{h}\right)\right\|^{\ell}_{\dot{B}^{-s_{0}}_{2,\infty}}d\tau
\lesssim \langle t\rangle^{-\frac{s_{0}+s}{2}}\mathcal{D}^{2}_{p}(t).
\end{equation*}
Next, with aid of \eqref{Eq:3-19}, Propositions \ref{Prop2.2} and \ref{Prop2.4} and the embedding $\dot{B}^{s_{0}}_{p,1}\hookrightarrow L^{p^{*}}$, we arrive at
\begin{eqnarray*}
\left\|\frac {1}{1+a}H\right\|_{\dot{B}^{1-\frac{d}{p}}_{p,1}}
&\lesssim&\left(1+\left\|a\right\|_{\dot{B}^{\frac{d}{p}}_{p,1}}\right)
\left(\left\|H^{\ell}\right\|_{\dot{B}^{1-\frac{d}{p}}_{p,1}}+\left\|H^{h}\right\|_{\dot{B}^{1-\frac{d}{p}}_{p,1}}\right),\\
\left\|\dot{S}_{k_{0}+N_{0}}\left(\frac {1}{1+a}H\right)\right\|_{L^{p^{*}}}
&\lesssim&\left\|\dot{S}_{k_{0}+N_{0}}\left(\frac {1}{1+a}H\right)\right\|_{\dot{B}^{s_{0}}_{p,1}}
\lesssim\left\|\frac {1}{1+a}H\right\|_{\dot{B}^{\frac{d}{p}-1}_{p,1}}\\
&\lesssim&\left(1+\left\|a\right\|_{\dot{B}^{\frac{d}{p}}_{p,1}}\right)
\left(\left\|H^{\ell}\right\|_{\dot{B}^{\frac{d}{2}-1}_{2,1}}+\left\|H^{h}\right\|_{\dot{B}^{\frac{d}{p}-1}_{p,1}}\right),
\end{eqnarray*}
where we used the fact that $s_0\geq\frac{d}{p}-1$ due to $p\leq \frac{2d}{d-2}$. Consequently, we are led to the following inequality
\begin{eqnarray*}
&&\int_{0}^{t}\langle t-\tau\rangle^{-\frac{s_{0}+s}{2}}
\left\|\frac {1}{1+a}\left(\left(\nabla H^{h}\cdot H\right)^{T}-H\cdot\nabla H^{h}\right)\right\|^{\ell}_{\dot{B}^{-s_{0}}_{2,\infty}}d\tau\\
&\lesssim &\int_{0}^{t}\langle t-\tau\rangle^{-\frac{s_{0}+s}{2}}\left(\left\|H^{\ell}\right\|_{\dot{B}^{\frac{d}{2}-1}_{2,1}}
+\left\|H^{\ell}\right\|_{\dot{B}^{1-\frac{d}{p}}_{p,1}}+\left\|H^{h}\right\|_{\dot{B}^{1-\frac{d}{p}}_{p,1}}\right)\left\| H^{h}\right\|_{\dot{B}^{\frac{d}{p}}_{p,1}}d\tau.
\end{eqnarray*}
Since $p>d$, the interpolation inequality gives that
\begin{equation*}
\left\|\left(u,H\right)^{h}\right\|_{\dot{B}^{1-\frac{d}{p}}_{p,1}}\lesssim
\left\|\left(u,H\right)^{h}\right\|^{\frac{d}{p}}_{\dot{B}^{\frac{d}{p}-1}_{p,1}}
\left\|\left(u,H\right)^{h}\right\|^{1-\frac{d}{p}}_{\dot{B}^{\frac{d}{p}+1}_{p,1}},
\end{equation*}
which leads, thanks to the definition of $\mathcal{D}_{p}(t)$, to
\begin{equation}\label{Eq:3-29}
\left\|\left(u,H\right)^{h}(\tau)\right\|_{\dot{B}^{1-\frac{d}{p}}_{p,1}}\lesssim \tau^{-\left(1-\frac{d}{p}\right)}\langle\tau\rangle^{-\frac{d}{p}\alpha}\mathcal{D}_{p}(\tau)
\end{equation}
and
\begin{equation*}
\left\|H^{\ell}(\tau)\right\|_{\dot{B}^{\frac{d}{2}-1}_{2,1}}\lesssim \langle\tau\rangle^{-\left(\frac{d}{p}-\frac{1}{2}\right)}\mathcal{D}_{p}(\tau),
\end{equation*}
for all $\tau\geq0$. Consequently, we obtain
\begin{eqnarray*}
&&\int_{0}^{t}\langle t-\tau\rangle^{-\frac{s_{0}+s}{2}}\left\|\frac {1}{1+a}\left(\left(\nabla H^{h}\cdot H\right)^{T}-H\cdot\nabla H^{h}\right)\right\|^{\ell}_{\dot{B}^{-s_{0}}_{2,\infty}}d\tau\\
&\lesssim & \mathcal{D}^{2}_{p}(t)\int_{0}^{t}\langle t-\tau\rangle^{-\frac{s_{0}+s}{2}}\left(\langle\tau\rangle^{-\left(\frac{d}{p}-\frac{1}{2}\right)}
+\tau^{-\left(1-\frac{d}{p}\right)}\langle\tau\rangle^{-\frac{d}{p}\alpha}\right)
\tau^{-\frac{1}{2}}\langle\tau\rangle^{-\frac{\alpha}{2}}d\tau,
\end{eqnarray*}
where we have noticed that $\frac{d}{p}-\frac12<\frac12$. Recall $\alpha>1$ and $p>d$, it is not difficult to know that
$\frac{\alpha+3}{2}+\frac{d}{p}\left(\alpha-1\right)>\frac{\alpha}{2}+\frac{d}{p}>1$ and $0<\frac{3}{2}-\frac{d}{p}<1$. Inequality \eqref{Eq:3-3} thus ensures that
\begin{eqnarray*}
\int_{0}^{t}\langle t-\tau\rangle^{-\frac{s_{0}+s}{2}}
\left\|\frac {1}{1+a}\left(\left(\nabla H^{h}\cdot H\right)^{T}-H\cdot\nabla H^{h}\right)\right\|^{\ell}_{\dot{B}^{-s_{0}}_{2,\infty}}d\tau\lesssim \langle t\rangle^{-\frac{s_{0}+s}{2}}\mathcal{D}^{2}_{p}(t).
\end{eqnarray*}
Regarding the term with $-H\left(\mathrm{div}\,u^{h}\right)$, we note that, thanks to \eqref{Eq:3-27},
\begin{eqnarray*}
&&\int_{0}^{t}\langle t-\tau\rangle^{-\frac{s_{0}+s}{2}}\left\|H\left(\mathrm{div}\,u^{h}\right)\right\|_{\dot{B}^{-s_{0}}_{2,\infty}}d\tau\\
&\lesssim &\int_{0}^{t}\langle t-\tau\rangle^{-\frac{s_{0}+s}{2}}\left(\left\|H^{\ell}\right\|_{\dot{B}^{\frac{d}{p}}_{2,1}}+\left\|H^{\ell}\right\|_{\dot{B}^{1-\frac{d}{p}}_{p,1}}
+\left\|H^{h}\right\|_{\dot{B}^{1-\frac{d}{p}}_{p,1}}\right)\left\|\mathrm{div}\,u^{h}\right\|_{\dot{B}^{\frac{d}{p}-1}_{p,1}}d\tau.
\end{eqnarray*}
Therefore, combining with \eqref{Eq:3-3}, \eqref{Eq:3-20}, \eqref{Eq:3-23}, \eqref{Eq:3-28} and \eqref{Eq:3-29},
\begin{eqnarray*}
&&\int_{0}^{t}\langle t-\tau\rangle^{-\frac{s_{0}+s}{2}}\left\|H\left(\mathrm{div}\,u^{h}\right)\right\|_{\dot{B}^{-s_{0}}_{2,\infty}}d\tau\\
&\lesssim & \mathcal{D}^{2}_{p}(t)\int_{0}^{t}\langle t-\tau\rangle^{-\frac{s_{0}+s}{2}}\left(\langle\tau\rangle^{-\min\left(\frac{3d}{2p}-\frac{d}{4},\frac{1}{2}\right)}
+\tau^{-\left(1-\frac{d}{p}\right)}\langle\tau\rangle^{-\frac{d}{p}\alpha}\right)
\tau^{-\frac{1}{2}}\langle\tau\rangle^{-\frac{\alpha}{2}}d\tau\\
&\lesssim& \langle t\rangle^{-\frac{s_{0}+s}{2}} \mathcal{D}^{2}_{p}(t).
\end{eqnarray*}
The terms $H\cdot \nabla u^{h}$ and $-u\cdot \nabla H^{h}$ being completely similar to $-H\mathrm{div}\,u^{h}$.

Putting together all above estimates lead to the \eqref{Eq:3-16}. Then, combining with \eqref{Eq:3-15} for bounding the term of \eqref{Eq:3-5} pertaining to the data, we deduce that
\begin{equation} \label{Eq:3-30}
\langle t\rangle^{\frac{s_{0}+s}{2}}\left\|(a,u,H)(t)\right\|^{\ell}_{\dot{B}^{s}_{2,1}}
\lesssim \mathcal{D}_{p,0}+\mathcal{D}^{2}_{p}(t)+\mathcal{E}^{2}_{p}(t) \ \ \hbox{for all} \ \ t\geq0,
\end{equation}
provided that $-s_{0}<s\leq\frac{d}{2}+1$.

\subsection{Decay estimates for the high frequencies of  $\left(\nabla a,u,H\right)$}
In this section, we shall employ the $L^{p}$ energy approach
in terms of the effective velocity, to bound the second term of $\mathcal{D}_{p}(t)$.
The basic idea was initiated by Hoff and developed by Haspot for compressible Navier-Stokes equations.
For that purpose, we first recall the following elementary fact, which has been proved by \cite{DX}.
\begin{lem}\label{Lem3.3}
 Let $X:[0,T]\rightarrow \mathbb{R}_{+}$ be a continuous function. Suppose that $X^{p}$ is differentiable for some $p\geq1$ and fulfills
\begin{eqnarray*}
\frac{1}{p}\frac{d}{dt}X^{p}+QX^{p}\leq KX^{p-1}
\end{eqnarray*}
for some constant $Q\geq0$ and measurable function $K:[0,T]\rightarrow\mathbb{R}_{+}$.

Define $X_{\varepsilon}=\left(X^{p}+\varepsilon^{p}\right)^{\frac{1}{p}}$ for $\varepsilon>0$. Then it holds that
\begin{eqnarray*}
\frac{1}{p}\frac{d}{dt}X_{\varepsilon}+Q X_{\varepsilon}\leq K+Q\varepsilon.
\end{eqnarray*}
\end{lem}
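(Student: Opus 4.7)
The approach is to regularize $X$ into a strictly positive, differentiable proxy $X_\varepsilon$ and then transfer the given differential inequality for $X^{p}$ into one for $X_\varepsilon$ itself. Since $\varepsilon^{p}$ is constant in time and $X^{p}$ is differentiable by hypothesis, $X_\varepsilon^{p}=X^{p}+\varepsilon^{p}$ is differentiable with $\frac{d}{dt}X_\varepsilon^{p}=\frac{d}{dt}X^{p}$. Because $X_\varepsilon\geq\varepsilon>0$, the map $y\mapsto y^{1/p}$ is smooth at the relevant range, so $X_\varepsilon$ is differentiable and the chain rule yields $\frac{d}{dt}X_\varepsilon^{p}=p\,X_\varepsilon^{p-1}\frac{d}{dt}X_\varepsilon$.

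Plugging this identity into the hypothesis produces
\[
X_\varepsilon^{p-1}\frac{d}{dt}X_\varepsilon+QX^{p}\leq KX^{p-1}.
\]
The decisive algebraic step is the trivial identity $X^{p}=X_\varepsilon^{p}-\varepsilon^{p}$, which converts the dissipation term into $-QX^{p}=-QX_\varepsilon^{p}+Q\varepsilon^{p}$. For the source side, one uses the monotonicity $X\leq X_\varepsilon$ (obvious from $X^{p}\leq X^{p}+\varepsilon^{p}$) and $p\geq 1$ to majorize $X^{p-1}\leq X_\varepsilon^{p-1}$. Substituting these two estimates and dividing through by the strictly positive quantity $X_\varepsilon^{p-1}$ leads to
\[
\frac{d}{dt}X_\varepsilon+QX_\varepsilon\leq K+Q\,\frac{\varepsilon^{p}}{X_\varepsilon^{p-1}}.
\]
The residual term is then controlled by $Q\varepsilon$ using $X_\varepsilon\geq\varepsilon$, which gives $\varepsilon^{p}/X_\varepsilon^{p-1}\leq\varepsilon$, producing the desired inequality.

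There is no serious obstacle in this argument; it is essentially an exercise in regularization. The only point that deserves care is the chain-rule identity for $X_\varepsilon$, which is valid precisely because the perturbation by $\varepsilon^{p}$ bounds $X_\varepsilon$ away from zero, making $y\mapsto y^{1/p}$ smooth on the range of $X_\varepsilon^{p}$. The virtue of the lemma is that it upgrades a differential inequality on the (possibly degenerate at $X=0$) quantity $X^{p}$ into a linear Gr\"onwall-type inequality for $X_\varepsilon$, to which one can apply the integrating factor $e^{Qt}$ and afterwards let $\varepsilon\to 0$ to recover an estimate on $X$ itself.
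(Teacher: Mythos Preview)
Your argument is correct and is exactly the standard regularization proof. Note that the paper does not actually supply its own proof of this lemma; it simply recalls the statement and cites \cite{DX}. So there is nothing to compare against beyond confirming that your derivation is sound, which it is: the chain rule applies because $X_\varepsilon\geq\varepsilon>0$, the identity $X^{p}=X_\varepsilon^{p}-\varepsilon^{p}$ handles the dissipation term, and the two monotonicity bounds $X^{p-1}\leq X_\varepsilon^{p-1}$ and $\varepsilon^{p}/X_\varepsilon^{p-1}\leq\varepsilon$ finish the job.

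One small remark: the inequality you obtain is
\[
\frac{d}{dt}X_\varepsilon+QX_\varepsilon\leq K+Q\varepsilon,
\]
whereas the lemma as printed carries an extra factor $\tfrac{1}{p}$ in front of $\tfrac{d}{dt}X_\varepsilon$. That factor is almost certainly a typographical slip in the paper: the applications of the lemma (see \eqref{Eq:3-33}--\eqref{Eq:3-35}) all use the version without $\tfrac{1}{p}$, which is precisely what your computation yields.
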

Let $\mathcal{P}\triangleq \mathcal{I}_{d}+\nabla\left(-\Delta\right)^{-1}\mathrm{div}$
be the Leray projector onto divergence-free vector-fields. It follows from \eqref{Eq:1-6}
that $\mathcal{P}u$ fulfills
\begin{equation} \label{Eq:3-31}
\partial _{t}\mathcal{P}u-\mu_{\infty} \Delta \mathcal{P}u-I\cdot\nabla H=\mathcal{P}g.
\end{equation}
where we used the equality $\mathcal{P}(I\cdot\nabla H)=I\cdot\nabla H$. Applying the operator $\dot{\Delta}_{k}$ to \eqref{Eq:3-31} gives for all $k\in\mathbb{Z}$,
\begin{equation}\label{Eq:3-32}
\partial _{t}\mathcal{P}u_{k}-\mu_{\infty} \Delta \mathcal{P}u_{k}-I\cdot\nabla H_{k}=\mathcal{P}g_{k},
\end{equation}
where $u_{k}\triangleq\dot{\Delta}_{k}u, \ H_{k}\triangleq\dot{\Delta}_{k}H $\ and  $ g_{k}\triangleq\dot{\Delta}_{k}g.$

Multiplying each component of equation \eqref{Eq:3-32} by $|(\mathcal{P}u_{k})^{j}|^{p-2}(\mathcal{P}u)^{j}$ and integrating over $\mathbb{R}^{d}$ yields for $j=1,\, 2,\, \cdots,\, d$,
\begin{eqnarray*}
&&\frac{1}{p}\frac{d}{dt}\left\|\left(\mathcal{P}u_{k}\right)^{j}\right\|^{p}_{L^{p}}
-\mu_{\infty}\int\Delta\left(\mathcal{P}u_{k}\right)^{j}\left|\left(\mathcal{P}u_{k}\right)^{j}\right|^{p-2}\left(\mathcal{P}u_{k}\right)^{j}dx\\
&=&\int\mathcal{P}g^{j}_{k}\left|\left(\mathcal{P}u_{k}\right)^{j}\right|^{p-2}\left(\mathcal{P}u_{k}\right)^{j}dx+\int\left(I\cdot\nabla H_{k}\right)^{j}\left|\left(\mathcal{P}u_{k}\right)^{j}\right|^{p-2}\left(\mathcal{P}u_{k}\right)^{j}dx.
\end{eqnarray*}
The key observation is that the second term of the l.h.s., although not spectrally localized, may be handled from Proposition \ref{Prop2.5}.
After summation on $j=1,\, 2,\, \cdots,\, d$, we end up for some constant $c_{p}$ depending only $p$ with
\begin{equation*}
\frac{1}{p}\frac{d}{dt}\left\|\mathcal{P}u_{k}\right\|^{p}_{L^{p}}+c_{p}\mu_{\infty}2^{2k}\left\|\mathcal{P}u_{k}\right\|^{p}_{L^{p}}
\leq\left(\left\|\mathcal{P}g_{k}\right\|_{L^{p}}+\left\|\nabla H_{k}\right\|_{L^{p}}\right)\left\|\mathcal{P}u_{k}\right\|^{p-1}_{L^{p}}.
\end{equation*}
Hence, using the notation $\|\cdot\|_{\varepsilon,L^{p}}\triangleq(\|\cdot\|^{p}_{\varepsilon,L^{p}}+\varepsilon^{p})^{\frac{1}{p}}$, it
follows from Lemma \ref{Lem3.3} and Bernstein inequality that for all $\varepsilon>0$
\begin{equation}\label{Eq:3-33}
\frac{d}{dt}\left\|\mathcal{P}u_{k}\right\|_{\varepsilon,L^{p}}+c_{p}\mu_{\infty}2^{2k}\left\|\mathcal{P}u_{k}\right\|_{\varepsilon,L^{p}}
\leq\left\|\mathcal{P}g_{k}\right\|_{L^{p}}+C2^{k}\left\|H_{k}\right\|_{L^{p}}+c_{p}\mu_{\infty}2^{2k}\varepsilon.
\end{equation}

Here, following from Haspot's approach in \cite{HB}, we introduce the effective velocity $w$ defined by \eqref{Eq:3-1}. Then the
magnetic field $H$ satisfies
$$
\partial _{t} H-\Delta H =m-\left(\mathrm{div}\,w\right)I+I\cdot\nabla w-aI-I\cdot\nabla^2(-\Delta)^{-1}a+I\cdot\nabla\mathcal{P}u.
$$
Consequently, arguing exactly as for proving \eqref{Eq:3-33}, it follows that
\begin{eqnarray}
\frac{d}{dt}\left\|H_{k}\right\|_{\varepsilon,L^{p}}+c_{p}2^{2k}\left\|H_{k}\right\|_{\varepsilon,L^{p}}
&\leq&\left\|m_{k}\right\|_{L^{p}}+C2^{k}\left(\left\|w_{k}\right\|_{L^{p}}+\left\|\mathcal{P}u_{k}\right\|_{L^{p}}\right)\nonumber\\
\label{Eq:3-34}
&&+C2^{-k}\left\|\nabla a_{k}\right\|_{L^{p}}+c_{p}2^{2k}\varepsilon.
\end{eqnarray}
Additionally, we observe that $(a,w)$ satisfies
\begin{equation*}
\left\{
\begin{array}{l}
\partial _{t}w-\Delta w=\nabla\left(-\Delta \right)^{-1}\left(f-\mathrm{div}\,g\right) +w-\left(-\Delta \right)^{-1}\nabla a-\nabla\left(I\cdot H\right), \\
\partial _{t}a+a=f-\mathrm{div}\,w,
\end{array}
\right.
\end{equation*}
where we know $-\nabla(-\Delta)^{-1}\div (I\cdot\nabla H)=0$ as $\div H=0$. So arguing as for $\mathcal{P}u$, we can arrive at
\begin{eqnarray}
&&\frac{d}{dt}\left\|w_{k}\right\|_{\varepsilon,L^{p}}+c_{p}2^{2k}\left\|w_{k}\right\|_{\varepsilon,L^{p}}
\nonumber\\
&\leq& \left\|\nabla\left(-\Delta\right)^{-1}\left(f_{k}-\mathrm{div}\,g_{k}\right)\right\|_{L^{p}}
+\left\|w_{k}-\left(-\Delta\right)^{-1}\nabla a_{k}\right\|_{L^{p}}\nonumber\\
\label{Eq:3-35}
&&+C2^{k}\left\|H_{k}\right\|_{L^{p}}+c_{p}2^{2k}\varepsilon.
\end{eqnarray}
Moreover, it follows from \cite{DX} that
\begin{eqnarray*}
\frac{d}{dt}\left\|\nabla a_{k}\right\|_{\varepsilon,L^{p}}+\left\|\nabla a_{k}\right\|_{\varepsilon,L^{p}}
&\leq&\frac{1}{p}\left\| \mathrm{div}\,u\right\|_{L^\infty }\left\|a_{k}\right\|_{L^{p}}
+\left\|\nabla\dot{\Delta}_{k}(a\,\mathrm{div}\,u)\right\|_{L^{p}} \nonumber\\
&&+C2^{2k}\left\|w_{k}\right\|_{L^{p}}+\left\|R_{k}\right\|_{L^{p}}+\varepsilon,
\end{eqnarray*}
where we  denote $\dot{R}^{j}_{k}\triangleq \left[u\cdot \nabla ,\partial_{j}\dot{\Delta}_{k}\right]a$ for $j=1,\, 2,\, \cdots,\, d$.
Adding up that inequality (multiplied by $\eta c_{p}$) for some $\eta>0$ to \eqref{Eq:3-33}, \eqref{Eq:3-34} and \eqref{Eq:3-35} gives
\begin{eqnarray}
&&\frac{d}{dt}\left(\left\|\mathcal{P}u_{k}\right\|_{\varepsilon,L^{p}}+\left\|w_{k}\right\|_{\varepsilon,L^{p}}+\left\|H_{k}\right\|_{\varepsilon,L^{p}}+\eta c_{p}\left\|\nabla a_{k}\right\|_{\varepsilon,L^{p}}\right) \nonumber \\
&&+c_{p}2^{2k}\left(\mu_{\infty}\left\|\mathcal{P}u_{k}\right\|_{\varepsilon,L^{p}}
+\left\|w_{k}\right\|_{\varepsilon,L^{p}}+\left\|H_{k}\right\|_{\varepsilon,L^{p}}\right)
+\eta c_{p}\left\|\nabla a_{k}\right\|_{\varepsilon,L^{p}} \nonumber \\
&\leq&\left\|\mathcal{P}g_{k}\right\|_{L^{p}}+\left\|\nabla(-\Delta)^{-1}\left(f_{k}-\mathrm{div}\,g_{k}\right)\right\|_{L^{p}}
+\left\|m_{k}\right\|_{L^{p}}+\eta c_{p}\left(\frac{1}{p}\left\|\mathrm{div}\,u\right\|_{L^{\infty}}\left\|\nabla a_{k}\right\|_{L^{p}}\right. \nonumber \\
&&\left.+\left\|\nabla\dot{\Delta}_{k}\left(a\,\mathrm{div}\,u\right)\right\|_{L^{p}}+\left\|R_{k}\right\|_{L^{p}}\right)+N_\varepsilon+C\eta c_{p}2^{2k}\|w_{k}\|_{L^{p}}+C2^{-k}\left\|\nabla a_{k}\right\|_{L^{p}}\nonumber\\
\label{Eq:3-36}
&&+\left\|w_{k}-(-\Delta)^{-1}\nabla a_{k}\right\|_{L^{p}}
+C2^{k}\left(\left\|w_{k}\right\|_{L^{p}}+\left\|\mathcal{P}u_{k}\right\|_{L^{p}}
+2\left\|H_{k}\right\|_{L^{p}}\right),
\end{eqnarray}
where $N_\varepsilon\triangleq \left(c_{p}\mu_{\infty}2^{2k}+2c_{p}2^{2k}+\eta c_{p}\right)\varepsilon.$

As we know, $(-\Delta)^{-1}$ is a homogeneous Fourier multiplier of degree $-2$, and then we have
\begin{equation*}
\left\|\left(-\Delta \right)^{-1}\nabla a_{k}\right\| _{L^{p}}\lesssim 2^{-2k}\left\|\nabla a_{k}\right\| _{L^{p}}\lesssim 2^{-2k_{0}}\left\| \nabla a_{k}\right\| _{\varepsilon,L^{p}}
\ \ \hbox{for all} \ \  k\geq k_{0}-1.
\end{equation*}
Choosing $k_{0}$ suitably large and $\eta$ small enough, we deduce that the last six terms of the r.h.s. of \eqref{Eq:3-36} may be absorbed by the l.h.s. Hence, remembering that $f_{k}=\dot{\Delta}_{k}\mathrm{div}\left(au\right)$, we discover that there exist some $k_{0}\in\mathbb{Z}$ and $c_{0}>0$ such that for all $k\geq k_{0}-1$,
\begin{eqnarray*}
&&\frac{d}{dt}\left(\left\|\mathcal{P}u_{k}\right\|_{\varepsilon,L^{p}}+\left\|w_{k}\right\|_{\varepsilon,L^{p}}
+\left\|H_{k}\right\|_{\varepsilon,L^{p}}+\eta c_{p}\left\|\nabla a_{k}\right\|_{\varepsilon,L^{p}}\right)\\
&&+c_{0}\left(\left\|\mathcal{P}u_{k}\right\|_{\varepsilon,L^{p}}+\left\|w_{k}\right\|_{\varepsilon,L^{p}}+\left\|H_{k}\right\|_{\varepsilon,L^{p}}
+\eta c_{p}\left\|\nabla a_{k}\right\|_{\varepsilon,L^{p}}\right)\\
&\leq&\left\|\dot{\Delta}_{k}(au)\right\|_{L^{p}}+\left\|g_{k}\right\|_{L^{p}}+\left\|m_{k}\right\|_{L^{p}}+\eta c_{p}\left(\frac{1}{p}\left\|\mathrm{div}\,u\right\|_{L^{\infty}}\left\|\nabla a_{k}\right\|_{L^{p}}\right.\\
&&\left.+\left\|\nabla\dot{\Delta}_{k}\left(a\,\mathrm{div}\,u\right)\right\|_{L^{p}}+\left\|R_{k}\right\|_{L^{p}}\right)+N_\varepsilon.
\end{eqnarray*}
Then, integrating in time and having $\varepsilon$ tend to $0$, we are led to
\begin{eqnarray}
&&e^{c_{0}t}\left\|\left(\mathcal{P}u_{k},w_{k},H_{k},\nabla a_{k}\right)(t)\right\|_{L^{p}}\nonumber\\
\label{Eq:3-37}
&\lesssim &\left\|\left(\mathcal{P}u_{k},w_{k},H_{k},\nabla a_{k}\right)(0)\right\|_{L^{p}}+\int_{0}^{t}e^{c_{0}\tau}Z_{k}(\tau)d\tau
\end{eqnarray}
with $Z_{k}\triangleq Z^{1}_{k}+ \cdots + Z^{6}_{k}$ and
\begin{eqnarray*}
&&Z^{1}_{k}\triangleq\left\|\dot{\Delta}_{k}\left(au\right)\right\|_{L^{p}},\  \ \ \ \ \ \ \ \, \, \,
Z^{2}_{k}\triangleq\left\|g_{k}\right\|_{L^{p}},\  \ \ \ Z^{3}_{k}\triangleq\left\|m_{k}\right\|_{L^{p}},\\
&&Z^{4}_{k}\triangleq\left\|\nabla \dot{\Delta}_{k}\left(a\,\mathrm{div}\,u\right)\right\|_{L^{p}},\ \ Z^{5}_{k}\triangleq\left\|R_{k}\right\|_{L^{p}},\ \  \ \ Z^{6}_{k}\triangleq\left\|\mathrm{div}\,u\right\|_{L^{\infty}}\left\|\nabla a_{k}\right\|_{L^{p}}.
\end{eqnarray*}
It follows the definition of $w$ that
\begin{eqnarray*}
u=w-\nabla \left(-\Delta\right)^{-1} a+\mathcal{P}u,
\end{eqnarray*}
which leads for $k\geq k_{0}-1$ to
\begin{eqnarray*}
\left\|u_{k}-\left(w_{k}+\mathcal{P}u_{k}\right)\right\|_{L^{p}}\lesssim 2^{-2k_{0}}\left\|\nabla a_{k}\right\|_{L^{p}}.
\end{eqnarray*}
So we easily see that $\left(\nabla a_{k}, u_{k},H_{k}\right)$ fulfills a similar inequality as \eqref{Eq:3-37}.
Furthermore, it is shown that there exists a constant $c_{0}>0$ such that for all $k\geq k_{0}-1$ and $t\geq 0$,
\begin{equation}\label{Eq:3-38}
\left\|\left(\nabla a_{k},u_{k},H_{k}\right)(t)\right\|_{L^{p}}\lesssim e^{-c_{0}t}\left\|\left(\nabla a_{k},u_{k},H_{k}\right)(0)\right\|_{L^{p}}+\int_{0}^{t}e^{-c_{0}\left(t-\tau\right)}Z_{k}(\tau)d\tau.
\end{equation}
Multiplying both sides of \eqref{Eq:3-38} by $\langle t\rangle^{\alpha}2^{k\left(\frac{d}{p}-1\right)}$, taking the supremum on $[0,T]$ and then summing up over $k\geq k_{0}-1$ yields
\begin{eqnarray}
&&\left\|\langle t\rangle^{\alpha}(\nabla a,u,H)\right\|^{h}_{\tilde{L}^{\infty}_{T}(\dot{B}^{\frac{d}{p}-1}_{p,1})} \nonumber\\
\label{Eq:3-39}
\hspace{5mm}&\lesssim&\|(\nabla a_{0},u_{0},H_{0})\|^{h}_{\dot{B}^{\frac{d}{p}-1}_{p,1}}
+\sum_{k\geq k_{0}-1}\sup_{0\leq t\leq T}\left(\langle t\rangle^{\alpha}\int_{0}^{t}e^{-c_{0}(t-\tau)}2^{k\left(\frac{d}{p}-1\right)}Z_{k}(\tau)d\tau\right).
\end{eqnarray}
Firstly, we notice that
\begin{equation*}
\sum_{k\geq k_{0}-1}\sup_{0\leq t\leq 2}\left(\langle t\rangle^{\alpha}\int_{0}^{t}e^{-c_{0}(t-\tau)}2^{k\left(\frac{d}{p}-1\right)}Z_{k}(\tau)d\tau\right)
\lesssim \int_{0}^{2}\sum_{k\geq k_{0}-1}2^{k\left(\frac{d}{p}-1\right)}Z_{k}(\tau)d\tau.
\end{equation*}
Furthermore, it follows from Propositions \ref{Prop2.2} and \ref{Prop2.6} that
\begin{equation}\label{Eq:3-40}
\int_{0}^{2}\sum_{k\geq k_{0}-1}2^{k\left(\frac{d}{p}-1\right)}Z_{k}(\tau)d\tau
\lesssim \int_{0}^{2}\left(\left\|\left(au,g,m\right)\right\|^{h}_{\dot{B}^{\frac{d}{p}-1}_{p,1}}
+\left\|\nabla u\right\|_{\dot{B}^{\frac{d}{p}}_{p,1}}\left\|a\right\|_{\dot{B}^{\frac{d}{p}}_{p,1}}\right)d\tau.
\end{equation}
It is obvious that the last term of the r.h.s. of \eqref{Eq:3-40} may be bounded by $C\mathcal{E}^{2}_{p}(2)$ and that, thanks to Proposition \ref{Prop2.2}, we arrive at
\begin{equation*}
\left\|au\right\|_{L_{t}^{1}(\dot{B}_{p,1}^{\frac {d}{p}-1})}^{h}\lesssim \left\|au\right\|_{L_{t}^{1} (\dot{B}_{p,1}^{\frac {d}{p}})}
\lesssim \left\|a\right\|_{L_{t}^{2} (\dot{B}_{p,1}^{\frac {d}{p}})}\left\|u\right\|_{L_{t}^{2} (\dot{B}_{p,1}^{\frac {d}{p}})}.
\end{equation*}
Additionally, applying Propositions \ref{Prop2.2} and \ref{Prop2.4} yields
\begin{eqnarray*}
\left\|g\right\|_{L_{t}^{1}(\dot{B}_{p,1}^{\frac {d}{p}-1})}^{h}
&\lesssim&\left(\left\|u\right\|_{L_{t}^{\infty} (\dot{B}_{p,1}^{\frac {d}{p}-1})}\left\|\nabla u\right\|_{L_{t}^{1} (\dot{B}_{p,1}^{\frac {d}{p}})}+\left\|a\right\|_{L_{t}^{\infty} (\dot{B}_{p,1}^{\frac {d}{p}})}\left\|\nabla u\right\|_{L_{t}^{1} (\dot{B}_{p,1}^{\frac {d}{p}})}\right.\\
&&+\left\|a\right\|_{L_{t}^{2} (\dot{B}_{p,1}^{\frac {d}{p}})}\left\|\nabla a\right\|_{L_{t}^{2} (\dot{B}_{p,1}^{\frac {d}{p}-1})}
+\left\|a\right\|_{L_{t}^{2} (\dot{B}_{p,1}^{\frac {d}{p}})}\left\|\nabla H\right\|_{L_{t}^{2} (\dot{B}_{p,1}^{\frac {d}{p}-1})}\\
&&\left.+\left\|H\right\|_{L_{t}^{\infty} (\dot{B}_{p,1}^{\frac {d}{p}-1})}\left\|\nabla H\right\|_{L_{t}^{1} (\dot{B}_{p,1}^{\frac {d}{p}})}\right),\\
\left\|m\right\|^{h} _{L_{t}^{1}(\dot{B}_{p,1}^{\frac {d}{p}-1})} &\lesssim &\left(\left\| H\right\| _{L_{t}^{\infty} (\dot{B}_{p,1}^{\frac {d}{p}-1})}\left\|\nabla u\right\|_{L_{t}^{1}(\dot{B}_{p,1}^{\frac{d}{p}})}+\left\|u\right\|
_{L_{t}^{\infty}(\dot{B}_{p,1}^{\frac {d}{p}-1})}\left\|\nabla H\right\|_{L_{t}^{1}(\dot{B}_{p,1}^{\frac {d}{p}})}\right).
\end{eqnarray*}
By employing interpolation equality, it is easy to know that $\left\|(a,u,H)\right\|_{L_{t}^{2} (\dot{B}_{p,1}^{\frac {d}{p}})}\lesssim \mathcal{E}_{p}(t)$.
Furthermore, we can conclude that the first term in the right-hand side of \eqref{Eq:3-40} may be bounded by $\mathcal{E}^{2}_{p}(2)$. We end up with
\begin{equation}\label{Eq:3-41}
\sum_{k\geq k_{0}-1}\sup_{0\leq t\leq 2}\left(\langle t\rangle^{\alpha}\int_{0}^{t}e^{-c_{0}(t-\tau)}2^{k\left(\frac{d}{p}-1\right)}Z_{k}(\tau)d\tau\right)
\lesssim \mathcal{E}^{2}_{p}(2).
\end{equation}
Secondly, let us handle the supremum for $2\leq t\leq T$ in the last term of \eqref{Eq:3-39}, supposing (without loss of generality) that $T\geq2$. To do this, one can split the integral on $[0,t]$ into integrals $[0,1]$ and $[1,t]$. The $[0,1]$ part of the integral
can be bounded exactly as the supremum on $[0,2]$ treated before. Indeed, owing to $e^{-c_{0}(t-\tau)}\leq e^{-\frac{c_{0}t}{2}}$ for $2\leq t \leq T$ and $0\leq \tau \leq 1$, we can obtain
\begin{eqnarray*}
&&\sum_{k\geq k_{0}-1}\sup_{2\leq t\leq T}\left(\langle t\rangle^{\alpha}\int_{0}^{1}e^{-c_{0}(t-\tau)}2^{k\left(\frac{d}{p}-1\right)}Z_{k}(\tau)d\tau\right)\\
&\lesssim&\sum_{k\geq k_{0}-1}\sup_{2\leq t\leq T}\left(\langle t\rangle^{\alpha}e^{-\frac{c_{0}}{2}t}\int_{0}^{1}2^{k\left(\frac{d}{p}-1\right)}Z_{k}(\tau)d\tau\right)\lesssim \int_{0}^{1} \sum_{k\geq k_{0}-1} 2^{k\left(\frac{d}{p}-1\right)}Z_{k}(\tau)d\tau.
\end{eqnarray*}
Hence, following the procedure leading to \eqref{Eq:3-41}, we finally arrive at
\begin{equation} \label{Eq:3-42}
\sum_{k\geq k_{0}-1}\sup_{2\leq t\leq T}\left(\langle t\rangle^{\alpha}\int_{0}^{1}e^{-c_{0}(t-\tau)}2^{k\left(\frac{d}{p}-1\right)}Z_{k}(\tau)d\tau\right)
\lesssim \mathcal{E}^{2}_{p}(1).
\end{equation}
In order to handle the integral on $[1,t]$ for $2\leq t\leq T$, we use \eqref{Eq:3-3} to get
\begin{eqnarray}
&&\sum_{k\geq k_{0}-1}\sup_{2\leq t\leq T}\left(\langle t\rangle^{\alpha}\int_{1}^{t}e^{-c_{0}(t-\tau)}2^{k(\frac{d}{p}-1)}Z_{k}(\tau)d\tau\right)
\nonumber\\
\label{Eq:3-43}
&\lesssim& \sum_{k\geq k_{0}-1} 2^{k(\frac{d}{p}-1)}\sup_{1\leq t\leq T}t^{\alpha}Z_{k}(t).
\end{eqnarray}
In what follows, we claim the following two inequalities
\begin{equation} \label{Eq:3-44}
\left\|\tau \nabla \left(u,H\right)\right\|_{\tilde{L}^{\infty}_{t}(\dot{B}^{\frac{d}{p}}_{p,1})}\lesssim \mathcal{D}_{p}(t),\ \ \ \left\|\tau^{\alpha-1}\left(\nabla a,u,H\right)\right\|_{\tilde{L}^{\infty}_{t}(\dot{B}^{\frac{d}{p}-1}_{p,1})}
\lesssim\mathcal{D}_{p}(t).
\end{equation}
Indeed, it is clear that
\begin{equation*}
\left\|\tau\nabla\left(u,H\right)\right\|^{h}_{\tilde{L}^{\infty}_{t}(\dot{B}^{\frac{d}{p}}_{p,1})}\lesssim\mathcal{D}_{p}(t),\ \ \ \left\|\tau^{\alpha-1}\left(\nabla a,u,H\right)\right\|^{h}_{\tilde{L}^{\infty}_{t}(\dot{B}^{\frac{d}{p}-1}_{p,1})}
\lesssim\mathcal{D}_{p}(t).
\end{equation*}
On the other hand, for sufficiently small $\varepsilon$, we have
\begin{eqnarray*}
\left\|\tau \nabla (u,H)\right\|^{\ell}_{\tilde{L}^{\infty}_{t}(\dot{B}^{\frac{d}{p}}_{p,1})}
&\lesssim &\left\|\tau (u,H)\right\|^{\ell}_{\tilde{L}^{\infty}_{t}(\dot{B}^{\frac{d}{2}+1}_{2,1})}
\lesssim \left\|\tau (u,H)\right\|^{\ell}_{L^{\infty}_{t}(\dot{B}^{\frac{d}{2}+1-2\varepsilon}_{2,1})}\\
&\lesssim &\left\|\langle \tau\rangle^{\frac{d}{p}+\frac{1}{2}-\varepsilon}(u,H)\right\|^{\ell}_{L^{\infty}_{t}(\dot{B}^{\frac{d}{2}+1-2\varepsilon}_{2,1})}
\lesssim \mathcal{D}_{p}(t),\\
\left\|\tau^{\alpha-1}\left(\nabla a,u,H\right)\right\|^{\ell}_{\tilde{L}^{\infty}_{t}(\dot{B}^{\frac{d}{p}-1}_{p,1})}
&\lesssim &\left\|\tau^{\alpha-1}(a,u,H)\right\|^{\ell}_{\bar{L}^{\infty}_{t}(\dot{B}^{\frac{d}{2}-1}_{2,1})}\\
&\lesssim& \left\|\tau^{\alpha-1}(a,u,H)\right\|^{\ell}_{L^{\infty}_{t}(\dot{B}^{\frac{d}{2}-1-2\varepsilon}_{2,1})} \\
&\lesssim &\left\|\langle\tau\rangle^{\frac{s_{0}}{2}+\frac{d}{4}-\frac{1}{2}-\varepsilon}(a,u,H)\right\|^{\ell}_{L^{\infty}_{t}(\dot{B}^{\frac{d}{2}-1-2\varepsilon}_{2,1})}
\lesssim \mathcal{D}_{p}(t).
\end{eqnarray*}

To bound the right-side of \eqref{Eq:3-43}, it only need to estimate the new nonlinear terms (say, $g_{5}$, $g_{6}$ and $m$), which are not available in the usual compressible Navier-Stokes equations. It follows from Propositions \ref{Prop2.2}, \ref{Prop2.4} and \eqref{Eq:3-4}, \eqref{Eq:3-44} that
\begin{eqnarray*}
\left\|t^{\alpha}g_{5}\left(a,H\right)\right\|^{h}_{\tilde{L}^{\infty}_{T}(\dot{B}^{\frac{d}{p}-1}_{p,1})}
&\lesssim &\left\|t^{\alpha}\pi_{1}(a)\left(\nabla\left(I\cdot H\right)-I\cdot\nabla H\right)\right\|_{\tilde{L}^{\infty}_{T}(\dot{B}^{\frac{d}{p}}_{p,1})}\\
&\lesssim&\left\|t^{\alpha-1}a\right\|_{\tilde{L}^{\infty}_{T}(\dot{B}^{\frac{d}{p}}_{p,1})}
\left\|t \nabla H\right\|_{\tilde{L}^{\infty}_{T}(\dot{B}^{\frac{d}{p}}_{p,1})}\lesssim \mathcal{D}^{2}_{p}(T),\\
\left\|t^{\alpha}g_{6}\left(a,H\right)\right\|^{h}_{\tilde{L}^{\infty}_{T}(\dot{B}^{\frac{d}{p}-1}_{p,1})}
&\lesssim &\left\|\frac {t^{\alpha}}{1+a}\left(\left(\nabla H\cdot H\right)^{T}-H\cdot\nabla H\right)\right\|_{\tilde{L}^{\infty}_{T}(\dot{B}^{\frac{d}{p}-1}_{p,1})}\\
&\lesssim & (1+\left\|a\right\|_{\tilde{L}^{\infty}_{T}(\dot{B}^{\frac{d}{p}}_{p,1})})
\left\|t^{\alpha-1}H\right\|_{\tilde{L}^{\infty}_{T}(\dot{B}^{\frac{d}{p}-1}_{p,1})}
\left\|t \nabla H\right\|_{\tilde{L}^{\infty}_{T}(\dot{B}^{\frac{d}{p}}_{p,1})}\\
&\lesssim & \left\|t^{\alpha-1}H\right\|_{\tilde{L}^{\infty}_{T}(\dot{B}^{\frac{d}{p}-1}_{p,1})}
\left\|t\nabla H\right\|_{\tilde{L}^{\infty}_{T}(\dot{B}^{\frac{d}{p}}_{p,1})}\lesssim \mathcal{D}^{2}_{p}(T).
\end{eqnarray*}
Regarding those terms in $m\left(u,H\right)$:
$-H\left(\mathrm{div}\,u\right),  H\cdot\nabla u$ and  $-u\cdot\nabla H$,
with aid of Proposition \ref{Prop2.2} and \eqref{Eq:3-44}, we can get similarly
\begin{eqnarray*}
\left\|t^{\alpha}H\left(\mathrm{div}\,u\right)\right\|^{h}_{\tilde{L}^{\infty}_{T}(\dot{B}^{\frac{d}{p}-1}_{p,1})}
\lesssim \left\|t^{\alpha-1}H\right\|_{\tilde{L}^{\infty}_{T}(\dot{B}^{\frac{d}{p}-1}_{p,1})}
\left\|t\nabla u\right\|_{\tilde{L}^{\infty}_{T}(\dot{B}^{\frac{d}{p}}_{p,1})}\lesssim \mathcal{D}^{2}_{p}(T).
\end{eqnarray*}
Bounding $H\cdot\nabla u$ and $-u\cdot\nabla H$ follows from the same argument, and are thus omitted. Other terms, for example,
with $Z^{1}_{k},Z^{4}_{k},Z^{5}_{k}$ and $Z^{6}_{k}$ have been treated by the joint work with Danchin (see \cite{DX} for more details).
Consequently, putting all estimates together, we have
\begin{equation} \label{Eq:3-45}
\sum_{k\geq k_{0}-1}2^{k\left(\frac{d}{p}-1\right)}\sup_{1\leq t\leq T} t^{\alpha}Z_{k}(t)
\lesssim \mathcal{E}_{p}(T)\mathcal{D}_{p}(T)+\mathcal{D}^{2}_{p}(T).
\end{equation}
Plugging \eqref{Eq:3-45} in \eqref{Eq:3-43}, and remembering \eqref{Eq:3-39}, \eqref{Eq:3-41} and \eqref{Eq:3-42}, we conclude that
\begin{equation}\label{Eq:3-46}
\left\|\langle t\rangle^{\alpha}\left(\nabla a,u,H\right)\right\|^{h}_{\tilde{L}^{\infty}_{T}(\dot{B}^{\frac{d}{p}-1}_{p,1})}
\lesssim \left\|\left(\nabla a_{0},u_{0},H_{0}\right)\right\|^{h}_{\dot{B}^{\frac{d}{p}-1}_{p,1}}+\mathcal{E}^{2}_{p}(T)+\mathcal{D}^{2}_{p}(T).
\end{equation}

\subsection{Decay and gain of regularity for the high frequencies of $(u,H)$}
In order to estimate the last term in $\mathcal{D}_{p}(t)$, we shall take full advantage of the parabolic smoothing effect. To this end,
we see that the couple $(u,H)$ fulfills
\begin{equation}\label{Eq:3-47}
\left\{
\begin{array}{l}
\partial _{t}u-\mathcal{A}u=g-\nabla(I\cdot H)+I\cdot\nabla H-\nabla a, \\
\partial _{t}H-\Delta H=m-\left(\div u\right)I+I\cdot\nabla u.
\end{array}
\right.
\end{equation}
To obtain the desired estimates, we reformulate \eqref{Eq:3-47} as follows:
\begin{equation*}
\left\{
\begin{array}{l}
\partial _{t}\left(t\mathcal{A}u\right)-\mathcal{A}\left(t\mathcal{A}u\right)=t\mathcal{A}g-t\mathcal{A}\nabla(I\cdot H)+t\mathcal{A}(I\cdot\nabla H)+\mathcal{A}u-t\mathcal{A}\nabla a, \\
\partial _{t}\left(t\Delta H\right)-\Delta\left(t\Delta H\right)=t\Delta m-t\Delta\left(\mathrm{div}\,u\right)I+t\Delta\left(I\cdot\nabla u\right)+\Delta H\\
\left(t\mathcal{A}u,t\Delta H\right)|_{t=0}=\left(0,0\right).
\end{array}
\right.
\end{equation*}
Taking advantage of Proposition \ref{Prop2.7}, Remark \ref{Rem2.2} and Bernstein inequality, we have for $k\geq k_{0}-1$,
\begin{eqnarray}
\left\|\tau \nabla^{2}\left(u,H\right)\right\|^{h}_{\tilde{L}^{\infty}_{t}(\dot{B}^{\frac{d}{p}-1}_{p,1})}
&\lesssim &\left\|\left(u,H\right)\right\|^{h}_{L^{1}_{t}(\dot{B}^{\frac{d}{p}+1}_{p,1})}
+\left\|\tau\left(g,m\right)\right\|^{h}_{\tilde{L}^{\infty}_{t}(\dot{B}^{\frac{d}{p}-1}_{p,1})} \nonumber \\
\label{Eq:3-48}
&&+\left\|\tau \nabla a\right\|^{h}_{\tilde{L}^{\infty}_{t}(\dot{B}^{\frac{d}{p}-1}_{p,1})}
+C2^{-k_{0}}\left\|\tau\nabla\left(u,H\right)\right\|^{h}_{\tilde{L}^{\infty}_{t}(\dot{B}^{\frac{d}{p}}_{p,1})}.
\end{eqnarray}
It is obvious that the last term on the right side of \eqref{Eq:3-48} may be absorbed by the l.h.s. if $k_{0}$ is large enough.
Hence, we are led to
\begin{eqnarray}
\left\|\tau \nabla(u,H)\right\|^{h}_{\tilde{L}^{\infty}_{t}(\dot{B}^{\frac{d}{p}}_{p,1})}
&\lesssim&\left\|(u,H)\right\|^{h}_{L^{1}_{t}(\dot{B}^{\frac{d}{p}+1}_{p,1})}
+\left\|\tau\left(g,m\right)\right\|^{h}_{\tilde{L}^{\infty}_{t}(\dot{B}^{\frac{d}{p}-1}_{p,1})}\nonumber\\
\label{Eq:3-49}
&&+\left\|\tau \nabla a\right\|^{h}_{\tilde{L}^{\infty}_{t}(\dot{B}^{\frac{d}{p}-1}_{p,1})}.
\end{eqnarray}
According the bounds in Theorem \ref{Thm1.1}, we have
$$\left\|(u,H)\right\|^{h}_{L^{1}_{t}(\dot{B}^{\frac{d}{p}+1}_{p,1})}\lesssim \mathcal{E}_{p,0}. $$
Note that $\alpha>1$, it following from \eqref{Eq:3-46} that
$$\left\|\tau\nabla a\right\|^{h}_{\tilde{L}^{\infty}_{t}(\dot{B}^{\frac{d}{p}-1}_{p,1})}\leq
\left\|\langle \tau\rangle^{\alpha}  \nabla a\right\|^{h}_{\tilde{L}^{\infty}_{t}(\dot{B}^{\frac{d}{p}-1}_{p,1})}
\lesssim \left\|\left(\nabla a_{0},u_{0},H_{0}\right)\right\|^{h}_{\dot{B}^{\frac{d}{p}-1}_{p,1}}+\mathcal{E}^{2}_{p}(t)+\mathcal{D}^{2}_{p}(t).$$
Next, we bound the nonlinear terms with $g$ and $m$. In fact, the first four terms of $g$ have been dealt with in \cite{DX}:
\begin{eqnarray*}
&&\left\|\tau\, \pi_{2}(a)\nabla a\right\|^{h}_{\tilde{L}^{\infty}_{t}(\dot{B}^{\frac{d}{p}-1}_{p,1})}\lesssim \mathcal{D}^{2}_{p}(t),\\
&&\left\|\tau \,u\cdot \nabla u \right\|^{h}_{\tilde{L}^{\infty}_{t}(\dot{B}^{\frac{d}{p}-1}_{p,1})}
+\left\|\tau\, \pi_{1}(a)\mathcal{A}u\right\|^{h}_{\tilde{L}^{\infty}_{t}(\dot{B}^{\frac{d}{p}-1}_{p,1})}
\lesssim \mathcal{E}_{p}(t)\mathcal{D}_{p}(t),\\
&&\left\|\frac {\tau}{1+a}\mathrm{div}\left(2\widetilde{\mu }(a)\,D(u)+\widetilde{\lambda }(a)\,\mathrm{div}\,u\, \mathrm{Id}\right)\right\|^{h}_{\tilde{L}^{\infty}_{t}(\dot{B}^{\frac{d}{p}-1}_{p,1})}
\lesssim \mathcal{E}_{p}(t)\mathcal{D}_{p}(t).
\end{eqnarray*}
Product and compositions estimates (see Propositions \ref{Prop2.2} and \ref{Prop2.4}) adapted to title spaces give
\begin{eqnarray*}
\hspace{-6mm}\left\|\tau\,\pi_{1}(a)\left(\nabla\left(I\cdot H\right)-I\cdot\nabla H\right)\right\|^{h}_{\tilde{L}^{\infty}_{t}(\dot{B}^{\frac{d}{p}-1}_{p,1})}
&\lesssim& \left\|a\right\|_{\tilde{L}^{\infty}_{t}(\dot{B}^{\frac{d}{p}}_{p,1})}
\left\|\tau \nabla H\right\|_{\tilde{L}^{\infty}_{t}(\dot{B}^{\frac{d}{p}}_{p,1})}\\
&\lesssim&  \mathcal{E}_{p}(t) \mathcal{D}_{p}(t),\\
\left\|\frac {\tau}{1+a}\left(\left(\nabla H\cdot H\right)^{T}-H\cdot\nabla H\right)\right\|^{h}_{\tilde{L}^{\infty}_{t}(\dot{B}^{\frac{d}{p}-1}_{p,1})}
&\lesssim& \left\|H\right\|_{\tilde{L}^{\infty}_{t}(\dot{B}^{\frac{d}{p}-1}_{p,1})}
\left\|\tau \nabla H\right\|_{\tilde{L}^{\infty}_{t}(\dot{B}^{\frac{d}{p}}_{p,1})}\\
&\lesssim&  \mathcal{E}_{p}(t) \mathcal{D}_{p}(t).
\end{eqnarray*}
Let us finally consider those terms in $m$. We conclude that thanks to Proposition \ref{Prop2.2}, the definition of $\mathcal{E}_{p}(t)$ and \eqref{Eq:3-44},
\begin{eqnarray*}
&&\hspace{-5mm}\left\|\tau\left(H\cdot\nabla u-H\left(\mathrm{div}\,u\right)\right)\right\|^{h}_{\tilde{L}^{\infty}_{t}(\dot{B}^{\frac{d}{p}-1}_{p,1})}
\lesssim \left\|H\right\|_{\tilde{L}^{\infty}_{t}(\dot{B}^{\frac{d}{p}-1}_{p,1})}
\left\|\tau \nabla u\right\|_{\tilde{L}^{\infty}_{t}(\dot{B}^{\frac{d}{p}}_{p,1})}
\lesssim \mathcal{E}_{p}(t) \mathcal{D}_{p}(t),\\
&&\hspace{-5mm}\left\|\tau \,u\cdot \nabla H\right\|^{h}_{\tilde{L}^{\infty}_{t}(\dot{B}^{\frac{d}{p}-1}_{p,1})}
\lesssim \left\|u\right\|_{\tilde{L}^{\infty}_{t}(\dot{B}^{\frac{d}{p}-1}_{p,1})}
\left\|\tau \nabla H\right\|_{\tilde{L}^{\infty}_{t}(\dot{B}^{\frac{d}{p}}_{p,1})}
\lesssim \mathcal{E}_{p}(t) \mathcal{D}_{p}(t).
\end{eqnarray*}
Hence, reverting to \eqref{Eq:3-49}, we end up with
\begin{equation} \label{Eq:3-50}
\left\|\tau \nabla\left(u,H\right)\right\|^{h}_{\tilde{L}^{\infty}_{t}(\dot{B}^{\frac{d}{p}}_{p,1})}
\lesssim \mathcal{E}_{p,0}+\mathcal{E}^{2}_{p}(t)+\mathcal{D}^{2}_{p}(t).
\end{equation}
 Therefore, adding up \eqref{Eq:3-50} to \eqref{Eq:3-30} and  \eqref{Eq:3-46} yields for all $T\geq0$,
\begin{equation*}
\mathcal{D}_{p}(T)\lesssim \mathcal{D}_{p,0}+\left\|\left(a_{0},u_{0},H_{0}\right)\right\|^{\ell}_{\dot{B}^{\frac{d}{2}-1}_{2,1}}
+\left\|\left(\nabla a_{0},u_{0},H_{0}\right)\right\|^{h}_{\dot{B}^{\frac{d}{p}-1}_{p,1}}+\mathcal{E}^{2}_{p}(T)+\mathcal{D}^{2}_{p}(T).
\end{equation*}
As Theorem \ref{Thm1.1} ensures that $\mathcal{E}_{p}\lesssim\mathcal{E}_{p,0}\ll1$ and as
$$\left\|\left(a_{0},u_{0},H_{0}\right)\right\|^{\ell}_{\dot{B}^{\frac{d}{2}-1}_{2,1}}\lesssim \left\|\left(a_{0},u_{0},H_{0}\right)\right\|^{\ell}_{\dot{B}^{-s_{0}}_{2,\infty}},$$
one can conclude that \eqref{Eq:1-8} is satisfied for all time if $\mathcal{D}_{p,0}$ and $\left\|(a_{0},u_{0},H_{0})\right\|^{h}_{\dot{B}^{\frac{d}{p}-1}_{p,1}}$ are small enough. This completes the proof of Theorem \ref{Thm1.2}.
\subsection{The proof of Corollary 1.1}
\begin{proof}
In comparison with those efforts in \cite{DX}, it suffices to bound $H$ as an example. Recall that for functions with compactly supported Fourier transform, one has the embedding $\dot{B}^{s}_{2,1}\hookrightarrow\dot{B}_{p,1}^{s-d\left(\frac{1}{2}-\frac{1}{p}\right)}\hookrightarrow\dot{B}^{s}_{p,1}$
for the low frequencies, whenever $p\geq2$. Hence, we get for all $-s_{0}<s\leq\frac{d}{p}-1$,
\begin{equation*}
\sup_{t\in[0,T]}\langle t\rangle^{\frac{s_{0}+s}{2}}\left\|\Lambda^{s}H\right\|_{\dot{B}^{0}_{p,1}}
\lesssim\left\|\langle t\rangle^{\frac{s_{0}+s}{2}}H\right\|^{\ell}_{L^{\infty}_{T}(\dot{B}^{s}_{2,1})}+\left\|\langle t\rangle^{\frac{s_{0}+s}{2}}H\right\|^{h}_{L^{\infty}_{T}(\dot{B}^{s}_{p,1})}.
\end{equation*}
Taking advantage of \eqref{Eq:1-8} and the definition of functional $\mathcal{D}_{p}$, we see that
\begin{equation*}
\left\|\langle t\rangle^{\frac{s_{0}+s}{2}}H\right\|^{\ell}_{L^{\infty}_{T}(\dot{B}^{s}_{2,1})}
\lesssim\mathcal{D}_{p,0}+\left\|\left(\nabla a_{0},u_{0},H_{0}\right)\right\|^{h}_{\dot{B}^{\frac{d}{p}-1}_{p,1}}  \text{ if } -s_{0}<s\leq\frac{d}{2}+1
\end{equation*}
and that, owing to the fact that $\alpha\geq\frac{s_{0}+s}{2}$ for all $s\leq\frac{d}{p}-1$ (if $\varepsilon$ is sufficiently small),
\begin{equation*}
\left\|\langle t\rangle^{\frac{s_{0}+s}{2}}H\right\|^{h}_{L^{\infty}_{T}(\dot{B}^{s}_{p,1})}
\lesssim\mathcal{D}_{p,0}+\left\|\left(\nabla a_{0},u_{0},H_{0}\right)\right\|^{h}_{\dot{B}^{\frac{d}{p}-1}_{p,1}}.
\end{equation*}
Hence, the proof of Corollary \ref{Cor1.1} is complete.
\end{proof}

Corollary \ref{Cor1.2} follows from Corollary \ref{Cor1.1} and Gagliardo-Nirenberg type inequalities directly, see \cite{DX} for the same details.

\end{document}